\crefname{hypothesis}{Hypothesis}{Hypotheses}
\title{Viscous regularization of the MHD equations\thanks{Submitted to the editors \today.
\funding{This research is funded by Swedish Research Council (VR) under Grant Number 2021-04620.}}}
\author{Tuan Anh Dao\thanks{Division of Scientific Computing, Department of Information Technology, Uppsala University, Sweden 
  (\email{tuananh.dao@it.uu.se}, \email{lukas.lundgren@math.su.se}, \email{murtazo.nazarov@it.uu.se}).}
\and Lukas Lundgren\footnotemark[2]
\and Murtazo Nazarov\footnotemark[2]}
\newcommand{\cred}[1]{{\color{black}#1}}
\newcommand{\cblue}[1]{{\color{black}#1}}
\newcommand{\cgreen}[1]{{\color{black}#1}} 
\DeclareMathSymbol{:}{\mathord}{operators}{"3A}
\begin{document}

\maketitle



\begin{abstract}
Nonlinear conservation laws such as the system of ideal magnetohydrodynamics (MHD) equations may develop singularities over time. In these situations, viscous regularization is a common approach to regain regularity of the solution. In this paper, we present a new viscous flux to regularize the MHD equations which holds many attractive properties. In particular, we prove that the proposed viscous flux preserves positivity of density and internal energy, satisfies the minimum entropy principle, is consistent with all generalized entropies, and is Galilean and rotationally invariant. We also provide a variation of the viscous flux that conserves angular momentum. To make the analysis more useful for numerical schemes, the divergence of the magnetic field is not assumed to be zero. Using continuous finite elements, we show several numerical experiments including contact waves and magnetic reconnection.
\end{abstract}

\begin{keywords}
MHD, viscous regularization, artificial viscosity, entropy principles
\end{keywords}

\begin{MSCcodes}
34A45
\end{MSCcodes}

\section{Introduction}\label{Sec:Introduction}
Dynamics of conducting fluids, such as plasma, when viscosity and magnetic resistivity are neglected can be numerically studied by solving the equations of ideal MHD. Besides accuracy and stability, preserving physical features of numerical solutions is very desirable and sometimes is even essential. For instance, the laws of physics break down when fluid density or internal energy becomes negative. Many difficulties in avoiding such situations come from the complex nonlinearity of the advective MHD flux \cite{Torrilhon_2003}. A common solution is to regularize the equations by adding a vanishing viscous term, see e.g., \cite{Harten_1998,Mabuza_2020,Dao2022a}. For that purpose, a straightforward choice of the viscous regularization would be taking the resistive model of the MHD equations. However, the resistive MHD flux suffers from serious downsides. \cred{For example, unless the thermal diffusivity is zero, the resistive MHD flux is incompatible with the minimum entropy principle \cite{Guermond_Popov_2014}}.

One way to overcome these issues is to, instead, consider a monolithic regularization to the MHD equations \cite{Dao2022a}. The limitation with the monolithic flux includes that it is incompatible with the divergence-free nature of the magnetic field, $\DIV \bB = 0$; it does not conserve angular momentum; and it is too simplified since an equal amount of viscosity is added to all conserved variables. Therefore, we propose a more general and improved viscous regularization for the ideal MHD equations. Our idea is to combine the Guermond-Popov (GP) viscous flux \cite{Guermond_Popov_2014} for compressible flows with the resistive flux for the magnetic component of the MHD equations. In contrast to the monolithic flux, the GP flux can be physically motivated. On the other hand, the resistive flux for the electromagnetic part is suitable for many numerical schemes with mechanisms to guarantee $\DIV \bB = 0$. Our main focus is to carefully investigate positivity, conservation properties, entropy principles, and Galilean and rotational invariance of the new viscous regularization at the PDE level.




One of the conserved quantities we investigate more closely is angular momentum which is believed by physicists to be an important conserved property of fluid models. It is known that viscous regularizations of compressible flow can cause the model to lose conservation of angular momentum due to added diffusion of mass \cite{ottinger2009,Svard2018}. Additionally, since many numerical methods include so-called artificial mass diffusivity as stabilization, it is challenging to find numerical approximations that conserve angular momentum. In this work, we show that the GP flux \cite{Guermond_Popov_2014} does not conserve angular momentum and our proposed viscous flux conserves angular momentum while also satisfying entropy principles.





When $\DIV\bB\neq 0$, different techniques are needed when solving the MHD equations. To maintain thermodynamic consistency, a nonconservative source term proportional to the divergence is usually added, for example the Powell term \cite{Powell_et_al_1999}. Unless a specialized mechanism is built into the scheme, it is often necessary to further clean the divergence using, \eg the generalized Lagrange multiplier (GLM) \cite{Dedner_et_al_2002}. Whether such additions, \eg of the GLM or the Powell term, take away the good properties of the viscous regularization is also investigated in this work.




All of our findings are summarized in Tables~\ref{table:conclusion_viscous_model}, \ref{table:conclusion_divergence} and \ref{table:conclusion_GLM}. Overall, the theory and the numerical experiments suggest that the proposed viscous fluxes are suitable for artificial viscosity methods as well as \cred{being a viable candidate} for modeling fluid viscosity and magnetic resistivity.


The remaining sections are organized as follows. In Section~\ref{Sec:Equation}, we describe the ideal MHD equations and the nonconservative divergence source terms. The proposed viscous flux and the main analysis of positivity and entropy principles are presented in Section~\ref{sec:GP_flux}. Different conservativeness properties are discussed in Section~\ref{sec:conservativeness} where a slightly modified viscous flux is presented which preserves angular momentum. Several relevant numerical results are shown in Section~\ref{sec:numerical_results}. Section~\ref{sec:summary} is the conclusion.


\section{The ideal MHD equations}\label{Sec:Equation}
Consider the spatial domain $\mR^d, d=1,2,3$ and a temporal domain $[0,T]\subset\mR$. We define a vector of conserved quantities $\bsfU\coloneqq (\rho, \bbm^\top, E, \bB^\top)^\top$, where $\rho(\bx,t)\,:\, \mR^d\times[0,T]\rightarrow\mR$ is the \textit{density}, $\bbm(\bx,t)\,:\, \mR^d\times[0,T]\rightarrow\mR^d$ is the \textit{momentum}, $E(\bx,t)\,:\,\mR^d\times[0,T]\rightarrow\mR$ is the \textit{total energy}, and $\bB(\bx,t)\,:\, \mR^d\times[0,T]\rightarrow\mR^d$ is the \textit{magnetic field}. The system of MHD equations reads
\begin{equation}\label{eq:mhd1}
\p_t \bsfU + \DIV \bsfF_{\calE}(\bsfU) + \DIV \bsfF_{\calB}(\bsfU) = 
0,
\end{equation}
where the nonlinear advective fluxes $\bsfF_{\calE}, \bsfF_{\calB}$ are defined as
\begin{equation}\label{eq:mhd2}
  \bsfF_{\calE}\coloneqq
  \begin{pmatrix}
  \bbm \\
  \bbm \otimes \bu + p\polI \\
  \bu (E+p) \\
  0
  \end{pmatrix},
  \quad
  \bsfF_{\calB}\coloneqq
  \begin{pmatrix}
  0 \\
  -\bbetaa\\
  -\bbetaa\bu\\
  \bu \otimes \bB - \bB \otimes \bu \\
    \end{pmatrix}.
\end{equation}
The symmetric term $\bbetaa\coloneqq -\frac{1}{2} (\bB\SCAL\bB)\polI+\bB\otimes\bB $ is the \textit{Maxwell stress tensor}. Aligned with \cite{Guermond_Popov_2014}, the \textit{pressure} $p$ is not assumed to be positive. We respectively refer to the four conservation equations in \eqref{eq:mhd1} as: the \textit{mass equation}, the \textit{momentum equation}, the \textit{total energy equation}, and the \textit{magnetic equation}. The specific internal energy $e$ is defined as
\begin{equation}\label{eq:e}
  e \coloneqq \rho^{-1}E-\frac12|\bu|^2-\frac12\rho^{-1}|\bB|^2.
\end{equation}
We consider a general equation of state,
\begin{equation}\label{eq:equation_of_state}
p s_e + \rho^2 s_{\rho} = 0,
\end{equation}
where $s$ is the \textit{specific entropy}, $s_e\coloneqq\p_es, s_{\rho}\coloneqq\p_{\rho}s$. The \textit{temperature} $T$ is defined as $T\coloneqq s_e^{-1}$. In this work, we assume that $-s$ is a strictly convex function with respect to $\rho^{-1}$ and $e$, and $T$ is positive. The ideal MHD equations can be regularized by adding a parabolic term $\DIV \bsfF_{\calV}(\bsfU)$ to the right hand side of \eqref{eq:mhd1},
\begin{equation}\label{eq:mhd_viscous}
  \p_t \bsfU + \DIV \bsfF_{\calE}(\bsfU) + \DIV \bsfF_{\calB}(\bsfU) = \DIV \bsfF_{\calV}(\bsfU).
\end{equation}
For example, one can simply choose $\bsfF_{\calV}(\bsfU)$ to be the \textit{monolithic parabolic flux} $\bsfF_{\calV}^{\text{m}}(\bsfU)$ $\coloneqq$ $\epsilon\GRAD\bsfU$ where $\epsilon$ is a positive vanishing coefficient. A continuous analysis of $\bsfF_{\calV}^{\text{m}}(\bsfU)$ has been done in \cite{Dao2022a}.

\subsection{The nonconservative divergence source term}\label{sec:div_source_term}
An important physical property of the magnetic field is that the divergence of it is pointwise zero. However, without any special consideration, numerical solutions to \eqref{eq:mhd1} do not satisfy this property exactly. In that case, several physical properties related to the magnetic field are violated, and numerical methods may converge to wrong solutions, see e.g., \cite{Brackbill_Barnes_1980}. When $\DIV\bB\neq 0$ is unavoidable, ensuring thermodynamic consistency often involves adding a nonconservative term in the form of,
\begin{equation}\label{eq:source_term}
\Psi(\alpha_{\bbm},\alpha_E,\alpha_{\bB}) \coloneqq \begin{pmatrix}
  0\\
  \alpha_{\bbm}\bB\\
  \alpha_{E}\bu\SCAL\bB\\
  \alpha_{\bB}\bu
\end{pmatrix}(\DIV\bB),
\end{equation}
where $\alpha_{\bbm},\alpha_E,\alpha_{\bB}$ are constants. Popular divergence source terms are $\Psi_{\text{Powell}}$ $\coloneqq$ $\Psi(-1,-1,-1)$ \cite{Powell_et_al_1999}, $\Psi_{\text{Janhunen}}$ $\coloneqq$ $\Psi(0,0,-1)$ \cite{Janhunen_2000}, and $\Psi_{\text{BB}}$ $\coloneqq$ $\Psi(-1,0,0)$ \cite{Brackbill_Barnes_1980}. With the Powell source term being the most popular choice, symmetrization \cite{Godunov_1972}, consistency with the Lorentz force, and entropy principles of the MHD equations are recovered when $\DIV\bB\neq0$.
For the rest of the paper, we do not assume that $\DIV\bB = 0$.

\section{A Guermond-Popov (GP) type viscous flux}\label{sec:GP_flux}
Our idea is to regularize the hydrodynamic part of \eqref{eq:mhd1} following the work of \cite{Guermond_Popov_2014} and electromagnetic part using the resistive MHD flux, see e.g., \cite{Rueda-Ramirez_2021}. We propose the viscous flux $\bsfF_{\calV}(\bsfU)$ in \eqref{eq:mhd_viscous} to be $\bsfF_{\calV}^{\text{GP}}(\bsfU)$ defined as
\begin{equation}\label{eq:GP_mhd_flux}
\bsfF_{\calV}^{\text{GP}}(\bsfU)\coloneqq
\begin{pmatrix}
\kappa\nabla\rho \\
\mu\rho\GRAD^s\bu+(\kappa\nabla\rho)\otimes\bu \\
\kappa\GRAD(\rho e) + \frac{|\bu|^2}{2}\kappa\nabla\rho + \mu\rho (\GRAD^s\bu) \bu + \eta \big( \GRAD \bB - \GRAD \bB^\top \big) \bB\\
\eta \big( \GRAD \bB - \GRAD \bB^\top \big)
\end{pmatrix},
\end{equation}
where $\kappa > 0, \mu > 0, \eta > 0$ are viscosity coefficients, and $\GRAD^s \bu \coloneqq \frac{1}{2} \l( \GRAD \bu + (\GRAD \bu)^\top \r) $. The resulting GP-MHD system reads
\begin{equation}\label{eq:mhd_GP_Powell}
\p_t \bsfU + \DIV \bsfF_{\calE}(\bsfU) + \DIV \bsfF_{\calB}(\bsfU) = \DIV \bsfF_{\calV}^{\text{GP}}(\bsfU) + \Psi.
\end{equation}
This section analyzes several physically relevant properties of the viscosity regularized system \eqref{eq:mhd_GP_Powell}.

\subsection{Positivity of density}

\begin{theorem}[Positivity of density]\label{thm:positivity_density}
  Assume sufficient smoothness of $\kappa$, $\rho$, $\bu$, and boundedness of $\bu$, $\DIV\bu$, $\kappa\GRAD\rho$, $\p_t\rho+\DIV(\rho\bu)$. With the addition of the viscous term $\DIV(\kappa\nabla\rho)$ to the mass equation, positivity of density is guaranteed at the continuous level, that is
  \[
  \rho(\bx, t) > 0, \text{ for } \bx \in \mR^d, t > 0.
  \]
\end{theorem}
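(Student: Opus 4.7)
The plan is to expand the regularized mass equation into a linear, uniformly parabolic PDE for $\rho$ and then invoke the parabolic minimum principle. The viscous term $\DIV(\kappa\nabla\rho)$ supplies the elliptic structure that the hyperbolic mass equation lacked, opening the door to a comparison argument that is simply unavailable at $\kappa=0$.

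First, I would expand $\DIV(\kappa\nabla\rho) = \kappa\Delta\rho + \nabla\kappa\SCAL\nabla\rho$ and rewrite the mass equation as
\[
\p_t\rho - \kappa\Delta\rho + (\bu - \nabla\kappa)\SCAL\nabla\rho + (\DIV\bu)\,\rho = 0.
\]
The smoothness and boundedness hypotheses guarantee that the drift $\bu-\nabla\kappa$ and the zero-order coefficient $\DIV\bu$ are bounded, while $\kappa>0$ provides uniform parabolicity. The hypothesis that $\kappa\GRAD\rho$ is bounded is convenient for localization on the unbounded spatial domain.

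Because $\DIV\bu$ can take either sign, I would next use the standard exponential substitution $v \coloneqq e^{-\lambda t}\rho$ with $\lambda > \|\DIV\bu\|_{\infty}$, after which a direct computation shows that $v$ satisfies
\[
\p_t v - \kappa\Delta v + (\bu - \nabla\kappa)\SCAL\nabla v + (\lambda + \DIV\bu)\,v = 0,
\]
in which the zero-order coefficient is strictly positive. Arguing by contradiction, suppose $t_* > 0$ is the first time at which $\rho$, and hence $v$, reaches zero at some point $\bx_*$. At the space-time minimum $(\bx_*,t_*)$, calculus gives $\nabla v = 0$, $\Delta v \geq 0$, $\p_t v \leq 0$, and $v = 0$. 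Substituting into the PDE shows that every term on the left is non-positive while the right-hand side is zero, forcing $\p_t v = 0$ and $\Delta v = 0$. The strong parabolic minimum principle then forces $v$ to be constant on the parabolic cylinder, contradicting the positivity of the initial density $\rho_0$.

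The main obstacle, in my view, is making this minimum argument fully rigorous on the unbounded spatial domain $\mR^d$, where the infimum of $v$ need not be attained. I would handle it either by truncating to a large ball and using the assumed boundedness of $\kappa\GRAD\rho$ to control boundary contributions as the ball grows, or by invoking the version of the strong minimum principle for linear parabolic equations with bounded coefficients directly on $\mR^d$. Using the \emph{strong} rather than the weak principle is essential, since the weak version only produces $\rho \geq 0$, and the theorem demands the strict inequality $\rho > 0$.
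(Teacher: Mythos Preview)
Your argument via the parabolic minimum principle is sound and is the standard route to this result. Note, however, that the paper does not actually prove Theorem~\ref{thm:positivity_density} in-text: it simply observes that the regularized mass equation coincides with the one analyzed by Guermond and Popov for the Euler equations and defers entirely to that reference. Your expansion of the mass equation into a linear uniformly parabolic PDE with bounded coefficients, the exponential substitution $v=e^{-\lambda t}\rho$ to force a nonnegative zero-order term, and the appeal to the strong minimum principle together constitute exactly the kind of argument one expects behind that citation; your discussion of the unbounded-domain issue is also on point. One small caveat: you infer boundedness of $\nabla\kappa$ from smoothness alone, which does not follow on $\mR^d$; but the theorem's phrase ``sufficient smoothness'' is deliberately elastic, and the paper leans on the same loose standing assumptions.
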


Since the resulting mass equation coincides with the Euler mass equation regularized by the GP flux, see \cite{Guermond_Popov_2014} for the proof of Theorem~\ref{thm:positivity_density}. Positivity of density plays a fundamental role in ensuring other thermodynamic properties of \eqref{eq:mhd_GP_Powell}. By specific choices of the vanishing coefficient $\kappa$, it can be shown that the regularized mass equation is a continuous analog of the Lax-Friedrichs scheme or the upwind scheme solving the unregularized mass equation, see e.g., \cite[Section 3]{Dao2022a}. In Section~\ref{sec:contact_line}, we numerically demonstrate the importance of having artificial mass diffusion to obtain physically correct solutions.

\subsection{Minimum entropy principle}
 We introduce the following notations,
\[
\begin{aligned}
  \bbf & = \kappa\GRAD\rho, &&& \bl & = \kappa\GRAD(\rho e),\\
  \polG &= \mu\rho\GRAD^s\bu, &&& \bh & = \bl-\frac{|\bu|^2}{2}\bbf,\\
  \polg & = \polG+\bbf\otimes\bu, &&& \polk & = \eta \big( \GRAD \bB - \GRAD \bB^\top \big).
\end{aligned}
\]
The viscous flux \eqref{eq:GP_mhd_flux} can be conveniently written as
\begin{equation}\label{eq:GP_mhd_flux_2}
\bsfF_{\calV}^{\text{GP}}(\bsfU)\coloneqq
\begin{pmatrix}
\bbf \\
\polg \\
\bh + \polg\SCAL\bu+\polk\SCAL\bB\\
\polk
\end{pmatrix}.
\end{equation}
An equation describing the evolution of the specific entropy is derived next.
\begin{lemma}\label{lemma:min_entropy_specific}
The specific entropy $s$ for the GP-MHD system \eqref{eq:mhd_GP_Powell} satisfies
\[
\rho (\p_t s + \bu \SCAL \nabla s) - \DIV (\rho \kappa \nabla s) - \bbf\SCAL\nabla(es_e-\rho s_{\rho}) + \bl\SCAL\nabla s_e-s_e(\polG:\GRAD\bu+\polk:\GRAD\bB) = 0
\]
given that $\alpha_E-\alpha_{\bbm}-\alpha_{\bB}=1$.
\end{lemma}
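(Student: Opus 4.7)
The plan is to derive the entropy equation by standard thermodynamic manipulation: obtain evolutions of $\rho$ and $e$ individually, combine them via the chain rule
$\rho(\p_t s + \bu\SCAL\nabla s) = \rho s_\rho(\p_t\rho + \bu\SCAL\nabla\rho) + \rho s_e(\p_t e + \bu\SCAL\nabla e),$
invoke the equation of state $p s_e + \rho^2 s_\rho = 0$ to cancel the pressure work, and reorganize the remaining viscous terms to match the claim.

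The mass equation of \eqref{eq:mhd_GP_Powell} immediately yields $D_t\rho\coloneqq\p_t\rho + \bu\SCAL\nabla\rho = -\rho\DIV\bu + \DIV\bbf$; multiplied by $\rho s_\rho$ and processed through the equation of state this becomes $\rho s_\rho D_t\rho = p s_e\DIV\bu + \rho s_\rho\DIV\bbf$. An equation for $\rho D_t e$ is then obtained by subtracting the kinetic and magnetic energy balances from the total energy equation: the kinetic balance comes from dotting the momentum equation with $\bu$ (with the mass equation used to rewrite $\p_t(\tfrac12\rho|\bu|^2) = \bu\SCAL\p_t\bbm - \tfrac12|\bu|^2\p_t\rho$), while the magnetic balance comes from dotting the magnetic equation with $\bB$. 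The viscous pieces are handled via $\DIV(\polG\bu) = (\DIV\polG)\SCAL\bu + \polG:\GRAD\bu$ and $\DIV(\polk\bB) = (\DIV\polk)\SCAL\bB + \polk:\GRAD\bB$, combined with $\polg\bu = \polG\bu + |\bu|^2\bbf$ and $\bh = \bl - \tfrac12|\bu|^2\bbf$; all the $|\bu|^2$ contributions cancel and one arrives at
$\rho D_t e = -p\DIV\bu + \DIV\bl - e\DIV\bbf + \polG:\GRAD\bu + \polk:\GRAD\bB + \mathcal{S},$
where $\mathcal{S}$ collects every residual contribution proportional to $\DIV\bB$.

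The hypothesis $\alpha_E - \alpha_\bbm - \alpha_\bB = 1$ enters precisely here. Tracking the coefficient of $(\bu\SCAL\bB)\DIV\bB$ in each budget, one finds $\alpha_E+1$ on the right-hand side of the total-energy equation (with $+1$ coming from $\DIV(-\bbetaa\bu)$), $\alpha_\bbm+1$ on the right-hand side of the kinetic one (with $+1$ from $\bu\SCAL\DIV(-\bbetaa)$), and $\alpha_\bB+1$ on the right-hand side of the magnetic one (with $+1$ from $\bB\SCAL\DIV(\bu\otimes\bB-\bB\otimes\bu)$). The internal $=$ total $-$ kinetic $-$ magnetic combination then gives $\mathcal{S} = (\alpha_E - \alpha_\bbm - \alpha_\bB - 1)(\bu\SCAL\bB)\DIV\bB$, which vanishes exactly under the stated hypothesis. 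Multiplying the resulting internal-energy equation by $s_e$ and adding $\rho s_\rho D_t\rho$ eliminates the pressure work and produces $\rho D_t s$ on the left; using $\bl = e\bbf + \rho\kappa\nabla e$ together with $\nabla s = s_\rho\nabla\rho + s_e\nabla e$, the combination $s_\rho\DIV\bbf + s_e(\DIV\bl - e\DIV\bbf)$ rearranges into $\DIV(\rho\kappa\nabla s) + \bbf\SCAL\nabla(e s_e - \rho s_\rho) - \bl\SCAL\nabla s_e$, giving the claimed identity.

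The main obstacle I foresee is this final reorganization: showing that the patchwork of $s_\rho\bbf$, $s_e\bl$, and $s_e e\bbf$ contributions reassembles cleanly into the nonlinear diffusion $\DIV(\rho\kappa\nabla s)$ together with the two first-order correction terms rests on applying the product rule to $s_\rho$ and $s_e$ viewed as composite functions of $(\rho,e)$ and on a bit of algebra with $\bbf = \kappa\nabla\rho$. By comparison, the energy-budget subtraction and the verification that $\alpha_E - \alpha_\bbm - \alpha_\bB = 1$ is exactly the cancellation condition are routine once each $(\bu\SCAL\bB)\DIV\bB$ contribution is attributed with its correct sign.
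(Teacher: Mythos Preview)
Your proposal is correct and follows essentially the same approach as the paper: derive the internal-energy equation by subtracting $\bu\SCAL(\text{momentum})$ and $\bB\SCAL(\text{magnetic})$ from the energy equation, identify $\alpha_E-\alpha_\bbm-\alpha_\bB=1$ as the exact cancellation condition for the residual $(\bu\SCAL\bB)\DIV\bB$ term, then multiply by $s_e$, add $\rho s_\rho D_t\rho$, and reorganize the viscous terms. The only substantive presentational difference is that the paper states the key rearrangement identity $s_e\bl-(es_e-\rho s_\rho)\bbf=\rho\kappa\nabla s$ up front (their equation for $\bl$), whereas you reach it via $\bl=e\bbf+\rho\kappa\nabla e$ and the product rule; these are the same computation. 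One minor slip: in your final paragraph the combination should read $\rho s_\rho\DIV\bbf + s_e(\DIV\bl - e\DIV\bbf)$, not $s_\rho\DIV\bbf$, since $\rho s_\rho D_t\rho$ contributes $\rho s_\rho\DIV\bbf$; with that factor of $\rho$ restored the rearrangement into $\DIV(\rho\kappa\nabla s)+\bbf\SCAL\nabla(es_e-\rho s_\rho)-\bl\SCAL\nabla s_e$ goes through exactly as you describe.
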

\begin{proof}
The following identity can be derived from the definitions of $\bl, \bbf$ and the chain rule $\GRAD s = s_\rho\GRAD\rho+s_e\GRAD e$, see \cite{Dao2022a},
\begin{equation}\label{eq:min_entropy_specific_l}
\bl=s_{e}^{-1}\left(e s_{e}-\rho s_{\rho}\right)\bbf+\kappa\rho s_{e}^{-1} \nabla s.
\end{equation}
We rewrite the momentum equation as
\[
\rho(\p_t\bu+\bu\SCAL\GRAD\bu) + \bu \DIV\bbf + \GRAD p - \DIV\bbetaa - \DIV(\mu\rho\GRAD^s\bu+(\kappa\GRAD\rho)\otimes\bu) = \alpha_\bbm \bB(\DIV\bB).
\]
Multiplying the momentum equation with $\bu$, the magnetic equation with $\bB$, then subtracting them from the energy equation gives
\begin{equation}\label{eq:min_entropy_e}
\rho(\p_te+\bu\SCAL\nabla e) +\left(e-\frac{1}{2}|\bu|^2\right)\DIV\bbf+p(\DIV\bu)+(\bB\SCAL\bu)(\DIV\bB) + \Upsilon_{e,V} = \Upsilon_{e,P},
\end{equation}
where the contribution of the viscous flux is
\begin{align*}
\Upsilon_{e,V} & =
-\DIV(\bh+\polg\SCAL\bu+\polk\SCAL\bB)+(\DIV\polg)\SCAL\bu+(\DIV\polk)\SCAL\bB\\
& =-\DIV\bh - \polg:\GRAD\bu - \polk:\GRAD\bB,
\end{align*}
and the contribution of the Powell term is
\[
\Upsilon_{e,P}=
(\alpha_E-\alpha_{\bbm}-\alpha_{\bB})(\bu\SCAL\bB)(\DIV\bB).
\]
If $\alpha_E-\alpha_{\bbm}-\alpha_{\bB}=1$, $\Upsilon_{e,P}$ is cancelled out by $(\bB\SCAL\bu)(\DIV\bB)$ from the left hand side of \eqref{eq:min_entropy_e}. Utilizing the chain rule $\p_\alpha s = s_\rho\p_\alpha\rho+s_e\p_\alpha e, \alpha\in\{t,\bx\}$, we multiply the density equation with $\rho s_\rho$, \eqref{eq:min_entropy_e} with $s_e$ to derive the following entropy conservation equation,
\begin{align*}
 & \rho\p_t s + \rho\bu\SCAL\GRAD s + \rho^2 s_\rho(\DIV\bu)+s_e(e-\frac12|\bu|^2)\DIV\bbf \\
 & +ps_e(\DIV\bu)-s_e(\DIV\bh+\polg:\GRAD\bu+\polk:\GRAD\bB)-\rho s_\rho\DIV\bbf = 0.
\end{align*}
Due to the equation of state \eqref{eq:equation_of_state}, the terms associating with $\DIV\bu$ are cancelled,
\[
\rho(\p_ts+\bu\SCAL\GRAD s) + (es_e-\rho s_\rho)\DIV\bbf-s_e\DIV\bh-s_e\polg:\GRAD\bu-s_e\polk:\GRAD\bB-s_e\frac{1}{2}|\bu|^2\DIV\bbf=0.
\]
We rewrite the equation by applying a product rule on $-s_e\frac{1}{2}|\bu|^2\DIV\bbf$,
\begin{align*}
\rho(\p_ts+\bu\SCAL\GRAD s) + (es_e-\rho s_\rho)\DIV\bbf-s_e\DIV\left(\bh+\frac12|\bu|^2\bbf\right) &\\
-s_e(\polg:\GRAD\bu-(\bbf\otimes\bu):\GRAD\bu+\polk:\GRAD\bB) &= 0.
\end{align*}
Because $\polg-(\bbf\otimes\bu) = \polG$, we end up with
\[
\rho(\p_ts+\bu\SCAL\GRAD s) + (es_e-\rho s_\rho)\DIV\bbf-s_e\DIV\left(\bh+\frac12|\bu|^2\bbf\right)
-s_e(\polG:\GRAD\bu+\polk:\GRAD\bB) = 0.
\]
Applying a product rule on $(es_e-\rho s_\rho)\DIV\bbf$ and using \eqref{eq:min_entropy_specific_l}, we obtain the equality of the statement of the lemma.
\end{proof}
\begin{lemma}\label{lemma:min_entropy_quadratic_form}
The quadratic form
\[
J_1(\GRAD\rho,\GRAD e) \coloneqq -\bbf\SCAL\nabla(es_e-\rho s_{\rho}) + \bl\SCAL\GRAD s_e + \kappa\GRAD\rho\SCAL\GRAD s
\]
is negative definite.
\end{lemma}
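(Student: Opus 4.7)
}
My plan is to reduce $J_1$ to an explicit quadratic form in the pair $(\GRAD\rho,\GRAD e)$ and then to identify the coefficient matrix, up to the positive factor $\kappa$, with a rescaled Hessian of $-s$ viewed as a function of $(\rho^{-1},e)$, whose positive definiteness is given by hypothesis.

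First I would substitute the definitions $\bbf = \kappa\GRAD\rho$ and $\bl = \kappa\GRAD(\rho e) = \kappa(e\GRAD\rho+\rho\GRAD e)$ into $J_1$, and expand $\GRAD s = s_\rho\GRAD\rho + s_e\GRAD e$, $\GRAD s_e = s_{e\rho}\GRAD\rho + s_{ee}\GRAD e$, and $\GRAD(es_e-\rho s_\rho) = (es_{e\rho}-s_\rho-\rho s_{\rho\rho})\GRAD\rho + (s_e + e s_{ee}-\rho s_{\rho e})\GRAD e$ via the chain rule and the product rule. Collecting the resulting coefficients of $|\GRAD\rho|^2$, $\GRAD\rho\SCAL\GRAD e$, and $|\GRAD e|^2$, I expect the contributions from $-\bbf\SCAL\GRAD(es_e-\rho s_\rho)$ and from $\bl\SCAL\GRAD s_e$ to cancel the mixed $e s_{e\rho}, e s_{ee}$ terms, so that the form simplifies to
\begin{equation*}
J_1 = \kappa\,\bigl[(2 s_\rho+\rho s_{\rho\rho})\,|\GRAD\rho|^2 + 2\rho s_{\rho e}\,(\GRAD\rho\SCAL\GRAD e) + \rho s_{ee}\,|\GRAD e|^2\bigr].
\end{equation*}

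Next, I would interpret this expression componentwise: for each coordinate $i$, the contribution is $\kappa\,(\partial_i\rho,\partial_i e)\,M\,(\partial_i\rho,\partial_i e)^\top$ with
\begin{equation*}
M = \begin{pmatrix} 2s_\rho + \rho s_{\rho\rho} & \rho s_{\rho e} \\ \rho s_{\rho e} & \rho s_{ee} \end{pmatrix}.
\end{equation*}
It then suffices to show that $M$ is negative definite. To do so, I would introduce the specific volume $v = \rho^{-1}$ and $\tilde{s}(v,e) \coloneqq s(v^{-1},e)$. A direct computation gives the Hessian
\begin{equation*}
H_{\tilde s} = \begin{pmatrix} 2\rho^3 s_\rho + \rho^4 s_{\rho\rho} & -\rho^2 s_{\rho e} \\ -\rho^2 s_{\rho e} & s_{ee} \end{pmatrix}.
\end{equation*}
The assumed strict convexity of $-s$ with respect to $(\rho^{-1},e)$ means $-H_{\tilde s}$ is positive definite. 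Reading off its principal minors yields $2s_\rho + \rho s_{\rho\rho} < 0$, $s_{ee}<0$, and $(2s_\rho+\rho s_{\rho\rho})s_{ee} - \rho s_{\rho e}^2 > 0$ after dividing by the positive factor $\rho^3$. The first two inequalities give that both diagonal entries of $M$ are negative, and the third shows that $\det M = \rho\bigl[(2s_\rho+\rho s_{\rho\rho})s_{ee} - \rho s_{\rho e}^2\bigr] > 0$. Hence $M$ is negative definite and the claim follows upon multiplying by $\kappa > 0$.

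The main obstacle is not conceptual but bookkeeping: correctly tracking sign changes in the chain-rule expansion so that the $e$-proportional terms drop out, and then relating the coefficient matrix $M$ (whose entries involve $s_\rho$ and $\rho$-derivatives in the original variables) to the Hessian $H_{\tilde s}$ in the variables in which convexity is actually assumed. The key algebraic identity behind this is simply that the change of variable $v=\rho^{-1}$ produces the factor $\rho^3$ needed to match the determinant of $M$, modulo $\rho>0$, to the determinant of $-H_{\tilde s}$.
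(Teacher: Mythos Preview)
Your proposal is correct and follows essentially the same route as the paper: expand $J_1$ via the chain rule into the quadratic form with coefficient matrix $M=\begin{pmatrix}2s_\rho+\rho s_{\rho\rho}&\rho s_{\rho e}\\ \rho s_{\rho e}&\rho s_{ee}\end{pmatrix}$ (the paper writes the $(1,1)$ entry as $\rho^{-1}\partial_\rho(\rho^2 s_\rho)$, which is the same thing), and then argue that $M$ is negative definite from the sign of its trace and determinant. The only difference is that where the paper defers the last step to an external appendix, you supply it directly by relating $M$ to the Hessian of $s$ in the variables $(\rho^{-1},e)$; this is precisely the argument underlying the cited reference.
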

\begin{proof}
Using the chain rule $\GRAD s = s_\rho\p_\alpha\rho+s_e\p_\alpha e$, we can rewrite $J_1$ as
\[
J_1 = 
\begin{pmatrix}
  \nabla\rho\\
  \nabla e
\end{pmatrix}
\left(
\begin{pmatrix}
\epsilon\rho^{-1}\p_{\rho}(\rho^2 s_{\rho}) & \epsilon\rho s_{\rho e} \\
\epsilon\rho s_{\rho e} & \epsilon\rho s_{ee}  
\end{pmatrix}
\otimes
\polI_d
\right)
\begin{pmatrix}
\nabla\rho & \nabla e
\end{pmatrix}.
\]
Due to the strict convexity of $-s$, it can be shown that the $2$-by-$2$ matrix
\[
\begin{pmatrix}
\epsilon\rho^{-1}\p_{\rho}(\rho^2 s_{\rho}) & \epsilon\rho s_{\rho e} \\
\epsilon\rho s_{\rho e} & \epsilon\rho s_{ee}  
\end{pmatrix}
\]
is negative definite because its determinant is positive and its trace is negative, see \citep[Appendix A]{Dao2022a}.
\end{proof}
\begin{theorem}[Minimum entropy principle]\label{thm:min_entropy}
Assume sufficient smoothness and that the density and the internal energy uniformly converge to stationary constant states $\rho^*, e^*$ outside of a compact set $\Omega\in\mR^d$. The GP-MHD system \eqref{eq:mhd_GP_Powell} exhibits a specific entropy function $s$ which satisfies
\begin{equation}\label{eq:min_entropy}
\inf_{\bx\in\mR^d}s(\bx,t) \geq \inf_{\bx\in\mR^d}s_0(\bx).
\end{equation}
\end{theorem}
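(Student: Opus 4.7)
The plan is to reduce the entropy balance of Lemma \ref{lemma:min_entropy_specific} to a linear drift-diffusion inequality on $s$ whose right-hand side is manifestly non-negative, and then apply the weak minimum principle on $\mathbb{R}^d$, with the assumed uniform decay to the constant state $s(\rho^*, e^*)$ outside $\Omega$ playing the role of a boundary condition.

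First, I would combine Lemmas \ref{lemma:min_entropy_specific} and \ref{lemma:min_entropy_quadratic_form}. Noting that $\kappa\GRAD\rho\SCAL\GRAD s = \bbf\SCAL\GRAD s$, the definition of $J_1$ rearranges to $-\bbf\SCAL\GRAD(es_e - \rho s_\rho) + \bl\SCAL\GRAD s_e = J_1 - \bbf\SCAL\GRAD s$, and substituting this identity into Lemma \ref{lemma:min_entropy_specific} collects the balance into
\[
\rho(\p_t s + \bu\SCAL\GRAD s) - \DIV(\rho\kappa\GRAD s) - \bbf\SCAL\GRAD s = -J_1 + s_e\bigl(\polG:\GRAD\bu + \polk:\GRAD\bB\bigr).
\]

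Second, I would verify that the right-hand side of the above identity is non-negative. Lemma \ref{lemma:min_entropy_quadratic_form} directly gives $-J_1 \geq 0$. Since $\polG$ is symmetric, $\polG:\GRAD\bu$ picks up only the symmetric part of $\GRAD\bu$ and equals $\mu\rho|\GRAD^s\bu|^2 \geq 0$; the mirror argument for the antisymmetric tensor $\polk$ yields $\polk:\GRAD\bB \geq 0$. With $s_e = 1/T > 0$ by assumption, the source is non-negative. I would then divide by $\rho > 0$ (Theorem \ref{thm:positivity_density}) and use the product rule $\rho^{-1}\DIV(\rho\kappa\GRAD s) = \DIV(\kappa\GRAD s) + \rho^{-1}\bbf\SCAL\GRAD s$ to absorb the cross term into the drift, producing the clean linear parabolic inequality
\[
\p_t s + (\bu - 2\rho^{-1}\bbf)\SCAL\GRAD s - \DIV(\kappa\GRAD s) \geq 0.
\]

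Third, I would invoke the weak minimum principle for a linear parabolic operator with bounded coefficients and elliptic diffusion $\kappa > 0$. Because $\rho$ and $e$, and hence $s$, converge uniformly to their stationary constants outside the compact set $\Omega$, the standard argument at a putative interior minimizer (where $\GRAD s = 0$, $\Delta s \geq 0$, and $\p_t s \leq 0$) rules out any strict decrease of $\inf s$, giving $\inf_{\bx}s(\bx,t)\geq\inf_{\bx}s_0(\bx)$.

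The hard part will be the algebraic consolidation in step one: confirming that the gradient products $\bl\SCAL\GRAD s_e$, $\bbf\SCAL\GRAD(es_e - \rho s_\rho)$, and the first-order residual created by expanding $\DIV(\rho\kappa\GRAD s)$ collapse exactly into $-J_1$ plus a single, harmless drift term that can be absorbed. Once that reduction is in place, the symmetric/antisymmetric sign analysis of the viscous contractions is immediate, and the minimum principle itself is classical, with the decay hypothesis at infinity substituting for a boundary condition on $\mathbb{R}^d$.
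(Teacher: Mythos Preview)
Your proposal is correct and follows essentially the same route as the paper: both combine Lemma~\ref{lemma:min_entropy_specific} with the definition of $J_1$ to reach the identity
\[
\rho(\partial_t s + \bu\SCAL\GRAD s) - \DIV(\rho\kappa\GRAD s) - \bbf\SCAL\GRAD s = -J_1 + s_e(\polG:\GRAD\bu + \polk:\GRAD\bB) \geq 0,
\]
establish non-negativity of the right-hand side via Lemma~\ref{lemma:min_entropy_quadratic_form} and the symmetric/antisymmetric contraction argument for $\polG$ and $\polk$, and then run the minimum-principle argument at an interior minimizer using the decay hypothesis outside $\Omega$. The only cosmetic difference is that you divide through by $\rho$ and absorb the residual first-order term into a modified drift $\bu - 2\rho^{-1}\bbf$ before invoking the minimum principle, whereas the paper keeps the $\rho$-weighted form and evaluates directly at the point $\bar\bx(t)$ where $\GRAD s = 0$ (so that $\DIV(\rho\kappa\GRAD s) = \rho\kappa\LAP s$ there); the substance is identical.
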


\begin{proof}
We first prove that $\polk:\nabla \bB \geq 0$. Using that the contraction between a symmetric and anti-symmetric matrix with zero diagonal is zero \citep[Ch 11.2.1]{Larson_2013}, one can show that
\begin{equation} \label{eq:resisitivity_sign}
  \eta\left(\nabla \bB - \nabla \bB^\top\right) : \nabla \bB  = \frac12\eta\left(\nabla \bB - \nabla \bB^\top\right) : \left(\nabla \bB - \nabla \bB^\top\right) \geq 0. 
\end{equation}
Isolating the signed terms in the equation of Lemma~\ref{lemma:min_entropy_specific} and using Lemma~\ref{lemma:min_entropy_quadratic_form} and \eqref{eq:resisitivity_sign}, we have
\begin{equation}\label{eq:entropy_equality}
\rho (\p_t s + \bu \SCAL \nabla s) - \DIV (\rho \kappa \nabla s) -\kappa\GRAD\rho\SCAL\GRAD s = -J_1 + s_e(\polG:\GRAD\bu+\polk:\GRAD\bB) \geq 0.
\end{equation}
At time $t > 0$, consider the case $\inf_{\bx\in\mR^d}s(\bx,t)$ is reached outside of $\Omega$. The result \eqref{eq:min_entropy} follows straightforwardly because
\[
\inf_{\bx\in\mR^d}s(\bx,t) = s^* \geq \inf_{\bx\in\mR^d}s_0(\bx).
\]
Otherwise, if $\inf_{\bx\in\mR^d}s(\bx,t)$ is reached at a point $\bar{\bx}(t)\in\Omega$, we have $\GRAD s(\bar{\bx}(t), t) = 0$ and $\LAP s(\bar{\bx}(t), t) \geq 0$ due to the smoothness assumption. At $\bar{\bx}(t)\in\Omega$, from \eqref{eq:entropy_equality}, we have
\[
\rho(\bar{\bx}(t), t)\p_t s(\bar{\bx}(t), t) - \kappa\rho(\bar{\bx}(t), t)\LAP s(\bar{\bx}(t), t) \geq 0.
\]
Therefore, $\p_t \inf_{\bx\in\mR^d}s(\bx, t) = \p_t s(\bar{\bx}(t), t) \geq 0$ because $\rho > 0$. This concludes that $\inf_{\bx\in\mR^d}s(\bx,t)$ does not decrease as the solution evolves in time. The proof is complete.
\end{proof}

\subsection{Positivity of internal energy}

\begin{theorem}[Positivity of internal energy]\label{thm:positivity_internal_energy}
  The specific internal energy $e$ and the internal energy $\rho e$ of the GP-MHD system \eqref{eq:mhd_GP_Powell} always remain positive given that the initial data fulfills $e > 0$.
\end{theorem}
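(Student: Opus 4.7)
The plan is to deduce positivity of $e$ (equivalently of $\rho e$, since $\rho>0$) as a direct consequence of the two results already established: positivity of density (Theorem~\ref{thm:positivity_density}) and the minimum entropy principle (Theorem~\ref{thm:min_entropy}), combined with the thermodynamic hypotheses of Section~\ref{Sec:Equation}. Since $T = s_e^{-1} > 0$, the map $e \mapsto s(\rho, e)$ is strictly increasing for fixed $\rho > 0$; under the strict convexity of $-s$ in $(\rho^{-1}, e)$ and the standard thermodynamic setup, this map tends to $-\infty$ as $e \to 0^+$ for each fixed $\rho > 0$. Combining the uniform bound $s(\bx, t) \geq s_* := \inf_{\bx} s_0(\bx) > -\infty$ from Theorem~\ref{thm:min_entropy} with $\rho > 0$ pointwise from Theorem~\ref{thm:positivity_density} then rules out $e \to 0^+$, giving $e > 0$.

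An alternative, constructive derivation mirroring the strategy of Lemma~\ref{lemma:min_entropy_specific} is to extract a parabolic equation for $\rho e$. One would pick up the argument at \eqref{eq:min_entropy_e} (whose Powell contribution is already eliminated by the condition $\alpha_E - \alpha_{\bbm} - \alpha_{\bB} = 1$), split $\polg:\GRAD\bu = \mu\rho|\GRAD^s\bu|^2 + (\bbf\otimes\bu):\GRAD\bu$, use a product rule to fold $(\bbf\otimes\bu):\GRAD\bu$ into $\DIV\bh$ so that $\DIV\bl = \DIV(\kappa\GRAD(\rho e))$ emerges, and apply the mass equation to rewrite the material derivative $\rho(\p_t e + \bu\SCAL\GRAD e)$ in conservative form. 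The result is
\[
\p_t(\rho e) + \DIV(\rho e \bu) - \DIV(\kappa \GRAD(\rho e)) = -p\,\DIV\bu + \mu\rho|\GRAD^s\bu|^2 + \polk:\GRAD\bB,
\]
where the magnetic piece is non-negative by \eqref{eq:resisitivity_sign}. A direct minimum principle argument on $\rho e$ would, however, still require signing $-p\,\DIV\bu$ at the infimum point, which hands the problem back to the equation of state.

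The main obstacle is therefore the final step: bridging rigorously from ``$s \geq s_*$ and $\rho > 0$'' to a strict positive lower bound on $e$ for a \emph{general} equation of state. This step genuinely relies on the asymptotic behaviour $s(\rho, e) \to -\infty$ as $e \to 0^+$, a property natural under the strict convexity of $-s$ and $T > 0$, but not itself spelled out in the excerpt. For concrete cases such as an ideal polytropic gas, $s = c_v \ln(e\rho^{1-\gamma}) + \text{const}$, inversion of the entropy gives the sharp quantitative bound $e(\bx,t) \geq \rho(\bx,t)^{\gamma-1}\exp\bigl((s_* - \text{const})/c_v\bigr) > 0$, which makes the abstract argument concrete.
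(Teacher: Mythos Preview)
Your primary argument matches the paper's: combine Theorems~\ref{thm:positivity_density} and~\ref{thm:min_entropy} with the monotonicity coming from $T=s_e^{-1}>0$. The only packaging difference is that the paper inverts to $e=e(s,\rho^{-1})$, deduces $e_s>0$ from $s_e>0$ (citing \cite[Appendix~A.1]{Guermond_Popov_2014}), and concludes $e(s,\rho^{-1})\geq e(\inf_{\mR^d} s_0,\rho^{-1})$ directly, thereby bypassing your explicit limit $s\to-\infty$ as $e\to0^+$; the residual concern you flag about why the resulting lower bound is strictly positive for a \emph{general} equation of state is equally implicit in the paper's version and is absorbed there into the phrase ``given physically correct initial data.'' Your alternative parabolic derivation for $\rho e$ and the ideal-gas quantitative bound are extras that do not appear in the paper's proof.
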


\begin{proof}
From the assumption that the temperature is positive, $T = s_e^{-1} > 0$, we have $s_e > 0$ where $s = s(e, \rho^{-1})$ and $e = e(s,\rho^{-1})$ defined by the equation of state \eqref{eq:equation_of_state}. Given positivity of density $\rho > 0$, we have $s_e > 0$ leads to $e_s > 0$ \citep[Appendix A.1]{Guermond_Popov_2014}. The minimum entropy principle implies that $s(\bx,t) \geq \inf_{\mR^d}s(\bx,t)\geq \inf_{\mR^d}s_0(\bx)$, where $s_0$ is the specific internal energy at $t=0$, which leads to
\[
e(s,\rho^{-1}) \geq e(\inf_{\mR^d}s,\rho^{-1}) \geq e(\inf_{\mR^d}s_0,\rho^{-1})
\]
for all $\rho > 0$ since $e_s > 0$. Therefore, given physically correct initial data, the specific internal energy is positive at all times $t > 0$, and so is the internal energy $\rho e$ due to the proven positivity of density.
\end{proof}

If an ideal gas is considered, Theorems~\ref{thm:positivity_density} and \ref{thm:positivity_internal_energy} give positivity of pressure following the ideal equation of state $p = (\gamma-1)\rho e$, where $\gamma$ is the \textit{adiabatic gas constant}.

\subsection{Generalized entropy inequalities}

In this section, we show that the proposed regularized model is compatible with all the generalized entropy inequalities. The generalized entropies in the manner of \cite{Harten_1998} consider a large class of strictly convex entropies $-\rho\phi(s)$ where $\phi(s)$ is a twice differentiable function. We say that $S=-\rho\phi(s)$ is strictly convex if and only if the Hessian matrix $S_{\bsfU\bsfU}$ is positive definite. In the following theorem, we look at important properties of $\phi(s)$.

\begin{theorem}[Properties of strictly convex generalized entropies \cite{Dao2022a}]\label{thm:generalized_entropy_convexity}
The generalized entropy $-\rho\phi(s)$ is strictly convex if and only if
\[
\phi'(s) > 0 \quad\text{ and }\quad
\frac{\phi'(s)}{c_p}-\phi''(s) > 0,
\]
where $c_p\coloneqq T\frac{\p s(p,T)}{\p T}$ is the specific heat capacity at constant pressure.
\end{theorem}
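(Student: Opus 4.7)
The plan is to compute the Hessian of $S = -\rho\phi(s)$ with respect to the conserved variables $\bsfU = (\rho, \bbm^\top, E, \bB^\top)^\top$ and determine when it is positive definite. Viewing $s = s(\rho, e)$ with $e(\bsfU) = E/\rho - |\bbm|^2/(2\rho^2) - |\bB|^2/(2\rho)$, and using that $\rho$ is linear in $\bsfU$, a direct application of the chain rule yields the clean decomposition
\[
S_{\bsfU\bsfU} = \phi'(s)\,(-\rho s)_{\bsfU\bsfU} - \rho\,\phi''(s)\, s_\bsfU \otimes s_\bsfU,
\]
so the question reduces to understanding a positive definite physical entropy Hessian together with a rank-one correction.

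I would first invoke the classical fact that, under the assumed strict convexity of $-s$ in $(\rho^{-1}, e)$, the physical entropy Hessian $(-\rho s)_{\bsfU\bsfU}$ for the MHD system is positive definite. This follows from a block analysis separating a $2\times 2$ thermodynamic block in $(d\rho, de)$, whose positivity is inherited from $-s$, from positive definite kinetic and magnetic blocks produced by the quadratic dependence of $e$ on $\bbm/\rho$ and $\bB/\sqrt{\rho}$. Consequently the leading term $\phi'(s)(-\rho s)_{\bsfU\bsfU}$ is positive definite if and only if $\phi'(s) > 0$, which is the first stated condition. For the rank-one correction, I would use the standard criterion that $A - c\,v v^\top$ with $A$ positive definite is positive definite if and only if $c\,v^\top A^{-1} v < 1$. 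Applied here with $A = \phi'(s)(-\rho s)_{\bsfU\bsfU}$, $c = \rho\phi''(s)$, $v = s_\bsfU$, this reduces the remaining question to evaluating the thermodynamic quadratic form $s_\bsfU^\top [(-\rho s)_{\bsfU\bsfU}]^{-1} s_\bsfU$. The identity $s_\bsfU^\top [(-\rho s)_{\bsfU\bsfU}]^{-1} s_\bsfU = c_p/\rho$, combined with the equation of state $p s_e + \rho^2 s_\rho = 0$ and the definition $c_p = T\,\p_T s(p,T)$, converts the criterion exactly into $\phi'(s)/c_p - \phi''(s) > 0$, yielding the second stated condition.

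The main obstacle is establishing the thermodynamic identity $s_\bsfU^\top [(-\rho s)_{\bsfU\bsfU}]^{-1} s_\bsfU = c_p/\rho$. This requires either an explicit inversion of the block-structured physical entropy Hessian or a careful direct computation of the relevant contraction; in either case one must switch from the $(\rho, e)$ to the $(p, T)$ representation through Maxwell-type relations, tracking the equation of state, so that the kinetic and magnetic contributions collapse and only the heat capacity term survives. Once this identity is in hand, the equivalence in the theorem follows from the sharpness of the two inequalities derived above.
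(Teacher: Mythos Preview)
Your approach is correct and is genuinely different from what the paper does. The paper (deferring to \cite[Appendix B]{Dao2022a}) computes the eigenvalues of $S_{\bsfU\bsfU}$ directly and reads off the sign conditions; you instead use the clean decomposition $S_{\bsfU\bsfU} = \phi'(s)\,(-\rho s)_{\bsfU\bsfU} - \rho\,\phi''(s)\, s_\bsfU \otimes s_\bsfU$ together with the Sherman--Morrison-type criterion for a rank-one perturbation of a positive definite matrix. Your route is structurally more transparent and localizes the entire thermodynamic content in the single scalar identity $s_\bsfU^\top [(-\rho s)_{\bsfU\bsfU}]^{-1} s_\bsfU = c_p/\rho$, whereas the eigenvalue computation buries this in determinant manipulations spread across several blocks. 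The trade-off is that the eigenvalue route is self-contained (no external identity to verify), while yours hinges on establishing that contraction identity, which you rightly flag as the main obstacle.

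One point to tighten: your argument that $\phi'(s)>0$ is \emph{necessary} is stated only for the leading term, not for the full Hessian $S_{\bsfU\bsfU}$. You should note that if $\phi'(s)\le 0$ then $\phi'(s)(-\rho s)_{\bsfU\bsfU}$ is negative semidefinite and a rank-one correction can raise at most one eigenvalue above zero, so in dimension $\ge 2$ the sum cannot be positive definite. With that rank argument added, the equivalence is complete.
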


The result of Theorem~\ref{thm:generalized_entropy_convexity} is derived by investigating the signs of the eigenvalues of $S_{\bsfU\bsfU}$ where the necessary and sufficient condition for the eigenvalues to be positive is the two inequalities of Theorem~\ref{thm:generalized_entropy_convexity}, see \citep[Appendix B]{Dao2022a}.

The main result of this section is stated in the following theorem.

\begin{theorem}[Generalized entropy inequalities]\label{thm:generalized_entropy} All smooth solutions to \eqref{eq:mhd_GP_Powell} satisfy
\[
\p_t(\rho \phi(s))+\DIV(\bu\rho \phi(s)-\kappa\rho\GRAD \phi(s)-\kappa \phi(s)\GRAD\rho) \geq 0.
\]
\end{theorem}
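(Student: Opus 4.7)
My plan is to derive a transport-diffusion equation for $\rho\phi(s)$ by combining $\phi(s)$ times the regularized mass equation with $\phi'(s)$ times the specific-entropy evolution of Lemma~\ref{lemma:min_entropy_specific}, and then to show the residual is a sum of pointwise nonnegative terms. I would first rewrite the conclusion of Lemma~\ref{lemma:min_entropy_specific} in the equivalent form
\[
\rho(\p_t s+\bu\SCAL\GRAD s) = \DIV(\rho\kappa\GRAD s) + \kappa\GRAD\rho\SCAL\GRAD s + \mathcal{R},
\]
with $\mathcal{R}\coloneqq -J_1+s_e(\polG:\GRAD\bu+\polk:\GRAD\bB)$, multiply by $\phi'(s)$ so that the convective part becomes the material derivative of $\phi(s)$, and add $\phi(s)$ times the mass equation $\p_t\rho+\DIV(\rho\bu)=\DIV(\kappa\GRAD\rho)$. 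The result reads
\[
\p_t(\rho\phi(s))+\DIV(\rho\bu\phi(s)) = \phi(s)\DIV(\kappa\GRAD\rho) + \phi'(s)\DIV(\rho\kappa\GRAD s) + \phi'(s)\kappa\GRAD\rho\SCAL\GRAD s + \phi'(s)\mathcal{R}.
\]

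Next I would absorb the diffusive terms into the conservative flux via two applications of the product rule, namely
\[
\phi(s)\DIV(\kappa\GRAD\rho) = \DIV(\kappa\phi(s)\GRAD\rho) - \phi'(s)\kappa\GRAD s\SCAL\GRAD\rho,
\]
\[
\phi'(s)\DIV(\rho\kappa\GRAD s) = \DIV(\kappa\rho\GRAD\phi(s)) - \kappa\rho\phi''(s)|\GRAD s|^2.
\]
The first identity produces a term that exactly cancels the cross term $+\phi'(s)\kappa\GRAD\rho\SCAL\GRAD s$ coming from the entropy equation, leaving the clean evolution
\[
\p_t(\rho\phi(s))+\DIV\bigl(\bu\rho\phi(s)-\kappa\rho\GRAD\phi(s)-\kappa\phi(s)\GRAD\rho\bigr) = -\kappa\rho\phi''(s)|\GRAD s|^2 + \phi'(s)\mathcal{R},
\]
whose left-hand side is exactly the conservation form stated in the theorem. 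The claim then reduces to the pointwise inequality $-\kappa\rho\phi''(s)|\GRAD s|^2 + \phi'(s)\mathcal{R}\geq 0$.

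Two of the three contributions inside $\phi'(s)\mathcal R$ are immediate: $\polG:\GRAD\bu = \mu\rho|\GRAD^s\bu|^2\geq 0$ by the usual symmetric/antisymmetric contraction argument, and $\polk:\GRAD\bB\geq 0$ by \eqref{eq:resisitivity_sign}. Both pick up the strictly positive prefactor $\phi'(s)s_e$ since $\phi'(s)>0$ by Theorem~\ref{thm:generalized_entropy_convexity} and $s_e=T^{-1}>0$ by the standing positivity-of-temperature assumption.

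The main obstacle is to handle the remaining quadratic form $-\phi'(s)J_1 - \kappa\rho\phi''(s)|\GRAD s|^2$, since the sign of $\phi''(s)$ is not fixed by Theorem~\ref{thm:generalized_entropy_convexity}. Substituting $\GRAD s=s_\rho\GRAD\rho+s_e\GRAD e$ and using the matrix representation of $J_1$ from the proof of Lemma~\ref{lemma:min_entropy_quadratic_form}, this reduces to checking positive semi-definiteness of the $2\times 2$ symmetric matrix
\[
A \coloneqq -\phi'(s)\kappa\begin{pmatrix}\rho^{-1}\p_\rho(\rho^2 s_\rho) & \rho s_{\rho e}\\ \rho s_{\rho e} & \rho s_{ee}\end{pmatrix} - \kappa\rho\phi''(s)\begin{pmatrix}s_\rho^2 & s_\rho s_e\\ s_\rho s_e & s_e^2\end{pmatrix}.
\]
I would verify positivity of the trace and determinant of $A$, invoking the thermodynamic identities $s_e=1/T$, $s_{ee}=-1/(c_v T^2)$, and the standard relations expressing $s_{\rho e}$ and $\p_\rho(\rho^2 s_\rho)$ in terms of $c_p$, $c_v$, $T$, and the adiabatic sound speed. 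The two inequalities of Theorem~\ref{thm:generalized_entropy_convexity}, namely $\phi'(s)>0$ and $\phi'(s)/c_p>\phi''(s)$, together with $c_p\geq c_v>0$, are exactly what is required to force both signs. This is the same quadratic-form computation already performed for the monolithic regularization in \cite{Dao2022a}; because the magnetic and velocity-shear dissipations are treated separately in the preceding step, the argument transfers without modification to the present GP-MHD setting.
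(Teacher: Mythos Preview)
Your proof is correct and follows essentially the same route as the paper: you start from the specific-entropy identity (equivalently \eqref{eq:entropy_equality}), multiply by $\phi'(s)$, add $\phi(s)$ times the regularized mass equation, and use the two product-rule identities to absorb the diffusive terms into the conservative flux, arriving at precisely the paper's equation \eqref{eq:generalized_entropy_equality}. The treatment of the $\polG:\GRAD\bu$ and $\polk:\GRAD\bB$ dissipations via $\phi'(s)>0$ and $s_e>0$ is identical.

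The only minor difference is in how you handle the remaining quadratic form. The paper first invokes the inequality $\phi''(s)<\phi'(s)/c_p$ from Theorem~\ref{thm:generalized_entropy_convexity} to bound $-\kappa\rho\phi''(s)|\GRAD s|^2-\phi'(s)J_1$ below by $-\phi'(s)J_2$, and then shows the $\phi$-independent matrix $J_3$ is negative definite (citing \cite[Lemma~4]{Dao2022a}); you instead propose to verify trace and determinant of the $\phi$-dependent matrix $A$ directly. These are equivalent, but the paper's factoring is slightly cleaner because it isolates a matrix depending only on the equation of state. Since you too defer the core $2\times2$ computation to \cite{Dao2022a}, the substance is the same.
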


\begin{proof}
Multiply \eqref{eq:entropy_equality} with $\phi'(s)$, we have
\begin{equation}\label{eq:generalized_entropy_phi_prime}
\begin{aligned}
  \rho (\p_t \phi(s)+\bu\SCAL\GRAD\phi(s)) - \DIV(\kappa\rho\GRAD\phi(s)) + \kappa\rho\phi''(s)|\GRAD s|^2-\kappa\phi'(s)\GRAD\rho\SCAL\GRAD s\\
  +\phi'(s)J_1 = \phi'(s)s_e\polG:\GRAD\bu+\phi'(s)s_e\polk:\GRAD\bB.
\end{aligned}
\end{equation}
Multiplying the regularized mass equation with $\phi$ then adding the result to \eqref{eq:generalized_entropy_phi_prime}, the product rules give
\begin{equation}\label{eq:generalized_entropy_equality}
\begin{aligned}
 & \p_t(\rho\phi(s)) + \DIV(\bu\rho\phi(s))-\DIV(\kappa\rho\GRAD\phi(s)+\kappa\phi(s)\GRAD\rho)\\
 &  =-\kappa\rho\phi''(s)|\GRAD s|^2-\phi'(s)J_1 + \phi'(s)s_e\polG:\GRAD\bu+\phi'(s)s_e\polk:\GRAD\bB.
\end{aligned}
\end{equation}
Using Theorem~\ref{thm:generalized_entropy_convexity}, we have
\begin{equation}\label{eq:generalized_entropy_J1J2}
-\kappa\rho\phi''(s)|\GRAD s|^2-\phi'(s)J_1 > -\phi'(s)(\kappa\rho c_p^{-1}|\GRAD s|^2+J_1).
\end{equation}
Let $J_2 \coloneqq \kappa\rho c_p^{-1}|\GRAD s|^2+J_1$. It is possible to write $J_2$ in a quadratic form as
\[
J_2 = \kappa
\begin{pmatrix}\GRAD\rho\\ \GRAD e\end{pmatrix}
\begin{pmatrix}
  J_3
  \otimes
  \polI_d
\end{pmatrix}
\begin{pmatrix}\GRAD\rho & \GRAD e\end{pmatrix},
\]
where $J_3$ is obtained by applying the chain rule on $\GRAD s$,
\[
J_3 =
\begin{pmatrix}
c_p^{-1}\rho s_\rho^2+\rho^{-1}\p_{\rho}(\rho^2 s_{\rho}) & c_p^{-1}s_\rho\rho s_e+\rho s_{\rho e} \\
c_p^{-1}\rho s_\rho s_e+\rho s_{\rho e} & c_p^{-1}\rho s_e^2+ \rho s_{ee}
\end{pmatrix}.
\]
The $2\times2$ matrix $J_3$ is negative definite due to the strict convexity of $-s$, see \citep[Lemma 4]{Dao2022a}. We deduce from this and \eqref{eq:generalized_entropy_J1J2} that
\[
-\kappa\rho\phi''(s)|\GRAD s|^2-\phi'(s)J_1 \geq 0.
\]
Therefore, the right hand side of \eqref{eq:generalized_entropy_equality} is greater than or equal to zero, which leads to the desired conclusion.
\end{proof}

\subsection{Galilean and rotational invariance}
By Galilean relativity, a physical law should stay invariant under a Galilean transformation, which in this case means that the set of equations does not change under a shift of the reference frame. In the following theorem, we state this property of the GP-MHD system.
\begin{theorem}\label{thm:galilean_invariance}
The GP-MHD system \eqref{eq:mhd_GP_Powell} is Galilean invariant.
\end{theorem}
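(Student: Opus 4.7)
The plan is to apply a constant Galilean boost $\bx' = \bx - \bV t$, $t' = t$ to the GP-MHD system \eqref{eq:mhd_GP_Powell}, inducing the transformations $\rho' = \rho$, $\bu' = \bu - \bV$, $\bbm' = \bbm - \rho\bV$, $\bB' = \bB$, $e' = e$, $E' = E - \bbm\SCAL\bV + \tfrac12\rho|\bV|^2$, together with the operator identities $\GRAD = \GRAD'$ and $\p_t = \p_{t'} - \bV\SCAL\GRAD'$, and to verify that $(\rho',\bbm'^\top,E',\bB'^\top)^\top$ satisfies the same system expressed in the primed variables. The advective flux $\bsfF_\calE + \bsfF_\calB$ is the classical ideal MHD flux, whose Galilean invariance is standard up to $\DIV\bB$-induced residuals; part of the role of the Powell source $\Psi$ (with the parameter choices of \cref{sec:div_source_term}) is precisely to absorb those residuals, so the analysis must handle the viscous flux $\bsfF_\calV^{\text{GP}}$ and $\Psi$ together. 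A useful preliminary observation is that, since $\bV$ is constant, $\GRAD\bu = \GRAD'\bu'$, $\GRAD^s\bu = \GRAD^s\bu'$ and $\GRAD\bB = \GRAD'\bB'$, so $\polG$, $\polk$, $\bbf = \kappa\GRAD\rho$ and $\bl = \kappa\GRAD(\rho e)$ are all frame-invariant.

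I would then handle the equations in sequence. The mass equation follows by inspection. For the magnetic equation, substituting $\bu = \bu' + \bV$ into $\DIV(\bu\otimes\bB - \bB\otimes\bu)$ and pairing the result with $\p_t\bB = \p_{t'}\bB' - \bV\SCAL\GRAD'\bB'$ leaves a residual proportional to $\bV\DIV'\bB'$; this is precisely the piece that the boost-induced part of the Powell source $\alpha_\bB\bu\DIV\bB$ cancels, given the prescribed value of $\alpha_\bB$, while $\polk$ is manifestly invariant. For the momentum equation, substituting $\bbm = \bbm' + \rho\bV$ into $\p_t\bbm + \DIV(\bbm\otimes\bu)$ and using the (already-verified) mass equation produces an extra $\bV\DIV'(\kappa\GRAD'\rho')$ on the LHS, which must be matched by the $\bV$-dependent residual extracted from the expansion $\bbf\otimes\bu = \bbf\otimes\bu' + \bbf\otimes\bV$ inside $\polg$; the Powell term $\alpha_\bbm\bB\DIV\bB$ is frame-invariant.

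The energy equation, where the main obstacle lies, is treated last. Substituting $E = E' + \bbm'\SCAL\bV + \tfrac12\rho|\bV|^2$ into the LHS and using $\p_t = \p_{t'} - \bV\SCAL\GRAD'$ produces, after grouping, contributions that can be recognized as $\bV\SCAL(\text{momentum equation}) + \tfrac12|\bV|^2(\text{mass equation})$. On the RHS, the viscous energy flux $\bh + \polg\SCAL\bu + \polk\SCAL\bB$ picks up several $\bV$-dependent pieces through the explicit $\bu$'s in $\tfrac12|\bu|^2\bbf$ and in $\polg\bu$, and the Powell source $\alpha_E(\bu\SCAL\bB)\DIV\bB$ contributes an additional $\alpha_E(\bV\SCAL\bB)\DIV\bB$. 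The crux of the proof is to show that when the primed mass, momentum, and magnetic equations established in the previous step are substituted into these pieces, every boost-induced residual cancels, leaving the primed energy equation with the same GP viscous flux and Powell source. The delicate interplay between the kinetic-energy shift $\tfrac12\rho|\bV|^2$, the momentum shift $\rho\bV$ interacting with the $\bu$-dependent viscous terms, and the $\DIV\bB$-induced source contribution — whose cancellation relies on the same algebraic relation $\alpha_E - \alpha_\bbm - \alpha_\bB = 1$ used in \cref{lemma:min_entropy_specific} — is the central bookkeeping step of the argument.
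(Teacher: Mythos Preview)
Your overall approach coincides with the paper's: apply the Galilean boost, verify each conservation law in turn, and show that the boost-induced residuals cancel once the previously-established equations are substituted. The paper carries this out by explicit component-wise expansion in two dimensions with the boost along a single coordinate axis, whereas you work in coordinate-free tensor notation with a general constant $\bV$; conceptually there is no difference.

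There is, however, an error in your identification of the constraint on the source coefficients. You assert that the energy-equation cancellation ``relies on the same algebraic relation $\alpha_E-\alpha_\bbm-\alpha_\bB=1$'' used in \cref{lemma:min_entropy_specific}. That is the entropy condition, not the Galilean-invariance condition. Tracing the residuals: the magnetic equation leaves a term proportional to $\bV\,\DIV\bB$ from the induction flux, which is absorbed by the boost of $\alpha_\bB\bu\,\DIV\bB$ only when $\alpha_\bB=-1$; in the energy equation the boost of the source produces $\alpha_E(\bV\SCAL\bB)\,\DIV\bB$, which must match the contribution $\bV\SCAL(\alpha_\bbm\bB\,\DIV\bB)$ coming from the $\bV\SCAL(\text{momentum})$ substitution, forcing $\alpha_E=\alpha_\bbm$. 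These two constraints together do imply your relation, but they are strictly stronger: for instance $\Psi_{\text{BB}}=\Psi(-1,0,0)$ satisfies $\alpha_E-\alpha_\bbm-\alpha_\bB=1$ yet is not Galilean invariant (cf.\ Table~\ref{table:conclusion_divergence}). The remedy is simply to invoke the correct pair of constraints $\alpha_\bB=-1$, $\alpha_E=\alpha_\bbm$ in place of the entropy relation; the rest of your argument then goes through.
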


\begin{proof}
See Section~\ref{appendix:galilean_invariance}.
\end{proof}

Rotational invariance is also a physically relevant property. This property says that the laws described by the set of equations remain valid when the observation frame is rotated by an arbitrary angle. Regarding numerical methods, many Riemann solver-based methods rely on rotational invariance, see e.g., \cite{Billett_Toro_1998}. We look at this property of the GP-MHD system in the following theorem.

\begin{theorem}\label{thm:rotational_invariance}
The GP-MHD system \eqref{eq:mhd_GP_Powell} is rotationally invariant.
\end{theorem}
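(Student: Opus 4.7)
The plan is to consider an arbitrary orthogonal rotation $\bx \mapsto \bx' = \bQ\bx$ with $\bQ^\top\bQ = \polI$ and verify that, under the accompanying transformation of the conserved variables
\[
\rho' = \rho,\quad \bbm' = \bQ\bbm,\quad E' = E,\quad \bB' = \bQ\bB
\]
(which implies $\bu' = \bQ\bu$, $e' = e$, $p' = p$, and $s' = s$), every term of \eqref{eq:mhd_GP_Powell} maps to its counterpart in the rotated variables. Since the rotation is purely spatial, $\p_t$ acts identically on both sides and we only need to track the spatial operators.

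First I would record the elementary tensor calculus that underlies the whole argument. Using $\nabla' = \bQ\nabla$, one readily checks that for any scalar field $f$, vector field $\bv$, and rank-two tensor field $\polA$ satisfying $f' = f$, $\bv' = \bQ\bv$, $\polA' = \bQ\polA\bQ^\top$, we have $\nabla' f' = \bQ\nabla f$, $\GRAD'\bv' = \bQ(\GRAD\bv)\bQ^\top$, $\DIV'\bv' = \DIV\bv$, and $\DIV'\polA' = \bQ\DIV\polA$. In particular the symmetric and anti-symmetric parts of $\GRAD\bv$ transform as tensors, so $\GRAD'^s\bu' = \bQ(\GRAD^s\bu)\bQ^\top$ and $\GRAD'\bB' - (\GRAD'\bB')^\top = \bQ(\GRAD\bB - \GRAD\bB^\top)\bQ^\top$.

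Next I would apply these rules to each block of the equation. For the advective flux $\bsfF_{\calE}$: the mass component $\bbm$ is a vector, the momentum component $\bbm\otimes\bu + p\polI$ is a tensor, the energy component $\bu(E+p)$ is a vector. For $\bsfF_{\calB}$: the Maxwell stress $\bbetaa = -\tfrac12|\bB|^2\polI + \bB\otimes\bB$ transforms as $\bbetaa' = \bQ\bbetaa\bQ^\top$ because $|\bB|^2$ is invariant and $\bB\otimes\bB$ is a tensor, and the magnetic flux $\bu\otimes\bB - \bB\otimes\bu$ is a tensor. For the viscous flux \eqref{eq:GP_mhd_flux}: $\kappa\GRAD\rho$ is a vector; $\mu\rho\GRAD^s\bu + (\kappa\GRAD\rho)\otimes\bu$ is a tensor; $\kappa\GRAD(\rho e) + \frac{|\bu|^2}{2}\kappa\GRAD\rho$ is a vector (invariant scalars times gradients); $\mu\rho(\GRAD^s\bu)\bu$ and $\eta(\GRAD\bB - \GRAD\bB^\top)\bB$ are tensors applied to vectors, hence vectors; and $\eta(\GRAD\bB - \GRAD\bB^\top)$ is a tensor. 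Finally, in the source term \eqref{eq:source_term}, $\DIV\bB$ is an invariant scalar, $\bu\SCAL\bB$ is an invariant scalar, while $\bB$ and $\bu$ are vectors, so $\Psi$ transforms componentwise as $\bsfU$ itself.

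Combining these block-by-block observations and using $\DIV'\polA' = \bQ\DIV\polA$ together with $\DIV'\bv' = \DIV\bv$, the rotated PDE is exactly $\bQ$ (or the identity, for scalar rows) applied to the original PDE. Multiplying the vector rows on the left by $\bQ^\top$ recovers \eqref{eq:mhd_GP_Powell} verbatim in the primed variables, establishing rotational invariance. The calculations are purely algebraic and the main bookkeeping obstacle is simply to confirm the tensor/vector character of each summand in the energy row of $\bsfF_{\calV}^{\text{GP}}$ and the correct placement of $\bQ^\top$ factors in terms such as $\eta(\GRAD\bB - \GRAD\bB^\top)\bB$; all other terms reduce directly to the Euler and standard resistive-MHD computations.
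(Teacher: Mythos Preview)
Your argument is correct. You take the intrinsic route: identify each block of $\bsfF_{\calE}$, $\bsfF_{\calB}$, $\bsfF_{\calV}^{\text{GP}}$, and $\Psi$ as a scalar, vector, or rank-two tensor under the rotation $\bQ$, then invoke the standard transformation rules $\DIV'\bv'=\DIV\bv$ and $\DIV'\polA'=\bQ\,\DIV\polA$. Once the tensor character of, say, $\mu\rho\GRAD^s\bu+(\kappa\GRAD\rho)\otimes\bu$ and $\eta(\GRAD\bB-\GRAD\bB^\top)$ is established, the result follows without any coordinate computation.

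The paper proceeds quite differently. It parameterizes the rotation explicitly via two angles, $\bR=\bR_\psi\bR_\theta$, builds the block-diagonal transformation $\bT$ on the state vector, and reduces rotational invariance to componentwise identities of the form $\bR_{11}\bsfF_1(\bsfU)+\bR_{12}\bsfF_2(\bsfU)+\bR_{13}\bsfF_3(\bsfU)=\bT^{-1}\bsfF_1(\bT\bsfU)$ for the advective flux, an analogous identity for the source term, and a pair of conditions (\cref{proposition:rotational_invariance}) matching the $\p_{x_1}\p_{x_1}$ and $\p_{x_1}\p_{x_2}$ contributions of the viscous flux. These identities are then verified by a symbolic Matlab script. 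Your approach is shorter and more transparent, and it makes clear that nothing beyond the standard covariance of gradients and divergences is at stake; the paper's approach, by contrast, yields an explicit machine-checkable certificate, which is useful when one wants to test variants of the viscous flux without repeating the tensor bookkeeping. One small point worth making explicit in your write-up is that the viscosity coefficients $\kappa,\mu,\eta$ must themselves transform as scalars (e.g.\ depend on the state only through rotationally invariant quantities); both you and the paper assume this implicitly.
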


\begin{proof}
See Section~\ref{appendix:rotational_invariance}.
\end{proof}

\subsection{GLM-GP-MHD systems}
Including the Powell term alone is insufficient to reduce $|\DIV\bB|$ in general. As the solution evolves, it is necessary to clean up the unphysical portion of $\bB$. A preferred approach to divergence cleaning is the hyperbolic cleaning method, see e.g., \cite{Bohm_2020,Guillet_2019,Rueda-Ramirez_2021}, also known as the generalized Lagrange multiplier (GLM) \cite{Dedner_et_al_2002}. In this section, we investigate three variations of the GLM with the most favorable conservative properties. The first one is a classical formulation by \cite{Dedner_et_al_2002}. The second one is an improvement by \cite{Derigs_2018}. The third one is a modified version of the second one which compromises Galilean invariance in exchange for energy conservation.

\subsubsection{The Galilean invariant extended GLM system by \cite{Dedner_et_al_2002}}
 The GLM system is obtained by adding a conserved variable $\Phi$ with an extra governing equation,
\[
\p_t \Phi+c_{h} \DIV \bB=-\bu\SCAL\GRAD\Phi-\frac{c_r c_h}{h} \Phi,
\]
where $c_h$ is the divergence cleaning speed, $c_r$ is a global constant, and $h$ is a mesh size indicator. We note that the GLM system being used here is referred to as the Galilean invariant extended GLM system in \cite{Dedner_et_al_2002,Derigs_2018}. The vector of conserved variables is extended to $\bsfU \coloneqq (\rho, \bbm^\top, E, \bB^\top, \Phi)^\top$. The GLM-GP-MHD system reads
\begin{equation}\label{eq:mhd_GP_Powell_GLM}
\p_t \bsfU + \DIV \bsfF_{\calE}(\bsfU) + \DIV \bsfF_{\calB}(\bsfU) - \DIV \bsfF_{\calV}^{\text{GP}}(\bsfU) = \Psi + \Upsilon_{\text{GLM}},
\end{equation}
where all the GLM terms are gathered in $\Upsilon_{\text{GLM}}$,
\[
\Upsilon_{\text{GLM}} \coloneqq \begin{pmatrix}
0\\
0\\
-c_h\bB\SCAL\GRAD\Phi\\
-c_h\GRAD\Phi\\
-\bu\SCAL\GRAD\Phi-\frac{c_rc_h}{h} \Phi-c_h\DIV \bB
\end{pmatrix}.
\]
\begin{theorem}\label{thm:dedner_properties}
The results of Theorems~\ref{thm:positivity_density} (positivity of density), \ref{thm:min_entropy} (minimum entropy principle), \ref{thm:positivity_internal_energy} (positivity of internal energy), and \ref{thm:generalized_entropy} (generalized entropy inequalities) still hold for the GLM-GP-MHD system \eqref{eq:mhd_GP_Powell_GLM}.
\end{theorem}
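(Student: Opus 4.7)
The plan is to observe that the GLM contributions in $\Upsilon_{\text{GLM}}$ cancel exactly in the combination of equations used to derive the internal energy (and hence the specific entropy) equation, so that the derivations underlying Theorems~\ref{thm:positivity_density}, \ref{thm:min_entropy}, \ref{thm:positivity_internal_energy}, and \ref{thm:generalized_entropy} carry over to the GLM-GP-MHD system essentially verbatim.

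First, I would dispose of Theorem~\ref{thm:positivity_density} at once: the first component of $\Upsilon_{\text{GLM}}$ is zero, so the mass equation of \eqref{eq:mhd_GP_Powell_GLM} is identical to that of \eqref{eq:mhd_GP_Powell}, and the positivity-of-density argument applies unchanged.

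The main work is to re-examine the derivation of Lemma~\ref{lemma:min_entropy_specific}. The GLM contributions to the momentum, energy, and magnetic equations are $\mathbf{0}$, $-c_h \bB\SCAL\GRAD\Phi$, and $-c_h\GRAD\Phi$, respectively. In the proof of the lemma, one forms the combination (energy equation) $-\, \bu\SCAL$(momentum equation) $-\, \bB\SCAL$(magnetic equation); the GLM pieces then collapse to
\[
-c_h\bB\SCAL\GRAD\Phi \;-\; 0 \;-\; \bB\SCAL(-c_h\GRAD\Phi) \;=\; 0.
\]
Hence the internal energy equation \eqref{eq:min_entropy_e} and the subsequent specific entropy identity are unaffected by the introduction of $\Upsilon_{\text{GLM}}$, and Lemma~\ref{lemma:min_entropy_specific} (with the same condition $\alpha_E-\alpha_{\bbm}-\alpha_{\bB}=1$) holds verbatim. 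Since the proofs of Theorems~\ref{thm:min_entropy} and \ref{thm:generalized_entropy} invoke only the mass equation and the specific entropy equation via \eqref{eq:entropy_equality} and \eqref{eq:generalized_entropy_equality}, those two theorems extend immediately; and Theorem~\ref{thm:positivity_internal_energy} is a direct consequence of the minimum entropy principle together with the unchanged positivity of density, so it also carries over without modification.

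The only auxiliary point to check is that the asymptotic hypothesis of Theorem~\ref{thm:min_entropy}—uniform convergence of $\rho$ and $e$ to constant states $\rho^*,e^*$ outside a compact set—remains compatible with the extended system; this is automatic, since $\Phi$ does not enter the mass equation and its contribution to the internal energy balance vanishes, so the pointwise minimum argument based on $\GRAD s(\bar\bx(t),t)=0$ and $\LAP s(\bar\bx(t),t)\geq 0$ proceeds identically. I do not anticipate a genuine obstacle: the whole theorem is essentially a bookkeeping verification, and the only subtlety is making sure the cancellation above is carried out cleanly before appealing to the previous proofs.
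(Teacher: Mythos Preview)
Your proposal is correct and matches the paper's own proof essentially line for line: the paper also disposes of Theorem~\ref{thm:positivity_density} by noting the mass equation is unchanged, then verifies that the GLM contributions cancel in the combination (energy) $-\,\bu\SCAL$(momentum) $-\,\bB\SCAL$(magnetic) so that \eqref{eq:min_entropy_e} persists, and finally infers Theorems~\ref{thm:min_entropy}, \ref{thm:generalized_entropy}, and~\ref{thm:positivity_internal_energy} from this.
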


\begin{proof}
Theorem~\ref{thm:positivity_density} follows because the mass equation remains the same in \eqref{eq:mhd_GP_Powell_GLM}.

We now revisit a key step in Theorem~\ref{thm:min_entropy} with \eqref{eq:mhd_GP_Powell_GLM}. Multiplying the $\Upsilon_{\text{GLM}}$ components in the momentum equation with $\bu$, the magnetic equation with $\bB$, then subtract them from the $\Upsilon_{\text{GLM}}$ component in the energy equation gives zero. Therefore, \eqref{eq:min_entropy_e} also holds for the GLM-GP-MHD system \eqref{eq:mhd_GP_Powell_GLM} and so the rest of the proof to Theorem~\ref{thm:min_entropy}, and so Theorem~\ref{thm:generalized_entropy}. Theorem~\ref{thm:positivity_internal_energy} holds since both Theorems~\ref{thm:positivity_density} and \ref{thm:min_entropy} hold.
\end{proof}

Furthermore, the Galilean invariance of $\Upsilon_{\text{GLM}}$ is known, see \cite{Dedner_et_al_2002}. Thus, Galilean invariance of the GLM-GP-MHD system \eqref{eq:mhd_GP_Powell_GLM} naturally follows.

\subsubsection{Nine-wave formulation by \cite{Derigs_2018}}
By adding a contribution of $\Phi$ into the total energy
\[
E^* = \rho e + \frac12\rho|\bu|^2+\frac12|\bB|^2+\frac12\Phi^2,
\]
a new GLM formulation is derived by \cite{Derigs_2018},
\[
\Upsilon_{\text{9W-GLM}} \coloneqq \begin{pmatrix}
0\\
0\\
-c_h\bB\SCAL\GRAD\Phi - c_h\Phi(\DIV\bB) + \Phi\bu\SCAL\GRAD\Phi\\
-c_h\GRAD\Phi\\
-\bu\SCAL\GRAD\Phi-c_h\DIV \bB
\end{pmatrix}
=
\begin{pmatrix}
0\\
0\\
-c_h\DIV(\Phi\bB) + \Phi\bu\SCAL\GRAD\Phi\\
-c_h\GRAD\Phi\\
-\bu\SCAL\GRAD\Phi-c_h\DIV \bB
\end{pmatrix}.
\]
The properties in Theorem~\ref{thm:dedner_properties} and Galilean invariance hold true for $\Upsilon_{\text{9W-GLM}}$ with $E$ being replaced by $E^*$. We skip the proofs here because except for the positivity of density, $\Upsilon_{\text{9W-GLM}}$ has been thoroughly analyzed in \cite{Derigs_2018}.

\subsubsection{An energy conservative GLM formulation} \label{Sec:energy_GLM}
If conservation of energy is of interest, we propose a new GLM formulation which is a slight modification of the GLM formulation proposed by \cite{Derigs_2018},
\[
\Upsilon_{\text{CONS-GLM}} \coloneqq 
\begin{pmatrix}
0\\
0\\
-c_h\DIV(\Phi\bB) \\
-c_h\GRAD\Phi\\
-c_h\DIV \bB
\end{pmatrix}.
\]
Similar to $\Upsilon_{\text{9W-GLM}}$, the properties in Theorem~\ref{thm:dedner_properties} hold true for $\Upsilon_{\text{CONS-GLM}}$ with $E$ being replaced by $E^*$. However, $\Upsilon_{\text{CONS-GLM}}$ conserves $E^*$ but is not Galilean invariant and $\Upsilon_{\text{9W-GLM}}$ is Galilean invariant but does not conserve $E^*$.



\subsection{Conservation}\label{sec:conservativeness}
In this section we investigate how the viscous regularization $\bsfF_{\calV}(\bsfU)$ and divergence source term $\Psi(\alpha_{\bbm},\alpha_E,\alpha_{\bB})$ effect conservation properties. \cred{To simplify the boundary integrals, we assume that the velocity $\bu$ and the magnetic field $\bB$ are compactly supported. This assumption has no conflicts with the previous assumptions in Sections~\ref{thm:positivity_density} and \ref{thm:min_entropy}}. The ideal MHD equations \eqref{eq:mhd1} with pointwise $\DIV \bB = 0$ conserve mass, momentum, angular momentum, total energy, magnetism and magnetic helicity. These are defined as
\begin{equation} \label{eq:conservation_properties}
\begin{alignedat}{4}
& \text{Mass} \quad && \int_{\mR^d} \rho \ud \bx, \quad && \text{Momentum} \quad && \int_{\mR^d}  \bbm \ud \bx, \\
& \text{Total energy} &&\int_{\mR^d} E \ud \bx, && \text{Angular momentum} \quad && \int_{\mR^d}  \bbm \CROSS \bx \ud \bx, \\ 
& \text{Magnetism} \quad && \int_{\mR^d}  \bB  \ud \bx, && \text{Magnetic helicity} \quad && \int_{\mR^d}  \bA \SCAL \bB \ud \bx, 
\end{alignedat}
\end{equation}
where $\bA$ is a vector potential to $\bB$, \ie $\nabla \CROSS \bA = \bB$. For $\bB$ to admit a vector potential $\bA$, we require that $\bB$ is pointwise divergence-free since the divergence of curl is zero, \ie $ \DIV ( \nabla \CROSS \bA ) = 0$. Thus magnetic helicity is only conserved if $\DIV \bB = 0$. The other properties in \eqref{eq:conservation_properties} can be shown to be conserved for the ideal MHD equations \eqref{eq:mhd1} even if $\DIV \bB \neq 0$.



The viscous regularization $\bsfF_{\calV}(\bsfU)$ inside \eqref{eq:mhd_viscous} can effect the conserved properties. It is known that zero resistivity in the magnetic equation is required for the MHD equations to conserve magnetic helicity, see Section~\ref{appendix:magnetic_helicity} for more details. It is also known that viscous regularization via mass diffusion, \ie $\kappa \neq 0$, can make flow models lose conservation of angular momentum which is a criticism brought forward against such models \cite{ottinger2009,Svard2018}. In Section \ref{sec:angular_flux}, we propose a variant of the viscous regularization \eqref{eq:GP_mhd_flux} that conserves angular momentum. Lastly, the divergence source term $\Psi(\alpha_{\bbm},\alpha_E,\alpha_{\bB})$ will affect conservation depending on $\alpha_{\bbm},\alpha_E,\alpha_{\bB}$, see \eg \cite{Powell_et_al_1999,Janhunen_2000}.






\subsection{An angular momentum conserving viscous regularization} \label{sec:angular_flux}
By modifying the GP flux such that the viscous flux in the momentum equations is symmetric one can ensure that angular momentum is conserved. It is, however, also important that entropy principles still hold, meaning that the energy equation has to be adequately compensated. To this end, if conservation of angular momentum is of interest, we propose the following $\text{GP}^s$-MHD system
\begin{equation}\label{eq:mhd_angular_Powell}
\p_t \bsfU + \DIV \bsfF_{\calE}(\bsfU) + \DIV \bsfF_{\calB}(\bsfU) = \DIV \bsfF^{\text{GP}^s}_{\calV}(\bsfU) + \bsfF^E(\bsfU)  + \Psi,
\end{equation}
where $\bsfF^{\text{GP}^s}_{\calV}$ is the same as $\bsfF^{\text{GP}}_{\calV}$ except that the viscous flux in the momentum equations is symmetric, $\bsfF^E$ is a nonconservative contribution to the energy equation, and $\Psi$ is the nonconservative divergence source term defined in Section~\ref{sec:div_source_term}. The new terms $\bsfF^{\text{GP}^s}_{\calV}$ and $\bsfF^E$ are given by
\begin{equation}
\bsfF^{\text{GP}^s}_{\calV}(\bsfU) \coloneqq \begin{pmatrix}
\kappa\nabla\rho \\
\mu\rho\GRAD^s\bu       +    \frac{1}{2} ( (\kappa\nabla\rho)\otimes\bu + \bu \otimes (\kappa\nabla\rho)  )    \\
\kappa\GRAD(\rho e) + \frac{|\bu|^2}{2}\kappa\nabla\rho + \mu\rho (\GRAD^s\bu)\bu + \eta \big( \GRAD \bB - \GRAD \bB^\top \big) \bB \\
\eta \big( \GRAD \bB - \GRAD \bB^\top \big)
\end{pmatrix},
\end{equation}
\begin{equation} \label{eq:energy_compensation}
\bsfF^E \coloneqq \begin{pmatrix}
0 \\
0    \\
  \frac{1}{2}  \l(   \DIV \l( \bu \otimes (\kappa\nabla\rho) - (\kappa\nabla\rho) \otimes \bu \r) \r) \SCAL \bu \\
0
\end{pmatrix}.
\end{equation}


We note that in one spatial dimension the $\text{GP}^s$-MHD system \eqref{eq:mhd_angular_Powell} is equivalent to the GP-MHD system \eqref{eq:mhd_GP_Powell}. In more than one spatial dimension they are different in the following ways. The $\text{GP}^s$-MHD system conserves angular momentum, but not total energy whereas the GP-MHD system conserves total energy but not angular momentum. Both systems satisfy the same entropy principles. This is summarized in Theorems \ref{thm:entropy_ang} and \ref{thm:conservation}.




\begin{theorem} \label{thm:entropy_ang}
The results of Theorems~\ref{thm:positivity_density} (positivity of density), \ref{thm:min_entropy} (minimum entropy principle), \ref{thm:positivity_internal_energy} (positivity of internal energy), and \ref{thm:generalized_entropy} (generalized entropy inequalities) still hold for the $\text{GP}^s$-MHD system \eqref{eq:mhd_angular_Powell}.
\end{theorem}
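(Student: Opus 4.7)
The plan is to reduce each of the four claims to its counterpart for the GP-MHD system \eqref{eq:mhd_GP_Powell} by showing that the modifications defining \eqref{eq:mhd_angular_Powell} do not alter the equations that drive the proofs. The mass equation in \eqref{eq:mhd_angular_Powell} is literally identical to the one in \eqref{eq:mhd_GP_Powell}, so Theorem~\ref{thm:positivity_density} carries over verbatim and yields positivity of density.

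The heart of the matter is to redo the derivation of Lemma~\ref{lemma:min_entropy_specific} for the $\text{GP}^s$-MHD system. I would multiply the momentum equation in \eqref{eq:mhd_angular_Powell} by $\bu$, the magnetic equation by $\bB$, and subtract both from the energy equation. Relative to the GP derivation two things change: the viscous momentum flux $\polg = \polG + \bbf \otimes \bu$ is replaced by its symmetric counterpart $\polg^s \coloneqq \polG + \tfrac12(\bbf\otimes\bu + \bu\otimes\bbf)$, while the energy viscous flux remains $\bh + \polg \SCAL \bu + \polk \SCAL \bB$; and the nonconservative correction $\bsfF^E$, whose only nonzero component equals $\tfrac12(\DIV(\bu\otimes\bbf - \bbf\otimes\bu))\SCAL\bu$, is added to the energy equation. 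The key algebraic identity to verify is
\[
\tfrac12\bigl(\DIV(\bu\otimes\bbf - \bbf\otimes\bu)\bigr)\SCAL\bu \;=\; \bigl(\DIV(\polg^s - \polg)\bigr)\SCAL\bu,
\]
which follows immediately from $\polg^s - \polg = \tfrac12(\bu\otimes\bbf - \bbf\otimes\bu)$. Combined with the product rule $\DIV(\polg \SCAL \bu) = (\DIV\polg)\SCAL\bu + \polg:\GRAD\bu$, this ensures that $(\DIV\polg^s)\SCAL\bu$ minus the $\bsfF^E$ contribution equals $(\DIV\polg)\SCAL\bu$, so the viscous contribution $\Upsilon_{e,V}$ to the internal-energy equation coincides with the one appearing in the GP derivation. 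The Powell-source cancellation is untouched since $\Psi$ is unchanged and $\bsfF^E$ affects only the energy component, so the condition $\alpha_E - \alpha_{\bbm} - \alpha_{\bB} = 1$ still eliminates the $(\bu \SCAL \bB)(\DIV\bB)$ term, and Lemma~\ref{lemma:min_entropy_specific} holds verbatim for \eqref{eq:mhd_angular_Powell}.

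With the specific entropy equation in the same form as in the GP case, the remaining arguments transfer without extra work. Lemma~\ref{lemma:min_entropy_quadratic_form} and inequality \eqref{eq:resisitivity_sign} depend only on $\bbf$, $\bl$, $\polG = \mu\rho\GRAD^s\bu$, and $\polk$, all of which are unchanged; in particular $\polG:\GRAD\bu = \mu\rho|\GRAD^s\bu|^2 \geq 0$ still supplies the correct sign. Thus the proof of Theorem~\ref{thm:min_entropy} applies as written, and Theorem~\ref{thm:positivity_internal_energy} then follows because it uses only positivity of density combined with the minimum entropy principle. Finally, Theorem~\ref{thm:generalized_entropy} is obtained by multiplying the (unchanged) entropy equation by $\phi'(s)$ and combining with the (unchanged) mass equation, so it carries over as well. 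The only real obstacle is the tensor-algebra bookkeeping needed to establish the compensation identity for $\bsfF^E$; everything downstream is then formal.
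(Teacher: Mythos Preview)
Your proposal is correct and follows essentially the same route as the paper: both arguments observe that the mass equation is unchanged and that the anti-symmetric correction $\polg^s-\polg=\tfrac12(\bu\otimes\bbf-\bbf\otimes\bu)$ in the momentum viscous flux, after dotting with $\bu$, is exactly cancelled by the energy compensation $\bsfF^E$, so the internal-energy equation \eqref{eq:min_entropy_e} and hence Lemma~\ref{lemma:min_entropy_specific} are recovered verbatim. The paper phrases this by explicitly splitting the momentum viscous flux into the GP part plus the anti-symmetric remainder before multiplying by $\bu$, while you package the same computation as the identity $(\DIV\polg^s)\SCAL\bu-\bsfF^E_{\text{energy}}=(\DIV\polg)\SCAL\bu$; these are the same calculation.
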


\begin{proof}
Theorem~\ref{thm:positivity_density} follows because the mass equation remains the same in \eqref{eq:mhd_angular_Powell}.

We now revisit a key step in Lemma \ref{lemma:min_entropy_specific} with \eqref{eq:mhd_angular_Powell}. By rewriting the momentum equations as
\begin{align*}
&  \rho(\p_t\bu+\bu\SCAL\GRAD\bu) + \bu \DIV\bbf + \GRAD p - \DIV\bbetaa \\
&  - \DIV(\mu\rho\GRAD^s\bu+(\kappa\GRAD\rho)\otimes\bu)  - \DIV \l(  \frac{1}{2} \l(   \bu \otimes \kappa\GRAD\rho  - \kappa\GRAD\rho \otimes\bu \r) \r)= \alpha_\bbm \bB(\DIV\bB).
\end{align*}
Multiplying them with $\bu$, the magnetic equations with $\bB$, then subtracting them from the energy equation gives \eqref{eq:min_entropy_e}. Therefore, \eqref{eq:min_entropy_e} also holds for the $\text{GP}^s$-MHD system \eqref{eq:mhd_angular_Powell} and so the rest of the proof to Theorem~\ref{thm:min_entropy}, and so Theorem~\ref{thm:generalized_entropy}. Theorem~\ref{thm:positivity_internal_energy} holds since both Theorems~\ref{thm:positivity_density} and \ref{thm:min_entropy} hold.
\end{proof}

\begin{theorem} \label{thm:conservation}
The GP-MHD and $\text{GP}^s$-MHD system satisfy the following conservation properties:

\begin{enumerate}
\item Both systems conserve mass;
\item Both systems conserve magnetism if $\alpha_\bB = 0$;
\item Both systems conserve momentum if $\alpha_\bbm = 0$;
\item The GP-MHD system conserves total energy if $\alpha_E = 0$ but the $\text{GP}^s$-MHD system does not;
\item The $\text{GP}^s$-MHD system conserves angular momentum if $\alpha_\bbm = 0$ but the GP-MHD system does not.
\end{enumerate}

\end{theorem}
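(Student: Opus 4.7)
The plan is to treat all five items by the same mechanism: integrate the relevant conservation equation over $\mR^d$ and use the compact support of $\bu$ and $\bB$ (together with the convergence of $\rho$ and $e$ to constants outside a compact set, already assumed in Theorem~\ref{thm:min_entropy}) so that every pure-divergence term vanishes by the divergence theorem. What remains are the nonconservative source contributions from $\Psi$ and, in the $\text{GP}^s$ case, the non-divergence term $\bsfF^E$ from \eqref{eq:energy_compensation}.

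\emph{Items (1)--(3).} Integrating the mass, magnetic, and momentum equations gives
\begin{align*}
\frac{d}{dt}\int_{\mR^d}\rho\,d\bx &= 0, \\
\frac{d}{dt}\int_{\mR^d}\bB\,d\bx &= \alpha_{\bB}\int_{\mR^d}\bu\,(\DIV\bB)\,d\bx, \\
\frac{d}{dt}\int_{\mR^d}\bbm\,d\bx &= \alpha_{\bbm}\int_{\mR^d}\bB\,(\DIV\bB)\,d\bx,
\end{align*}
so each right-hand side vanishes whenever the indicated $\alpha$ is zero, giving (1), (2), and (3).

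\emph{Item (4).} The GP-MHD energy equation is a pure divergence plus the source $\alpha_E(\bu\SCAL\bB)(\DIV\bB)$, which yields conservation iff $\alpha_E=0$. For $\text{GP}^s$-MHD the extra term $\bsfF^E$ in \eqref{eq:energy_compensation} is not a divergence; integration by parts transforms its integral into
\[
\int_{\mR^d}\bsfF^E\,d\bx = -\tfrac{1}{2}\int_{\mR^d}\bigl(\bu\otimes\kappa\GRAD\rho-\kappa\GRAD\rho\otimes\bu\bigr):\GRAD\bu\,d\bx,
\]
which is generically nonzero, so total energy is not conserved for $\text{GP}^s$-MHD. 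A specific configuration with $\GRAD\rho$ misaligned from a non-uniform $\bu$ suffices to certify the non-conservation.

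\emph{Item (5).} The cleanest route is to observe that angular-momentum conservation reduces to symmetry of the total momentum-flux tensor. Writing the momentum equation in either system as $\p_t\bbm+\DIV\mathbf{T}=\alpha_{\bbm}\bB\,(\DIV\bB)$, integration by parts using $\p_l x_j=\delta_{lj}$ yields
\[
\frac{d}{dt}\int_{\mR^d}\epsilon_{kij}m_i x_j\,d\bx = \int_{\mR^d}\epsilon_{kij}\mathbf{T}_{ij}\,d\bx + \alpha_{\bbm}\int_{\mR^d}\epsilon_{kij}B_i(\DIV\bB)x_j\,d\bx.
\]
The advective flux $\bbm\otimes\bu$, pressure $p\polI$, Maxwell stress $\bbetaa$, and Newtonian piece $\mu\rho\GRAD^s\bu$ are all symmetric, so $\epsilon_{kij}$ annihilates their contribution. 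The only asymmetric piece in the GP viscous flux is $(\kappa\GRAD\rho)\otimes\bu$, whose antisymmetric part is generically nonzero, so the GP-MHD system does not conserve angular momentum. The $\text{GP}^s$ symmetrization $\tfrac{1}{2}\bigl((\kappa\GRAD\rho)\otimes\bu+\bu\otimes(\kappa\GRAD\rho)\bigr)$ eliminates this contribution entirely, so with $\alpha_{\bbm}=0$ the $\text{GP}^s$-MHD system conserves angular momentum.

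The routine parts are the conservation halves of (1)--(5) and the symmetry observation in (5); the more delicate points are the non-conservation halves in (4) and (5), which require either an explicit example or a short argument that the residual integrals do not vanish identically. I expect that to be the only non-mechanical piece of the proof.
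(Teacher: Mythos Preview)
Your proposal is correct and for items (1)--(4) it matches the paper's proof essentially verbatim: integrate the equation, invoke compact support so that pure divergences vanish, and identify the leftover source term.

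For item (5) your route is a genuine simplification of the paper's. The paper introduces the test function $\bphi_i \coloneqq \bx\times\be_i$, takes the inner product of the momentum equation with $\bphi_i$, and then works through each term individually---expanding the convection operators, integrating by parts on $\bu\SCAL\GRAD\bbm$ and $\bB\SCAL\GRAD\bB$, using the trilinear identity $(\bB\SCAL\GRAD\bphi_i,\bB)=\tfrac12((\GRAD\bphi_i+\GRAD\bphi_i^\top)\bB,\bB)=0$, and finally invoking the symmetric/antisymmetric contraction property on the viscous stress. Your approach bypasses all of this by writing the momentum equation once as $\p_t\bbm+\DIV\mathbf{T}=\alpha_{\bbm}\bB(\DIV\bB)$ and observing via $\p_l x_j=\delta_{lj}$ that the issue reduces to $\int\epsilon_{kij}\mathbf{T}_{ij}\,d\bx$, i.e.\ symmetry of the total stress tensor. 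This is cleaner and makes the structural reason for the $\text{GP}^s$ modification (symmetrizing the mass-diffusion piece) immediately transparent; the paper's test-function argument establishes the same fact but in a more hands-on, term-by-term fashion. Both arguments are equally rigorous, and your observation that the non-conservation halves of (4) and (5) need a generic counterexample is fair---the paper itself simply writes ``$\neq 0$'' without supplying one.
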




\begin{proof}
We divide the proof of this theorem into several parts. We follow a similar proof technique as \cgreen{\cite{Charnyi2017,lundgren2023fully}}. We denote the $L^2 \l( \mR^d \r) $ inner product as $(\cdot,\cdot)$.

\textbf{(i) Conservation of mass.}
The mass equation is the same for the GP-MHD and $\text{GP}^s$-MHD systems. Integrating the mass equation and using the divergence theorem, we obtain
\[
\partial_t \int_{\mR^d} \rho \ud \bx = 0.
\]
This shows that mass is conserved.

\textbf{(ii) Conservation of magnetism.}
The GP-MHD and $\text{GP}^s$-MHD system have the same magnetic equation. Taking the inner product of the magnetic equation with a constant unit vector $\be_i$ yields
\begin{align*}
  (\partial_t \bB , \be_i) + ( \DIV (  \bu \otimes \bB ) , \be_i) - ( \DIV (  \bB \otimes \bu ) , \be_i) - \l( \DIV \l( \eta \l( \nabla \bB - \nabla \bB^\top \r) \r) , \be_i  \r) \\
  = \alpha_\bB ((\DIV \bB) \bu, \be_i).
\end{align*}
Performing integration by parts on some of the terms yields
\begin{equation*}
(\partial_t \bB , \be_i) - (   \bu \otimes \bB  , \nabla \be_i) + (   \bB \otimes \bu  , \nabla \be_i) + \l( \eta \l( \nabla \bB - \nabla \bB^\top \r) , \nabla \be_i  \r) = \alpha_\bB ((\DIV \bB) \bu , \be_i).
\end{equation*}
Since $\be_i$ is a constant vector we obtain $ \partial_t \int_{\mR^d} \bB_i \ud \bx =  (\partial_t \bB , \be_i) = \alpha_\bB ((\DIV \bB) \bu, \be_i)$ which shows that magnetism is conserved if $\alpha_\bB = 0$.

\textbf{(iii) Conservation of momentum.}
We start with the GP-MHD system. Taking the inner product of the momentum equations with a constant unit vector $\be_i$ yields
\begin{align*}
(\partial_t \bbm, \be_i) - ( \bbm \otimes \bu  , \GRAD \be_i) - ( p, \DIV \be_i) + (\bB \otimes \bB , \GRAD \be_i)&\\ - \frac{1}{2} \l( |\bB|^2, \DIV \be_i \r) + \l(  \mu \rho \nabla^s \bu + (\kappa \nabla \rho) \otimes \bu , \GRAD \be_i  \r) & = \alpha_\bbm ( (\DIV \bB) \bB, \be_i  ).
\end{align*}
Since $\be_i$ is a constant vector we obtain $\partial_t \int_{\mR^d} \bbm_i \ud \bx = (\partial_t \bbm , \be_i) = \alpha_\bbm ( (\DIV \bB) \bB, \be_i  )$, which means that momentum is conserved if $\alpha_\bbm = 0$. The same steps can be repeated for the $\text{GP}^s$-MHD system.

\textbf{(iv) Conservation of total energy.}
We integrate the energy equation and \cgreen{use} the divergence theorem to obtain
\[
\int_{\mR^d}\p_t E \ud x = \alpha_E \int_{\mR^d}(\DIV\bB)(\bB\SCAL\bu) \ud x.
\]
This shows that total energy is conserved if $\alpha_E = 0$. Repeating the same steps for the $\text{GP}^s$-MHD system yields
\begin{align*}
  \int_{\mR^d}\p_tE\ud x & = \alpha_E \int_{\mR^d}(\DIV\bB)(\bB\SCAL\bu) \ud x \\
  & \quad+ \int_{\mR^d} \l(  \DIV \l(  \frac{1}{2} ( \bu \otimes (\kappa\nabla\rho) - (\kappa\nabla\rho) \otimes \bu ) \r) \r) \SCAL \bu \ud x \neq 0.
\end{align*}
This shows that total energy is not conserved for the $\text{GP}^s$-MHD system.

\textbf{(v) Conservation of angular momentum.}
We define $\bphi_i \coloneqq  \bx \CROSS \be_i $ so that angular momentum can be expressed as $\int_{\mR^d} (\bbm \CROSS \bx)_i \ud \bx = (\bbm, \bphi_i) $. We note that $\bphi_i$ has the property that $\DIV \bphi = 0$ and $ \GRAD \bphi_i + (\GRAD \bphi_i)^\top = 0 $.  We start by considering angular momentum conservation of the GP-MHD system. Taking the inner product of the momentum equations with a constant unit vector $\bphi_i$ and integrating over the domain yields 
\begin{equation*}
\begin{split}
(\partial_t \bbm, \bphi_i) + (\DIV ( \bbm \otimes \bu  ), \bphi_i) + (\GRAD p, \bphi_i) - (\DIV ( \bB \otimes \bB ), \bphi_i) + \frac{1}{2} (\GRAD |\bB|^2, \bphi_i) \\ - \l( \DIV \l( \mu \rho \nabla^s \bu + (\kappa \nabla \rho) \otimes \bu \r), \bphi_i  \r) = \alpha_\bbm ( (\DIV \bB) \bB, \bphi_i  ).
\end{split}
\end{equation*}
Expanding the convection operators yields
\begin{equation*}
\begin{split}
  (\partial_t \bbm, \bphi_i) + ( \bu \SCAL \nabla \bbm + \bbm (\DIV \bu)    , \bphi_i) - \l(   \bB \SCAL \nabla \bB + (\DIV \bB) \bB, \bphi_i \r)  \\
  + \l( \mu \rho \nabla^s \bu + (\kappa \nabla \rho) \otimes \bu , \nabla  \bphi_i  \r) = \alpha_\bbm ( (\DIV \bB) \bB, \bphi_i  ),
\end{split}
\end{equation*}
since $\DIV \bphi_i = 0$. Integration by parts on $\bB \SCAL \nabla \bB$ and $\bu \SCAL \nabla \bbm$ yields
\begin{equation} \label{eq:step_ang}
\begin{split}
  (\partial_t \bbm, \bphi_i)  - ( \bu \SCAL \nabla \bphi_i, \bbm) - (\bbm (\DIV \bu)    , \bphi_i) + (\bbm (\DIV \bu)    , \bphi_i)  + (   \bB \SCAL \nabla \bphi_i, \bB ) \\
  +( (\DIV \bB) \bB, \bphi_i ) - ( (\DIV \bB) \bB, \bphi_i )
  + \l( \mu \rho \nabla^s \bu + (\kappa \nabla \rho) \otimes \bu , \nabla  \bphi_i  \r) = \alpha_\bbm ( (\DIV \bB) \bB, \bphi_i  ).
\end{split}
\end{equation}
By using \eqref{eq:trilinear definition} it can be shown that
\begin{equation*}
\begin{split}
(   \bB \SCAL \nabla \bphi_i, \bB ) &= \frac{1}{2}(   \bB \SCAL \nabla \bphi_i, \bB ) + \frac{1}{2}(   \bB \SCAL \nabla \bphi_i, \bB ) = \frac{1}{2} ( (\GRAD \bphi_i)^\top \bB, \bB ) + \frac{1}{2} ( (\GRAD \bphi_i)^\top \bB, \bB ) \\ & = \frac{1}{2} ( (\GRAD \bphi_i)^\top \bB, \bB ) + \frac{1}{2} ( (\GRAD \bphi_i) \bB, \bB ) = 0,
\end{split}
\end{equation*}
since $(\GRAD \bphi_i)^\top + \GRAD \bphi_i = 0.$ The same procedure can be used to show that  $(   \bu \SCAL \nabla \bphi_i, \bbm ) = (   \bu \SCAL \nabla \bphi_i, \bu \rho ) = 0$. Inserting this into \eqref{eq:step_ang} gives
\begin{equation}
\begin{split}
  (\partial_t \bbm, \bphi_i) + \l( \mu \rho \nabla^s \bu, \bphi_i \r) + \l(  (\kappa \nabla \rho) \otimes \bu , \nabla  \bphi_i  \r) = 0.
\end{split}
\end{equation}
Using that the contraction between a symmetric and anti-symmetric matrix with zero diagonal is zero \cite[Ch 11.2.1]{Larson_2013}, one can show that
\begin{equation} \label{eq:symmetry_trick}
\l(\frac{1}{2} \mu \rho  \l( \nabla \bu +  (\nabla \bu)^\top \r) , \nabla \bphi_i \r) = \frac{1}{2}  \l( \frac{1}{2} \mu \rho \l( \nabla \bu + \l( \nabla \bu \r)^\top \r) , \nabla \bphi_i + (\nabla \bphi_i )^\top \r).
\end{equation}
Since $\nabla \bphi_i = - \nabla \bphi_i^\top$ we are left with
\begin{equation*}
\begin{split}
  (\partial_t \bbm, \bphi_i) =\alpha_\bbm ( (\DIV \bB) \bB, \bphi_i  ) - \l(  (\kappa \nabla \rho) \otimes \bu , \nabla  \bphi_i  \r) \neq 0,
\end{split}
\end{equation*}
even if $\alpha_\bbm = 0$. Thus the GP-MHD system does not conserve angular momentum.

We now consider the $\text{GP}^s$-MHD system. Repeating the same steps as before we are instead left with
\begin{equation*}
\begin{split}
  (\partial_t \bbm, \bphi_i) = \alpha_\bbm ( (\DIV \bB) \bB, \bphi_i  ) - \frac{1}{2}  \l( (\kappa \nabla \rho) \otimes \bu  + \l( (\kappa\nabla\rho)\otimes\bu \r)^\top, \nabla  \bphi_i  \r) .
\end{split}
\end{equation*}
Next, we use the same trick as performed in \eqref{eq:symmetry_trick} to obtain
\begin{equation*} 
\begin{split}
  (\partial_t \bbm, \bphi_i)
  &= \alpha_\bbm ( (\DIV \bB) \bB, \bphi_i  ) - \frac{1}{4} \l( (\kappa \nabla \rho) \otimes \bu  + \l( (\kappa\nabla\rho)\otimes\bu \r)^\top, \nabla  \bphi_i + (\nabla  \bphi_i)^\top  \r) \\
  &= \alpha_\bbm ( (\DIV \bB) \bB, \bphi_i  ),
\end{split}
\end{equation*}
since $\nabla \bphi_i = - \nabla \bphi_i^\top$. Thus, angular momentum is conserved for the $\text{GP}^s$-MHD system if $\alpha_\bbm = 0$. \cblue{A numerical validation of this property can be found in Section~\ref{sec:numerical_results:angular_momentum}}.
\end{proof}

\section{Numerical investigation}\label{sec:numerical_results}
In this section, we demonstrate \cgreen{several} numerical properties of the GLM-GP-MHD system \eqref{eq:mhd_GP_Powell_GLM} and GLM-$\text{GP}^s$-MHD sytem \eqref{eq:mhd_angular_Powell}. The continuous Lagrange finite elements are used to discretize space and a fourth-order SSP explicit Runge-Kutta method \cite{Ruuth_2006} is used to discretize time.
\subsection{Finite element discretization}
Instead of $\mR^d$ in the continuous analysis, the computational domain is an open bounded domain $\Omega\subset\mR^d$. We consider a finite partition of the bounded polytope $\overline{\Omega}\approx\Omega$ into a finite number of disjoint simplex elements $K$, $\overline{\Omega}=\uplus K$ such that no vertex of any element is hanged on an edge of another element. At a fixed time $t$, we aim to find the finite element solution $\bsfU_h(t)\coloneqq(\rho_h,\bbm_h,E_h,\bB_h)^\top$ in $\bcalW_h$, $\bcalW_h\coloneqq\calQ_h\CROSS\bcalV_h\CROSS\calQ_h\CROSS\bcalV_h$, $\bcalV_h=[\calQ_h]^d$, and
\[
\calQ_h\coloneqq\{v(x) : v\in\calC^0(\overline{\Omega}), v|_{K}\in\polP_k,\;\forall K\},
\]
where $\polP_k$ is the space of Lagrange polynomials of $k$ degrees. A Galerkin finite element formulation for the GP-MHD system \eqref{eq:mhd_GP_Powell} reads: find $\bsfU_h(t)\in\calC^1([0,t]; \bcalW_h)$ such that
\begin{align*}
& \left(\p_t\bsfU_h,\bsfV_h\right)
+\left(\DIV(\bsfF_{\calE}(\bsfU_h)+\bsfF_{\calB}(\bsfU_h)),\bsfV_h\right)\\
& =
-\left(\bsfF_{\calV}^{\text{GP}}(\bsfU_h),\GRAD\bsfV_h\right) + \left(\bn\SCAL\bsfF_{\calV}(\bsfU_h),\bsfV_h\right)_{\p\Omega} + \left(\Psi,\bsfV_h\right),
\end{align*}
for all $\bsfV_h\in\bcalW_h$. Derivation for the other extensions such as GLM-GP-MHD, $\text{GP}^s$-MHD is done similarly. For the numerical experiments, we consider an ideal gas with the following equation of state,
\begin{equation}\label{eq:equation_of_state_ideal}
p = \rho T = (\gamma-1)\rho e.
\end{equation}

\subsection{Residual-based viscosity (RV)}\label{sec:RV}
We employ the residual viscosity method \cite{Nazarov_2013} for our finite element discretization where the proposed viscous flux works as a stabilization term. The RV method is used to scale the viscosity coefficients such that the discretization is stable while still being high-order accurate in space. Several benefits of the residual viscosity method are: (i) provable convergence to the unique entropy solution using implicit time-stepping; (ii) arbitrary high-order accuracy for smooth solutions \cite{Dao2022b}; (iii) being simple to construct and implement.

At every nodal point $i$, the viscosity coefficient is computed as
\[
\e_{h,i} \coloneqq
   \min\left(
  \frac12 h_i|\lambda_{\max,i}|,C_Eh_i^2\
   |R_i |
  \right),
\]
where $h_i$ is a local mesh size indicator, that is a nodal value of the mesh function $h_h \in \calQ_h$ computed by the following projection problem: Find $h_h \in \calQ_h$ such that 
\[
(h_h,v) + \sum_{K\in \calT_h}(|K|^{2/d} \GRAD h_h, v)_K = (k^{-1}|K|^{1/d}, v), \quad \forall v \in \calQ_h
\]
where $|K|$ is the \cgreen{volume} of the element $K$, $k$ is the polynomial order. Moreover $\lambda_{\max,i}$ is the exact or an estimated upper bound of the local maximum wave speed, \cblue{$C_E$ is a scaling number}, and $R_i$ is the nodal value of the PDE residual. Unless stated, $C_E$ is set to 1.0. See \cite{Dao2022b} for technical details and experimental results of the RV method. The different viscosities are set as $\kappa=\mu=\eta=\e_h$, see the unit analysis in \cite{Dao2022b}.
\subsection{1D contact wave problem}\label{sec:contact_line}
Contact discontinuities are an interesting phenomenon in gas dynamics. A contact wave or a contact line is a solution to \eqref{eq:mhd1} in which the velocity $\bu(\bx,t) = \bu_0$, the magnetic field $\bB(\bx,t) = \bB_0$, and the pressure $p(\bx,t)=p_0$ are constant but the density is a discontinuity. This section demonstrates some advantages of the GP flux over the more commonly used resistive MHD flux. The resistive MHD flux is defined as
\begin{equation}\label{eq:resistive_mhd_flux}
\bsfF_{\calV}^{\text{r}}(\bsfU)\coloneqq
\begin{pmatrix}
0 \\
2\mu\GRAD^s\bu+\lambda\DIV\bu\polI \\
(2\mu\GRAD^s\bu+\lambda\DIV\bu\polI) \bu + \kappa_T\GRAD T + \eta \big( \GRAD \bB - \GRAD \bB^\top \big)  \bB\\
\eta \big( \GRAD \bB - \GRAD \bB^\top \big)
\end{pmatrix}.
\end{equation}
Assume that the contact wave is a solution of \eqref{eq:mhd1} regularized by the resistive MHD flux. The density is a discontinuous function which satisfies the mass equation
\[
\p_t\rho + \DIV(\rho\bu_0) = 0.
\]
Inserting $\bu_0,\bB_0,p_0,\rho$ into the energy equation gives
\begin{equation}\label{eq:resistive_mhd_flux_contact_E}
\p_t(\rho e)+\DIV(\bu_0\rho e)-\DIV(\kappa_T\GRAD T) = 0.
\end{equation}
Consider the simplest case of ideal gases, due to \eqref{eq:equation_of_state_ideal}, if the pressure is a constant, then $\rho e$ is also a constant. This observation shows that \eqref{eq:resistive_mhd_flux_contact_E} holds only if the thermal diffusivity $\kappa_T\equiv 0$. Apart from having no physical meaning, zero thermal diffusivity leads to spurious oscillations of the numerical solutions, see \cite{Dao2022a,Nazarov_Larcher_2017}.

We repeat the above analysis on the GP flux. The density solution instead satisfies
\[
\p_t\rho + \DIV(\rho\bu_0) = \DIV(\kappa\GRAD\rho).
\]
Again, we insert $\bu_0,\bB_0,p_0,\rho$ into the energy equation to obtain
\[
\p_t(\rho e)+\DIV(\bu_0\rho e)-\DIV(\kappa\GRAD(\rho e))+\frac12\bu_0^2(\p_t\rho+\DIV(\rho\bu_0)-\DIV(\kappa\GRAD\rho))=0
\]
which holds trivially when substituting the ideal equation of state \eqref{eq:equation_of_state_ideal}.

We experiment with this example numerically. A contact wave profile is taken from \cite{Dao2022a}. Consider the domain $\Omega = [0, 1]$. The gas constant and initial components are
\begin{align*}
  \gamma&=2.0,\\
  \bu_0&=(0.5915470932,-1.5792628803)^\top,\\
  p_0&=0.5122334291,\\
  \bB_0&=(0.75,-0.5349102426)^\top.
\end{align*}
The density is initially set to be $\rho_{0,\text{L}}=0.7156521382$ on the left half and $\rho_{0,\text{R}}=0.2348529760$ on the right half of $\Omega$. With an aim to get bound-preserving solutions, we use first order viscosity $\e_{h}$ with the nodal values $\e_i=\frac12h_i|\lambda_{\max,i}|$ for all the viscosity coefficients. For the resistive MHD flux, the bulk viscosity coefficient $\lambda$ is set to be zero. The lumped mass matrix is used because it is impossible to get bound-preserving solutions with the consistent mass matrix \cite{Guermond_Popov_Yang_2017}. \cblue{For this reason, $\polP_2$ solutions are excluded in this test because of the zero entries in the resulting lumped mass.} \cblue{The numerical solutions at time $t=0.1$ is plotted in Figure~\ref{fig:contact_rho}. The meshes are chosen such that the $\polP_1$ and $\polP_3$ nodes exactly match: 61 DOFs using 60 $\polP_1$ elements or 20 $\polP_3$ elements, and 601 DOFs using 600 $\polP_1$ elements or 200 $\polP_3$ elements. Note that, since the initial discontinuous solutions are interpolated, there are bound violations in between the nodal points at the initial stage for the $\polP_3$ solutions. However, at the final time, the $\polP_3$ solutions exactly satisfy the maximum principle. The general behavior of $\polP_1$ and $\polP_3$ solutions do not differ significantly. Solutions by both fluxes converge toward the reference solution. The solutions obtained by the resistive MHD flux violate both the lower and upper bounds whereas the GP/GP$^s$ flux can capture the contact line without violating bounds in all resolutions.} The discrete minimum entropy $\min_{\Omega}s_h$ is plotted in Figure~\ref{fig:contact_min_s}. The finite element solutions using the GP flux satisfy the minimum entropy principle to machine precision. However, the resistive MHD flux does not result in the same behavior.

     

\begin{figure}[!ht]
  \centering
  \includegraphics[width=0.49\textwidth]{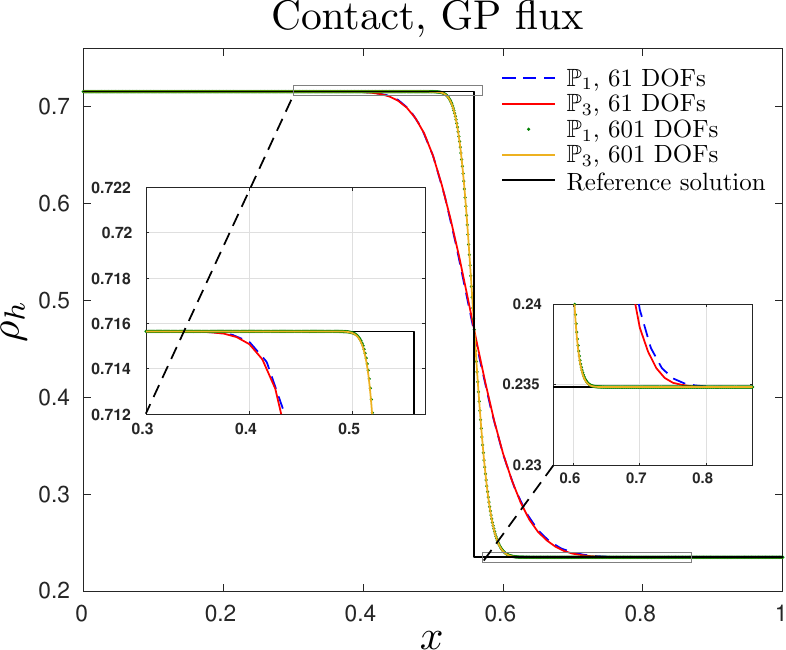}
  \includegraphics[width=0.49\textwidth]{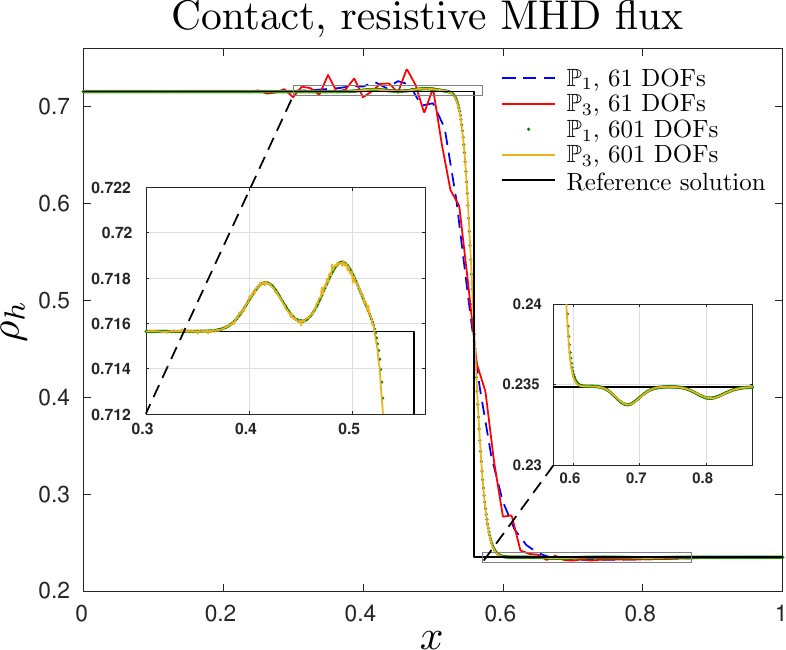}
  \caption{\cblue{The resistive MHD flux violates the maximum principle around the location of the contact line. First order viscosity, $\polP_1$ and $\polP_3$ elements. Left: GP flux; $\polP_1$ and $\polP_3$ solutions preserve bounds exactly. Right: resistive MHD flux; for visibility only the solutions with 601 DOFs are shown in the zoomed-in plots. Note that GP and GP$^s$ are same in 1D.}}
  \label{fig:contact_rho}
\end{figure}

\begin{figure}[h!]
     \centering
     \begin{subfigure}{0.49\textwidth}
         \centering
         \includegraphics[width=\textwidth,viewport=95 448 486 778, clip=true]{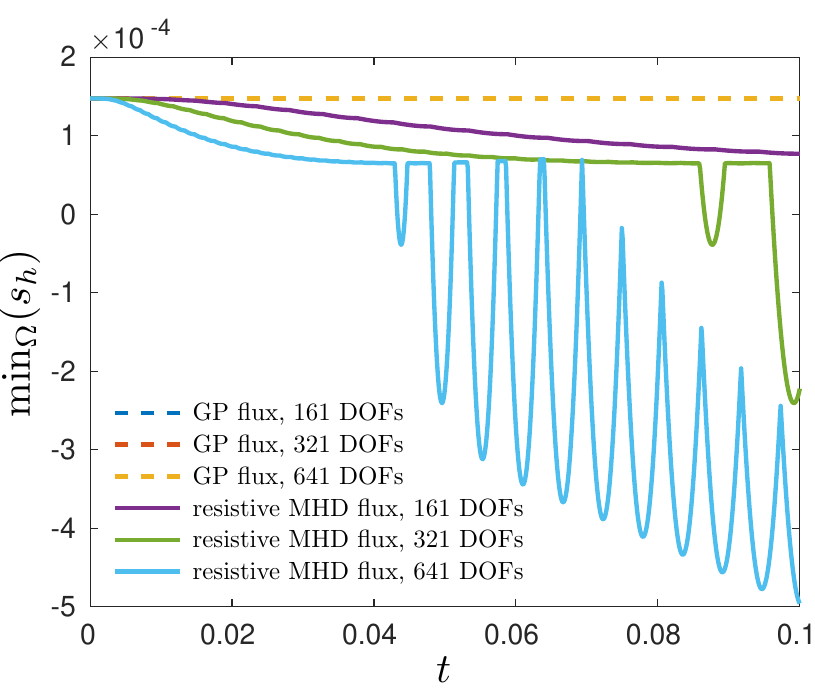}
     \end{subfigure}
     \caption{Investigation of discrete minimum entropy principle on the contact line problem. First order viscosity, $\polP_1$ elements. The GP/GP$^s$ flux preserves $\min_{\Omega}(s_h)$ to machine precision. The resistive MHD flux produces unphysical behaviors in $\min_{\Omega}(s_h)$.}
     \label{fig:contact_min_s}
\end{figure}

\subsection{Accuracy test}
We demonstrate the high-order accuracy of the RV method using the smooth vortex problem \cite{Wu_2018b}. We describe our setup in \cite{Dao2022a,Dao2022b}. The convergence results are presented in Table~\ref{table:convergence_vortex_P1} for $\polP_1$, $\polP_2$ and $\polP_3$ elements. Second order accuracy is observed in the case of $\polP_1$ elements. Fourth order is obtained in the case of $\polP_3$ elements. However, we also obtain second order solutions using $\polP_2$ elements, which might be considered suboptimal with respect to the polynomial degree. It turns out that this numerical behavior is expected for the continuous Galerkin finite elements and is explained in \cite{Ainsworth_2014,Dao2022b}. The errors between GP and GP$^s$ fluxes are identical in the tables. The reason is that the discrepancy between the two are small, and the viscosity coefficients are of order $\calO(h^{p+1})$ for smooth solutions. To compare the GP and GP$^s$ fluxes in presence of shocks, we include the Orszag-Tang benchmark in Section~\ref{sec:OT}. 

\subsection{Brio-Wu problem}
In this section, the popular 1D MHD benchmark by \cite{Brio_Wu_1988} is experimented. We aim to test the convergence of the method in presence of discontinuities. Since an exact solution to this problem is not known, and since different exact Riemann solvers do not agree on the existence of intermediate shocks, we employ the numerical solution of the Athena code \cite{Stone_2008} using 10001 grid points as the reference solution. Solutions at different levels using $\polP_1$ polynomials are compared in Figure~\ref{fig:briowu}. The zoomed-in plots show the convergence behavior of the numerical method. The convergence rates are reported in Table~\ref{table:convergence_briowu}. The obtained errors converge to zero at first order in L$^1$-norm and half an order in L$^2$. \cblue{In the Brio-Wu tests, we set the high-order viscosity scaling number $C_E$ to be 5.0 to reduce the numerical oscillations for better illustrations at low resolution. We note that letting $C_E=1.0$ does not break convergence, and tuning $C_E$ does not change the convergence rates. Nevertheless, ensuring absolute no oscillations requires the use of a positivity-preserving scheme which respects local bounds of every nodal update for fully discrete scheme, see e.g., \cite{dao2023structure}, which is not the focus of this paper. A comparison of the GP flux and the resistive MHD flux for this problem for different polynomial degrees is shown in Section~\ref{sec:briowu_GP_vs_resistive}. 
}

\begin{figure}[h!]
     \centering
     \begin{subfigure}{0.49\textwidth}
         \centering
         \includegraphics[width=\textwidth]{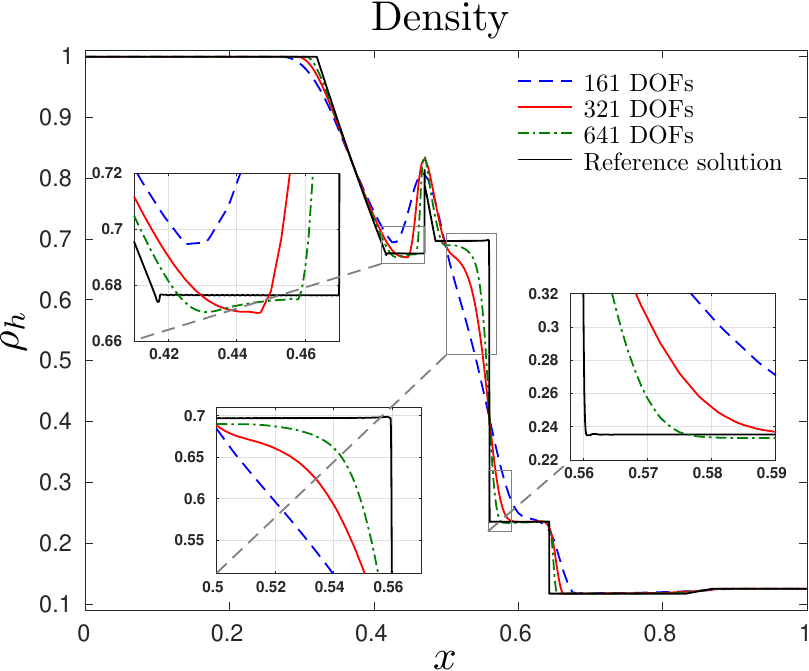}
     \end{subfigure}
     \hfill
     \begin{subfigure}{0.49\textwidth}
         \centering
         \includegraphics[width=\textwidth]{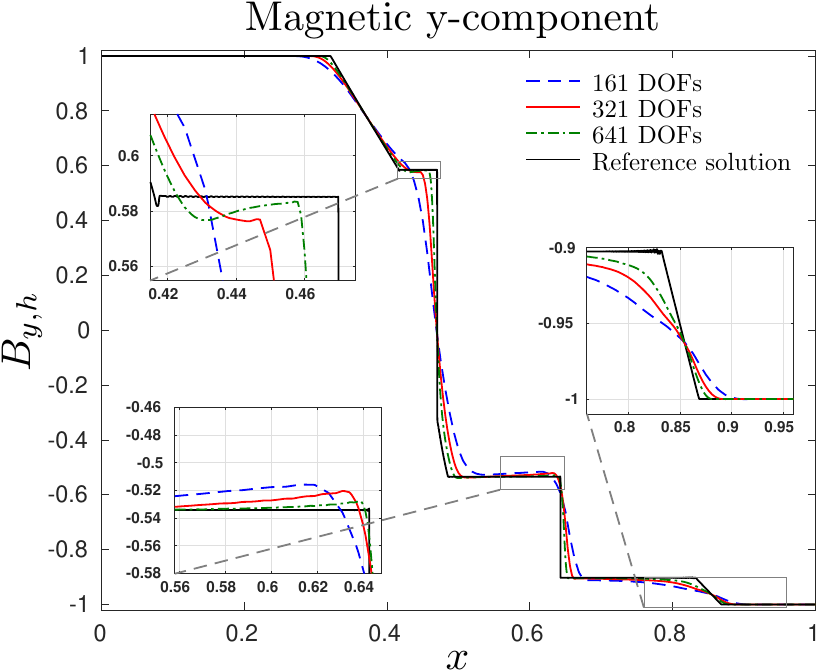}
     \end{subfigure}
     \caption{Solution to the Brio-Wu problem using $\polP_1$ polynomials, GP/GP$^s$ flux, and residual viscosity. End time $\widehat t=0.1$.}
     \label{fig:briowu}
\end{figure}

\subsection{GEM magnetic reconnection challenge}
Magnetic reconnection is an interesting phenomenon in MHD and plays an important role in understanding many processes in astrophysics and plasma physics. To compare different MHD models, a test problem was proposed by \cite{Birn_2001}. Our aim is to check if the magnetic reconnection behaviors of the new viscous models are similar to the resistive MHD model. Only the results by the GP flux is shown in this test because the difference to the GP$^s$ flux is negligible. The computational domain is a rectangle $[-L_x/2,L_x/2]\times[-L_y/2,L_y/2]$ with the lengths $L_x=25.6,L_y=12.8$. The initial density profile is
\[
\rho_0 = \frac{1}{\cosh(2x)\cosh(2y)}+0.2.
\]
The initial velocity is zero, $\bu_0=(0,0)^\top$. The initial pressure is set as $p_0=\frac12\rho$. The initial magnetic field is,
\begin{align*}
  \bB &= (B_x,B_y),\\
  B_x &= \tanh(2y)+\delta B_x,\\
  B_y &= \delta B_y,\\
  \delta B_x &= \frac{-0.1\pi}{L_y}\sin\left(\frac{\pi y}{L_y}\right)\cos\left(\frac{2\pi x}{L_x}\right),\\
  \delta B_y &= \frac{0.2\pi}{L_x}\sin\left(\frac{2\pi x}{L_x}\right)\cos\left(\frac{\pi y}{L_y}\right).
\end{align*}
The domain is periodic in the $x$-direction. Slip boundary condition is strongly imposed on the top and the bottom boundaries. The magnetic reconnection is measured by \cite{Rueda-Ramirez_2021,Birn_2001},
\[
\bbf_{\mathrm{rec}}(t)=\frac{1}{2} \int_{-L_x / 2}^{L_x / 2}\left|B_y(x, y=0, t)\right| \ud x.
\]
The physical viscosities are $\kappa_{\mathrm{physical}}=0$, $\mu_{\mathrm{physical}} = 0$, $\eta_{\mathrm{physical}} = 5\times10^{-3}$. To incorporate the artificial viscosity, for every node $i$ we simply choose numerical $\eta_i$ to be
\[
\eta_i = \max(\eta_{\mathrm{physical}}, \eta_{\mathrm{RV},i}),
\]
where $\eta_{\mathrm{RV},i}$ is the nodal value of the artificial viscosity by the RV method. We use $1000\times1000$ $\polP_1$ nodes for the solution plots in Figure~\ref{fig:GEM_rho_B}. Even with the physical viscosities, for the given resolution, the time step restriction from the advective flux is still dominant. Therefore, the explicit RK time stepping is used without any changes. The magnetic reconnection is measured over time and is plotted in Figure~\ref{fig:GEM_reconnection}. From the figure, we can see that the reconnection rates are very similar between the two viscous models: resistive MHD and GP. Moreover, our result closely matches the resistive MHD results \cred{produced by the FV/DGSEM scheme \cite{Rueda-Ramirez_2021} with mesh size $1024\times512$, degree 7 polynomials which has higher resolution than our setup.}

In addition, we investigate the case that the physical resistivity is set to zero to which we expect no reconnection to happen. Indeed, the reconnection remains unchanged for a period of time at the beginning of the simulation. However, this state is broken when the instabilities appear which drastically shoot up the reconnection measurement after $t = 28$. This behavior is shown by the blue dotted line in Figure~\ref{fig:GEM_reconnection}. \cblue{We omit the results of higher-order polynomials because under the same number of DOFs they produce nearly identical lines to the $\polP_1$ solution.}

\begin{figure}[!ht]
     \centering
     \begin{subfigure}{0.49\textwidth}
         \centering
         \includegraphics[width=\textwidth]{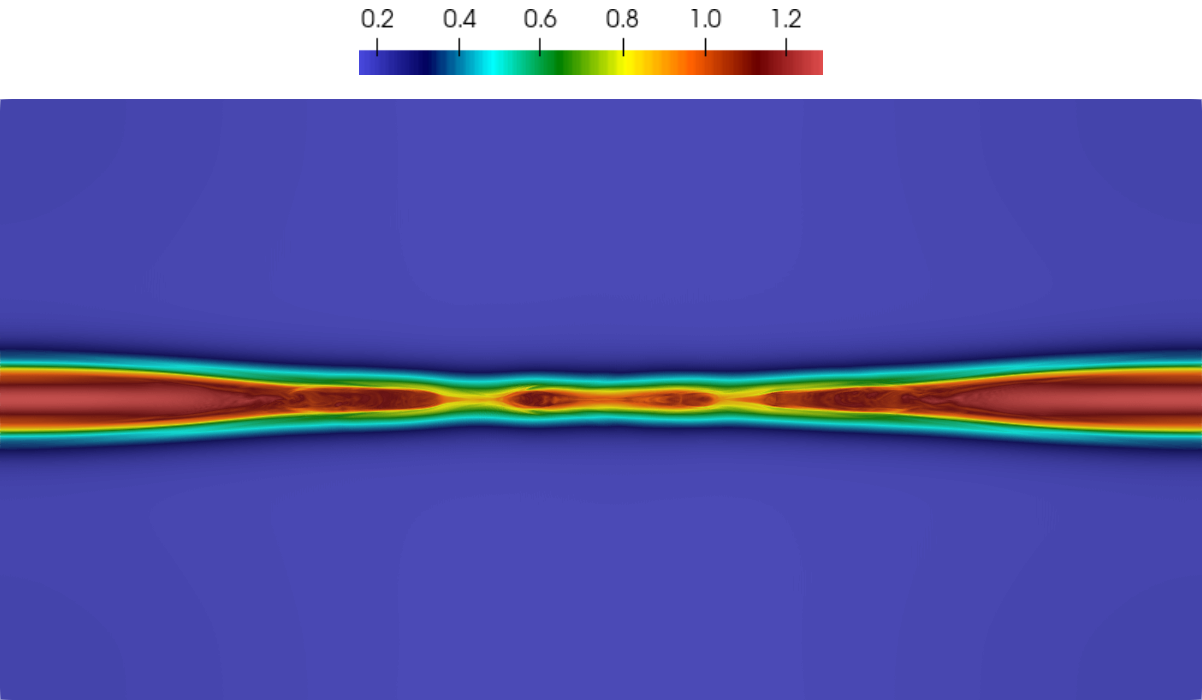}
         \caption{Density $\rho_h$, $\eta=0$}
     \end{subfigure}
     \hfill
     \begin{subfigure}{0.49\textwidth}
         \centering
         \includegraphics[width=\textwidth]{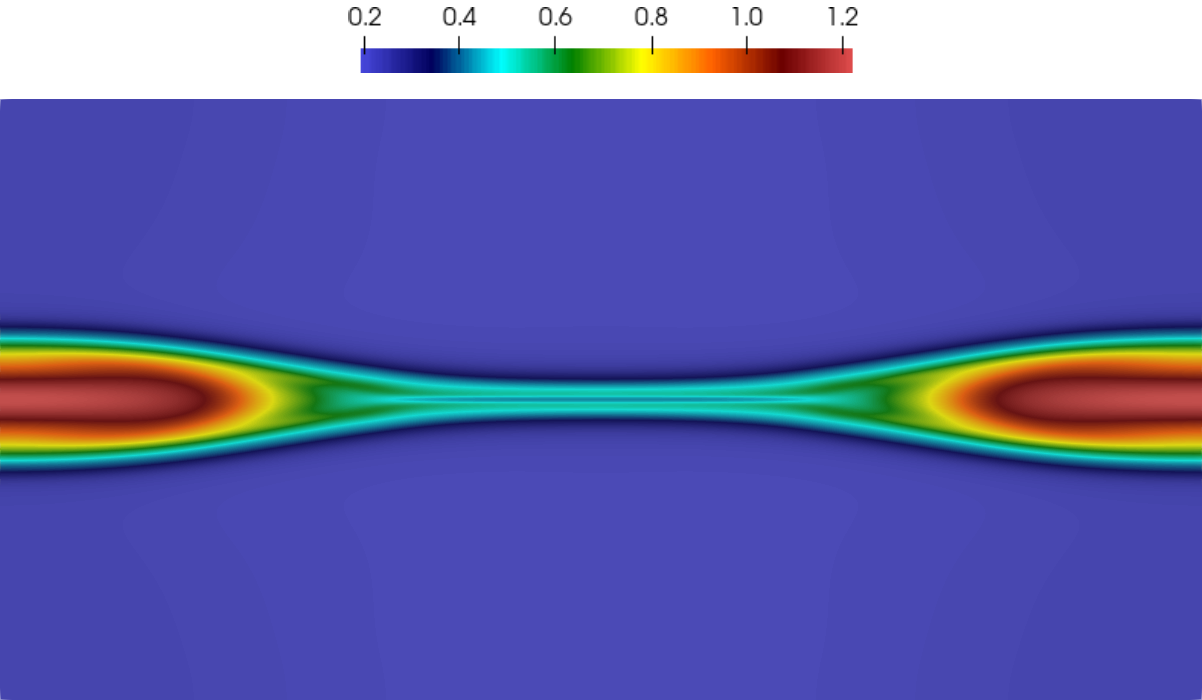}
         \caption{Density $\rho_h$, $\eta=5$E-3}
     \end{subfigure}
     \hfill
     \begin{subfigure}{0.49\textwidth}
         \centering
         \includegraphics[width=\textwidth]{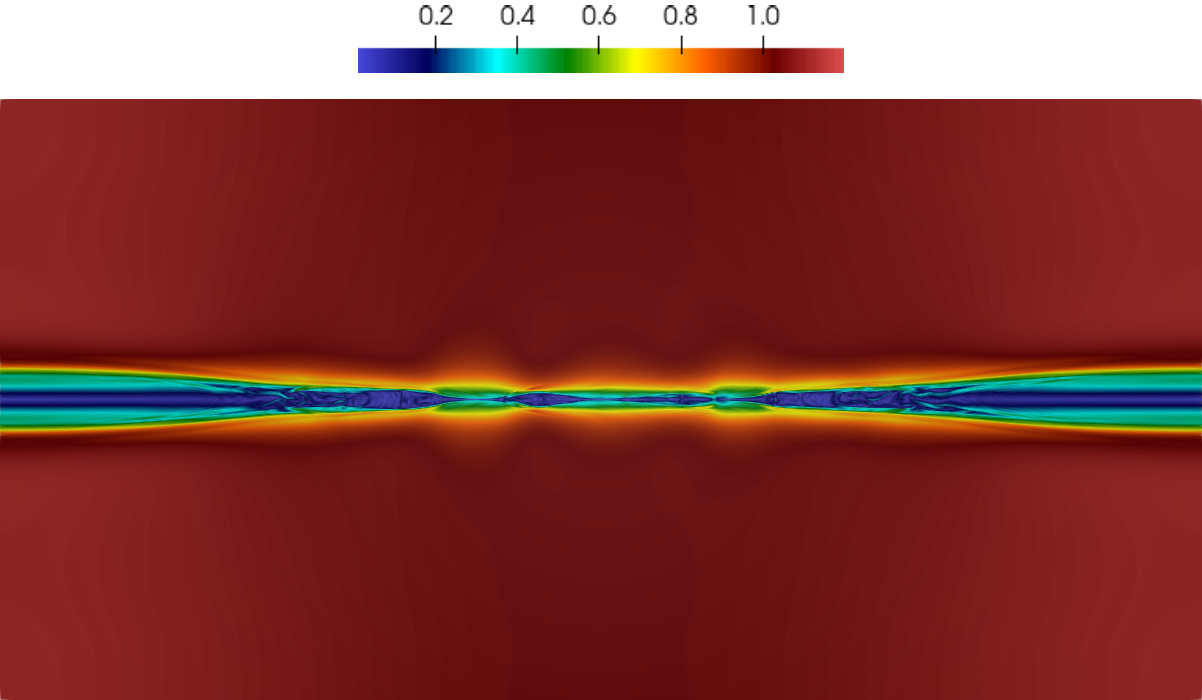}
         \caption{Magnetic strength $\vert\bB_h\vert$, $\eta=0$}
     \end{subfigure}
     \hfill
     \begin{subfigure}{0.49\textwidth}
         \centering
         \includegraphics[width=\textwidth]{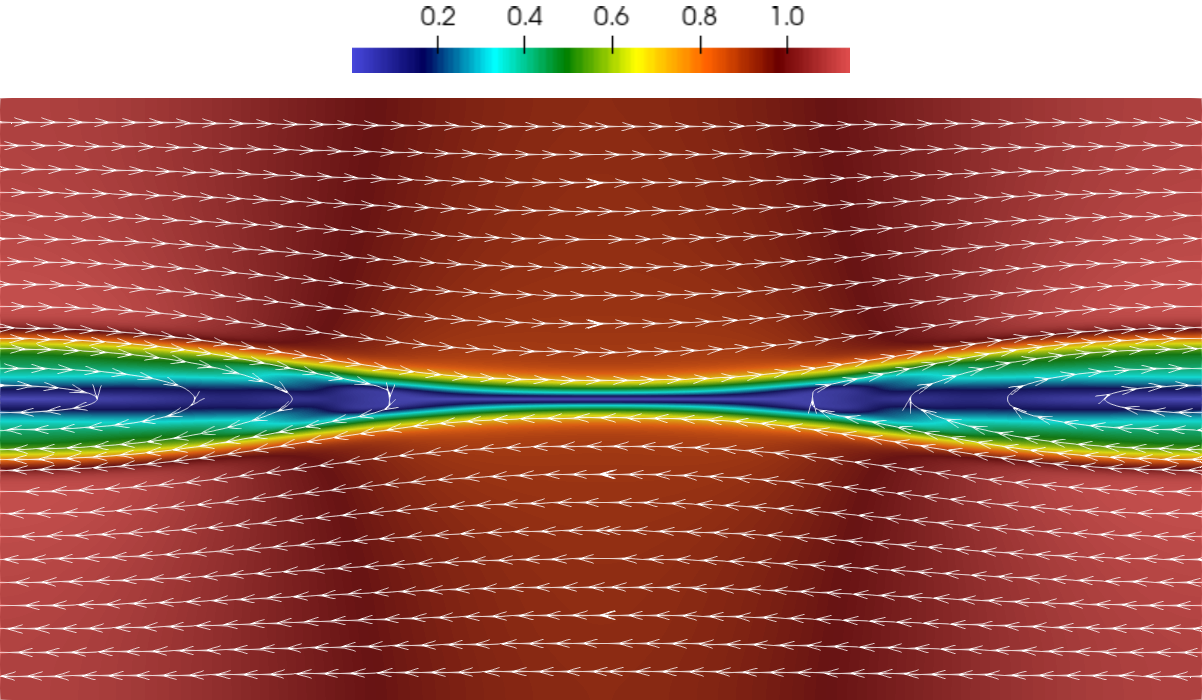}
         \caption{Magnetic strength $\vert\bB_h\vert$, $\eta=5$E-3}
     \end{subfigure}
     \caption{Solutions at time $t=40$ for the GEM challenge.}
     \label{fig:GEM_rho_B}
\end{figure}

\begin{figure}[!ht]
     \centering
     \includegraphics[width=0.6\textwidth,viewport=100 454 460 747, clip=true]{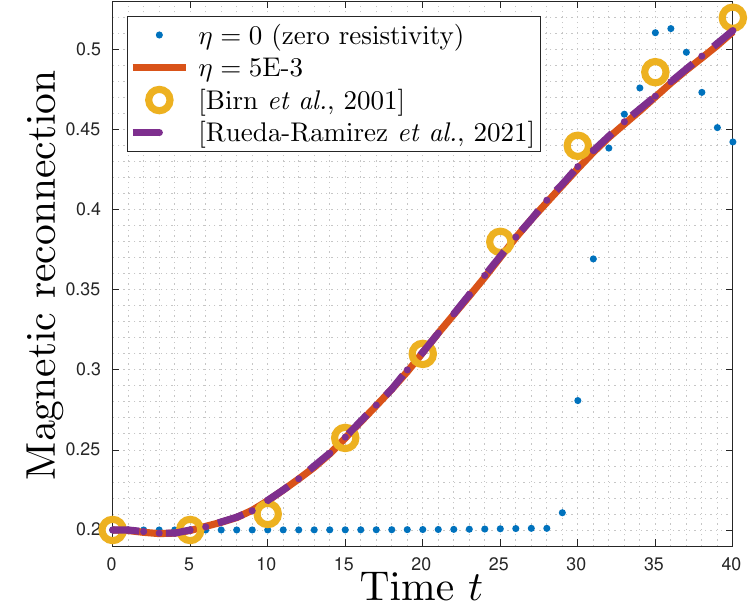}
     \caption{Magnetic reconnection rate for the GEM challenge with $\eta=0$ (zero resistivity) and $\eta=5$E$-3$. The results are compared with two other references.}
     \label{fig:GEM_reconnection}
\end{figure}






\section{Summary and conclusion}\label{sec:summary}

In this paper, we have investigated several options for viscous regularization of the ideal MHD equations. We summarize our findings in Tables~\ref{table:conclusion_viscous_model}, \ref{table:conclusion_divergence} and \ref{table:conclusion_GLM}. Table~\ref{table:conclusion_viscous_model} presents the properties of the different viscous models when the magnetic divergence is assumed to be zero. When nonzero divergence is considered, properties of different divergence source terms are presented in Table~\ref{table:conclusion_divergence} and properties of the GLM methods are compared in Table~\ref{table:conclusion_GLM}. In Table~\ref{table:conclusion_GLM}, $\checkmark^*$ refers to that the GLM method conserves $E^*$.

The proposed viscous regularization is numerically experimented with an artificial viscosity finite element method. The numerical results show that: (i) high-order accuracy is obtained for smooth solutions; (ii) shocks and other discontinuities are finely captured; (iii) the numerical behaviors are strongly aligned with the continuous analysis; (iv) the proposed viscous flux behaves similarly to the resistive MHD flux in a test with physical viscosity.

\section*{Acknowledgments}
Some computations were performed on UPPMAX provided by the Swedish National Infrastructure for Computing (SNIC) under project number SNIC 2021/22-233.


\newcommand \subWidth{17mm}

\begin{landscape}
\begin{table}
    \centering
    \caption{Summary: viscosity/resistivity models assuming $\DIV \bB = 0$. \cred{The conserved quantities are defined in Section~\ref{sec:conservativeness}}.}
    \label{table:conclusion_viscous_model}
    \begin{adjustbox}{max width=1.5\textwidth}
    \begin{tabular}{|c||c|c|c|c|c|c|c|c|c|c|}
    \hline
    \multirow{2}{*}{Equations} & \multirow{2}{*}{$\rho, e > 0$} & \multirow{2}{*}{Entropy ineq.}  & \multirow{2}{*}{Galilean inv.} & \multirow{2}{*}{Rotational inv.} & \multicolumn{6}{c|}{Conservativeness} \\ \cline{6-11}
    {} & {} & {} & {} & {} & $\rho$ & $\bbm$ & $\bB$ & $E$& $\bbm \CROSS \bx$ & $\bA \SCAL \bB$\\ \hline\hline
    Ideal MHD          &               &               & \checkmark & \checkmark & \checkmark & \checkmark & \checkmark & \checkmark & \checkmark & \checkmark \\ \hline
    Resistive MHD      &               &               & \checkmark & \checkmark & \checkmark & \checkmark & \checkmark & \checkmark & \checkmark &            \\ \hline
    Monolithic-MHD \cite{Dao2022b}     & \checkmark    & \checkmark    & \checkmark & \checkmark & \checkmark & \checkmark & \checkmark & \checkmark &            &            \\ \hline
    GP-MHD             & \checkmark    & \checkmark    & \checkmark & \checkmark & \checkmark & \checkmark & \checkmark & \checkmark &            &            \\ \hline
    $\text{GP}^s$-MHD  & \checkmark    & \checkmark    &            & \checkmark & \checkmark & \checkmark & \checkmark &            & \checkmark &             \\ \hline
    \end{tabular}
    \end{adjustbox}
    \caption{Summary: divergence source terms $\Psi$ when $\DIV \bB \neq 0$}
    \label{table:conclusion_divergence}
    \begin{adjustbox}{max width=1.5\textwidth}
    \begin{tabular}{|c||c|c|c|c|c|c|c|c|}
    \hline
    \multirow{2}{*}{Equations} & \multirow{2}{*}{Entropy ineq.}  & \multirow{2}{*}{Galilean inv.} & \multirow{2}{*}{Rotational inv.} & \multicolumn{5}{c|}{Conservativeness} \\ \cline{5-9}
    {} & {} & {} & {} & $\rho$ & $\bbm$ & $\bB$ & $E$& $\bbm \CROSS \bx$ \\ \hline\hline
    $\Psi(\alpha_{\bbm},\alpha_E,\alpha_{\bB})$  & $\alpha_E-\alpha_{\bbm}-\alpha_{\bB}=1$ & $\alpha_{\bbm}=\alpha_{E},\alpha_{\bB}=-1$  & \checkmark & \checkmark & $\alpha_{\bbm}=0$ & $\alpha_{\bB}=0$ & $\alpha_{E}=0$ & $\alpha_{\bbm}=0$ \\ \hline
    $\Psi_{\text{Powell}}=\Psi(-1,-1,-1)$  \cite{Powell_et_al_1999}     & \checkmark                            & \checkmark                               & \checkmark &    \checkmark  &  &  &   &      \\ \hline
    $\Psi_{\text{Janhunen}}=\Psi(0,0,-1)$  \cite{Janhunen_2000}      & \checkmark                            & \checkmark                               & \checkmark & \checkmark  & \checkmark &   & \checkmark &  \checkmark \\ \hline
    $\Psi_{\text{BB}}=\Psi(-1,0,0)$   \cite{Brackbill_Barnes_1980}     & \checkmark                            &                                    & \checkmark &  \checkmark  &  & \checkmark  &\checkmark & \\ \hline
     $\Psi(0,1,0)$                            & \checkmark                            &                                          & \checkmark & \checkmark  &  \checkmark &  \checkmark &  & \checkmark \\ \hline
    \end{tabular}
    \end{adjustbox}
    \caption{Summary: properties of the GLM}
    \label{table:conclusion_GLM}
    \begin{adjustbox}{max width=1.5\textwidth}
    \begin{tabular}{|c||c|c|c|c|c|c|c|c|c|}
    \hline
    & \multirow{2}{*}{Div. cleaning} & \multirow{2}{*}{Entropy ineq.}  & \multirow{2}{*}{Galilean inv.} & \multirow{2}{*}{Rotational inv.} & \multicolumn{5}{c|}{Conservativeness} \\ \cline{6-10}
        {} & {} & {} & {} & {} & $\rho$ & $\bbm$ & $\bB$ & $E$& $\bbm \CROSS \bx$ \\ \hline\hline
        No GLM & & \checkmark & \checkmark  & \checkmark &  \checkmark &  \checkmark  & \checkmark   &  \checkmark & \checkmark \\ \hline
        Galilean invariant extended GLM \cite{Dedner_et_al_2002} & \checkmark & \checkmark & \checkmark  & \checkmark & \checkmark  &  \checkmark  &  \checkmark   &   & \checkmark \\ \hline
        Galilean invariant GLM 9 waves \cite{Derigs_2018} & \checkmark & \checkmark & \checkmark  & \checkmark & \checkmark  &  \checkmark  &  \checkmark   &   & \checkmark \\ \hline
        Energy conservative GLM (Section~\ref{Sec:energy_GLM})  & \checkmark & \checkmark &   & \checkmark & \checkmark  &  \checkmark  &  \checkmark   & \checkmark*  & \checkmark \\ \hline
    \end{tabular}
    \end{adjustbox}
\end{table}
\end{landscape}

\newpage
\appendix

\section{Useful integration rules}\label{appendix:useful_rules}
Let $\bu, \bw, \bv \in \bH^1\l(\mR^d \r) = \l[ H^1 \l( \mR^d\r) \r]^d$ where $H^1\l( \mR^d\r)$ is a Hilbert space. Using the definition of the $L^2$-inner product $( \SCAL , \SCAL )$ and $( \GRAD \bu) \coloneqq \partial_{x_i} u_j $ one can show the following relation
\begin{equation} \label{eq:trilinear definition}
(\bu \SCAL \GRAD \bv, \bw) =\l( ( \GRAD \bv)^\top \bu, \bw\r) = ( (\GRAD \bv) \bw , \bu).
\end{equation}
The following relations hold due to integration by parts
\begin{align}
(\bu \SCAL \GRAD \bv, \bw) = -( (\DIV \bu) \bv, \bw  ) - ( \bu \SCAL \GRAD \bw, \bv ), \label{eq:IBP} \\
( \nabla \CROSS \bu, \bv ) = ( \bu, \nabla \CROSS \bv ). \label{eq:IBP cross}
\end{align}

\section{Proof of Galilean and rotational invariance of \eqref{eq:mhd_GP_Powell}}
\subsection{Galilean invariance}\label{appendix:galilean_invariance}
For readability, the analysis is done in 2D. Extension to 3D is trivial. Consider a shift of the time-space frame from $(t, x_1, x_2)$ to $(\tau,\xi_1,\xi_2)$. The inertial frame $(\tau,\xi_1,\xi_2)$ moves with a constant velocity of $V$, $|V| \ll c$ along the $x_1$-direction relatively to the reference frame $(t, x_1, x_2)$. A particle moving at velocity $\bu = (u_1, u_2)^\top$ in $(t, x_1, x_2)$ is observed as moving at velocity $\bv = (v_1, v_2)^\top$ in $(\tau,\xi_1,\xi_2)$,
\[
\begin{array}{lll}
  \tau = t, & \xi_1 = x_1 - Vt, & \xi_2 = x_2,\\
  & v_1 = u_1 - V, & v_2 = u_2.
\end{array}
\]
Of physical relevance, we consider the magnetic limit of Galilean electromagnetism, see \cite{Montigny_2006}. The displacement current is neglected in the non-relativistic MHD \eqref{eq:mhd1}, yielding that the magnetic field $\bB = (b_1, b_2)^\top$ stays invariant under the Galilean transformation.
By partial derivative rules, we have
\[
\begin{array}{lll}
\p_t = \p_\tau-V\p_{\xi_1}, & \p_{x_1} = \p_{\xi_1}, & \p_{x_2} = \p_{\xi_2}.
\end{array}
\]
To begin with, we consider the mass equation,
\[
\p_t\rho + \p_{x_1}(\rho u_1) + \p_{x_2}(\rho u_2) = \p_{x_1}(\kappa\p_{x_1}\rho)+\p_{x_2}(\kappa\p_{x_2}\rho)
\]
Rewrite the equation in the new reference frame,
\[
\p_{\tau}\rho-V\p_{\xi_1}\rho+\p_{\xi_1}(\rho (v_1+V)) + \p_{\xi_2}(\rho v_2) = \p_{\xi_1}(\kappa\p_{\xi_1}\rho)+\p_{\xi_2}(\kappa\p_{\xi_2}\rho),
\]
which can be simplified as
\[
\p_{\tau}\rho+\p_{\xi_1}(\rho v_1) + \p_{\xi_2}(\rho v_2) = \p_{\xi_1}(\kappa\p_{\xi_1}\rho)+\p_{\xi_2}(\kappa\p_{\xi_2}\rho).
\]
The mass equation is Galilean invariant because it stays the same in $(\tau,\xi_1,\xi_2)$.

We continue with the momentum equation,
\[
\p_t\begin{pmatrix}
\rho u_1\\
\rho u_2
\end{pmatrix}
+
\p_{x_1}\begin{pmatrix}
  \rho u_1^2+p\\
  \rho u_1u_2
\end{pmatrix}
+
\p_{x_2}\begin{pmatrix}
  \rho u_1u_2\\
  \rho u_2^2+p
\end{pmatrix}
+
\p_{x_1}\begin{pmatrix}
  \frac12 |\bB|^2-b_1^2\\
  -b_1 b_2
\end{pmatrix}
+
\p_{x_2}\begin{pmatrix}
  -b_1 b_2\\
  \frac12|\bB|^2 -b_2^2
\end{pmatrix}
=
\]
\[
\p_{x_1}\begin{pmatrix}
  \mu\rho\p_{x_1}u_1+\kappa(\p_{x_1}\rho)u_1\\
  \mu\rho\frac{\p_{x_1}u_2+\p_{x_2}u_1}{2}+\kappa(\p_{x_1}\rho)u_2
\end{pmatrix}
+
\p_{x_2}\begin{pmatrix}
  \mu\rho\frac{\p_{x_1}u_2+\p_{x_2}u_1}{2}+\kappa(\p_{x_2}\rho)u_1\\
  \mu\rho\p_{x_2}u_2+\kappa(\p_{x_2}\rho)u_2\\
\end{pmatrix}
\]
\[
+\begin{pmatrix}
-b_1(\p_{x_1}b_1+\p_{x_2}b_2)\\
-b_2(\p_{x_1}b_1+\p_{x_2}b_2)
\end{pmatrix}.
\]
The magnetic contribution stays invariant, meaning that
\[
\begin{array}{cc}
\p_{x_1}\begin{pmatrix}
  \frac12 |\bB|^2-b_1^2\\
  -b_1 b_2
\end{pmatrix}
=
\p_{\xi_1}\begin{pmatrix}
  \frac12 |\bB|^2-b_1^2\\
  -b_1 b_2
\end{pmatrix},
&
\p_{x_2}\begin{pmatrix}
  -b_1 b_2\\
  \frac12|\bB|^2 -b_2^2
\end{pmatrix}
=
\p_{\xi_2}\begin{pmatrix}
  -b_1 b_2\\
  \frac12|\bB|^2 -b_2^2
\end{pmatrix},
\end{array}
\]
\[
\begin{pmatrix}
-b_1(\p_{x_1}b_1+\p_{x_2}b_2)\\
-b_2(\p_{x_1}b_1+\p_{x_2}b_2)
\end{pmatrix}
=
\begin{pmatrix}
-b_1(\p_{\xi_1}b_1+\p_{\xi_2}b_2)\\
-b_2(\p_{\xi_1}b_1+\p_{\xi_2}b_2)
\end{pmatrix}.
\]
The left hand side of the first momentum equation without the magnetic terms reads
\[
\begin{array}{c}
  \p_t(\rho u_1) + \p_{x_1}(\rho u_1^2+p)+\p_{x_2}(\rho u_1 u_2)\\
  = \p_{\tau}(\rho(v_1+V))-V\p_{\xi_1}(\rho(v_1+V))+\p_{\xi_1}(\rho(v_1+V)^2+p)+\p_{\xi_2}(\rho(v_1+V)v_2)\\
  = \p_{\tau}(\rho v_1) + \p_{\xi_1}(\rho v_1^2)+\p_{\xi_2}(\rho v_1v_2) + V (\p_\tau\rho+\p_{\xi_1}(\rho v_1)+\p_{\xi_2}(\rho v_2))\\
  = \p_{\tau}(\rho v_1) + \p_{\xi_1}(\rho v_1^2)+\p_{\xi_2}(\rho v_1v_2) + V(\p_{\xi_1}(\kappa\p_{\xi_1}\rho)+\p_{\xi_2}(\kappa\p_{\xi_2}\rho)).
\end{array}
\]
The right hand side of the first momentum equation without the magnetic terms is
\[
\begin{array}{c}
\p_{x_1}(\mu\rho\p_{x_1}u_1+\kappa(\p_{x_1}\rho)u_1)
+
\p_{x_2}\left(\mu\rho\frac{\p_{x_1}u_2+\p_{x_2}u_1}{2}+\kappa(\p_{x_2}\rho)u_1\right)\\
=
\p_{\xi_1}(\mu\rho\p_{\xi_1}(v_1+V)+\kappa(\p_{\xi_1}\rho)(v_1+V))\\
+
\p_{\xi_2}\left(\mu\rho\frac{\p_{\xi_1}v_2+\p_{\xi_2}(v_1+V)}{2}+\kappa(\p_{\xi_2}\rho)(v_1+V)\right)\\
= \p_{\xi_1}(\mu\rho\p_{\xi_1}v_1+\kappa(\p_{\xi_1}\rho)v_1)
+
\p_{\xi_2}\left(\mu\rho\frac{\p_{\xi_1}v_2+\p_{\xi_2}v_1}{2}+\kappa(\p_{\xi_2}\rho)v_1\right) \\
+V(\p_{\xi_1}(\kappa\p_{\xi_1}\rho)+\p_{\xi_2}(\kappa\p_{\xi_2}\rho)).
\end{array}
\]
The term $V(\p_{\xi_1}(\kappa\p_{\xi_1}\rho)+\p_{\xi_2}(\kappa\p_{\xi_2}\rho))$ appearing on both sides of the equation are cancelled. We obtain the same equation in the new coordinates. Similar derivation can be done on the second momentum equation. Therefore, the momentum equation is also Galilean invariant.

For convenience, we split the energy equation into four parts, $\text{LHS}_1^{x_1,x_2}$ $+$ $\text{LHS}_2^{x_1,x_2}$ $=$ $\text{RHS}_1^{x_1,x_2}$ $+$ $\text{RHS}_2^{x_1,x_2}$, where
\begin{align*}
  \text{LHS}_1^{x_1,x_2} & = \p_t E + \p_{x_1}(u_1(E+p))+\p_{x_2}(u_2(E+p)),\\
  \text{LHS}_2^{x_1,x_2} & = \p_{x_1}\left(\left(\frac12|\bB|^2- b_1^2\right)u_1-b_1b_2u_2\right) + \p_{x_2}\left(-b_1b_2u_1+\left(\frac12|\bB|^2-b_2^2\right)u_2\right),\\
  \text{RHS}_1^{x_1,x_2} & = \p_{x_1}(\kappa\p_{x_1}(\rho e)) + \p_{x_2}(\kappa\p_{x_2}(\rho e))\\
  & + \p_{x_1}(\frac12|\bu|^2\kappa\p_{x_1}\rho) + \p_{x_2}(\frac12|\bu|^2\kappa\p_{x_2}\rho)\\
  & +\p_{x_1}(\mu\rho u_1\p_{x_1}u_1+\mu\rho\frac12(\p_{x_1}u_2+\p_{x_2}u_1)u_2) \\
  & +\p_{x_2}(\mu\rho \frac12(\p_{x_1}u_2+\p_{x_2}u_1)u_1+\mu\rho u_2\p_{x_2}u_2) \\
  & + \p_{x_1}(\eta b_2(\p_{x_1}b_2-\p_{x_2}b_1)) \\
  & + \p_{x_2}(\eta b_1(\p_{x_x}b_1-\p_{x_1}b_2)), \\
  \text{RHS}_2^{x_1,x_2} & = -(u_1b_1+u_2b_2)(\p_{x_1}b_1+\p_{x_2}b_2).
\end{align*}
Applying \eqref{eq:e} on $E$ gives
\[
\begin{array}{rl}
  \text{LHS}_1^{x_1,x_2} = & \text{LHS}_1^{\xi_1,\xi_2}\\
  &+ \frac12 V^2\left[\p_\tau\rho+\p_{\xi_1}(\rho v_1)+\p_{\xi_2}(\rho v_2)\right]\\
  &+ V \left[\p_{\tau}(\rho v_1)+\p_{\xi_1}(\rho v_1^2+p)+\p_{\xi_2}(\rho v_1 v_2)\right],\\
   \text{LHS}_2^{x_1,x_2} = & \text{LHS}_2^{\xi_1,\xi_2} +V\left[\p_{\xi_1}(\frac12|\bB|^2- b_1^2)-\p_{\xi_2}(b_1b_2)\right],\\
  \text{RHS}_1^{x_1,x_2} = & \text{RHS}_1^{\xi_1,\xi_2} \\
  & + \frac12V^2\left[\p_{\xi_1}(\kappa\p_{\xi_1}\rho)+\p_{\xi_2}(\kappa\p_{\xi_2}\rho)\right]\\
  & + V \big[\p_{\xi_1}(v_1\kappa\p_{\xi_1}\rho)+\p_{\xi_2}(v_1\kappa\p_{\xi_2}\rho)\\
    & \quad\quad+\p_{\xi_1}(\mu\rho\p_{\xi_1}v_1)+\frac12\p_{\xi_2}(\mu\rho\p_{\xi_2}v_1+\p_{\xi_1}v_2)\big],\\
  \text{RHS}_2^{x_1,x_2} = & \text{RHS}_2^{\xi_1,\xi_2} - Vb_1(\p_{x_1}b_1+\p_{x_2}b_2).
\end{array}
\]
The terms associated with $\frac12 V^2$ are cancelled due to the mass equation. The terms associated with $V$ are canceled due to the momentum equation. Therefore, the energy equation is also Galilean invariant.

Finally, we consider the magnetic equation,
\[
\p_t\begin{pmatrix}
b_1\\
b_2
\end{pmatrix}
+
\p_{x_1}
\begin{pmatrix}
  0\\
  u_1b_2-u_2b_1
\end{pmatrix}
+
\p_{x_2}
\begin{pmatrix}
  u_2b_1-u_1b_2\\
  0
\end{pmatrix}
=
\]
\[
\p_{x_1}\begin{pmatrix}
  0\\
  \eta(\p_{x_1}b_2-\p_{x_2}b_1)
\end{pmatrix}
+\p_{x_2}\begin{pmatrix}
  \eta(\p_{x_2}b_1-\p_{x_1}b_2)\\
  0
\end{pmatrix}
+\begin{pmatrix}
-u_1(\p_{x_1}b_1+\p_{x_2}b_2)\\
-u_2(\p_{x_1}b_1+\p_{x_2}b_2)
\end{pmatrix}.
\]
Let us use $\text{LHS}^{x_1,x_2}, \text{RHS}^{x_1,x_2}$ as common notations to respectively denote the left hand side and the right hand side as functions of the spatial variables $x_1,x_2$. The first magnetic equation can be written as
\[
\begin{array}{rl}
\text{LHS}^{x_1,x_2} = & \p_{\tau}b_1- V\p_{\xi_1}b_1+\p_{\xi_2}(v_2b_1-(v_1+V)b_2)\\
  = & \text{LHS}^{\xi_1,\xi_2} - V\p_{\xi_1}b_1-V\p_{\xi_2}b_2,\\
\text{RHS}^{x_1,x_2} = & \eta(\p_{\xi_2}b_1-\p_{\xi_1}b_2)-(v_1+V)(\p_{\xi_1}b_1+\p_{\xi_2}b_2)\\
  = & \text{RHS}^{\xi_1,\xi_2} - V\p_{\xi_1}b_1-V\p_{\xi_2}b_2.
\end{array}
\]
The second magnetic equation can be written as
\[
\begin{array}{rl}
\text{LHS}^{x_1,x_2} = & \p_{\tau}b_2- V\p_{\xi_1}b_2+\p_{\xi_1}((v_1+V)b_2-v_2b_1)\\
  = & \text{LHS}^{\xi_1,\xi_2}-V\p_{\xi_1}b_2+V\p_{\xi_1}b_2,\\
\text{RHS}^{x_1,x_2} = & \eta(\p_{\xi_2}b_1-\p_{\xi_1}b_2)-v_2(\p_{\xi_1}b_1+\p_{\xi_2}b_2)\\
  = & \text{RHS}^{\xi_1,\xi_2}.
\end{array}
\]
Therefore, the magnetic equation is Galilean invariant. It can be seen that without the Powell term, the system would not be Galilean invariant. This was mentioned earlier by \cite{Powell_et_al_1999}.
\subsection{Rotational invariance}\label{appendix:rotational_invariance} Consider the same equations in a new system of coordinates $\bxi=(\xi_1,\xi_2,\xi_3)^\top$ by the rotation $\bxi=\bR\bx$. The observation frame is rotated through an angle $\theta_y$ around the $x_2-$axis by the rotation matrix $\bR_\psi$ and through an angle $\theta$ around the $x_3-$axis by $\bR_\theta$,
\[
\bR_\psi = 
\begin{pmatrix}
\cos\psi & 0 & \sin\psi\\
0 & 1 & 0\\
-\sin\psi & 0 & \cos\psi
\end{pmatrix}, \;
\bR_\theta = 
\begin{pmatrix}
\cos\theta & \sin\theta & 0\\
-\sin\theta & \cos\theta & 0\\
0 & 0 & 1
\end{pmatrix}.
\]
The full rotation matrix $\bR$ is
\[
\bR\coloneqq\bR(\psi,\theta)=\bR_\psi\bR_\theta=\begin{pmatrix}
\cos\psi\cos\theta & \cos\psi\sin\theta & \sin\psi\\
-\sin\theta & \cos\theta & 0\\
-\sin\psi\cos\theta & -\sin\psi\sin\theta & \cos\psi
\end{pmatrix}.
\]
It is clear that $\bR$ is an orthogonal matrix $\bR^{-1}=\bR^\top$. For a given state, the conserved variable vector $\bsfU_0=(\rho_0,\bbm_0,E_0,\bB_0)^\top$ is rotated to $\bT\bsfU_0$ by the transformation matrix
\[
\bT = \begin{pmatrix}
  1 & \bzero^\top & 0 & \bzero^\top\\
  \bzero & \bR & \bzero & \polO\\
  0 & \bzero^\top & 1 & \bzero^\top\\
  \bzero & \polO & \bzero & \bR
\end{pmatrix},
\]
where $\bzero$ is the zero vector of size 3 and $\polO$ is the $3\times3$ zero matrix.
The relation of the derivatives is
\[
\begin{pmatrix}\p_{\xi_1}\\\p_{\xi_2}\\\p_{\xi_3}\end{pmatrix}
=\bR
\begin{pmatrix}\p_{x_1}\\\p_{x_2}\\\p_{x_3}\end{pmatrix}
\text{ or }
\begin{pmatrix}\p_{x_1}\\\p_{x_2}\\\p_{x_3}\end{pmatrix}
=\bR^\top
\begin{pmatrix}\p_{\xi_1}\\\p_{\xi_2}\\\p_{\xi_3}\end{pmatrix}.
\]
Rotational invariance of \eqref{eq:mhd1} means that \eqref{eq:mhd1} does not change under the above rotation,
\begin{equation}\label{eq:mhd_rotational_invariance}
\p_t(\bT\bsfU)+\p_{\xi_1}\bsfF_1(\bT\bsfU)+\p_{\xi_2}\bsfF_2(\bT\bsfU)+\p_{\xi_3}\bsfF_3(\bT\bsfU)=0,
\end{equation}
where $\bsfF(\bsfU) \coloneqq \bsfF_{\calE}(\bsfU)+\bsfF_{\calB}(\bsfU)$, and $\bsfF_1(\bsfU), \bsfF_2(\bsfU), \bsfF_3(\bsfU)$ are $x_1-$, $x_2-$, $x_3-$ components of $\bsfF$.
Because $\p_t(\bT\bsfU) = \bT\p_t(\bsfU)$, substituting \eqref{eq:mhd1} for $\p_t(\bsfU)$ in \eqref{eq:mhd_rotational_invariance} gives
\[
\bT\left(\bR^\top\begin{pmatrix}\p_{\xi_1}\\\p_{\xi_2}\\\p_{\xi_3}\end{pmatrix}\right)\SCAL\begin{pmatrix}\bsfF_1(\bsfU)\\\bsfF_2(\bsfU)\\\bsfF_3(\bsfU)\end{pmatrix} = \p_{\xi_1}\bsfF_1(\bT\bsfU)+\p_{\xi_2}\bsfF_2(\bT\bsfU)+\p_{\xi_3}\bsfF_3(\bT\bsfU).
\]
We now group the terms associating to $\p_{\xi_1}$ to get an equivalent statement to \eqref{eq:mhd_rotational_invariance}. Without loss of generality, the following theorem states rotational invariance of \eqref{eq:mhd1} in the form of \cite[Theorem 1]{Billett_Toro_1998}.
\begin{lemma}\label{lemma:mhd_rotational_invariance}
The nonlinear advective MHD flux $\bsfF(\bsfU)$ satisfies
\[
\bR_{11}\bsfF_1(\bsfU)+\bR_{12}\bsfF_2(\bsfU)+\bR_{13}\bsfF_3(\bsfU)=\bT^{-1}\bsfF_1(\bT\bsfU).
\]
\end{lemma}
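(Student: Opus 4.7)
The plan is to verify the stated identity block by block in the conserved-variable space, exploiting the block-diagonal structure $\bT^{-1}=\diag(1,\bR^\top,1,\bR^\top)$, the orthogonality $\bR^\top\bR=\polI$, and the elementary identity $\sum_{j=1}^{3}\bR_{1j}\be_j=\bR^\top\be_1$, which makes any linear combination of the form $\sum_j\bR_{1j}T\be_j$ collapse to the single matrix-vector product $T\bR^\top\be_1$.

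First I would record how the primitive variables and the tensorial building blocks of $\bsfF_\calE+\bsfF_\calB$ transform under $\bsfU\mapsto\bT\bsfU$. Because $\rho'=\rho$, $E'=E$, and orthogonality yields $|\bu'|^2=|\bu|^2$ and $|\bB'|^2=|\bB|^2$, the definition \eqref{eq:e} together with the equation of state \eqref{eq:equation_of_state} give $e'=e$ and $p'=p$. Using $\bbm'=\bR\bbm$, $\bu'=\bR\bu$, $\bB'=\bR\bB$, the three rank-two tensors that appear in the advective flux all conjugate the same way,
\[
(\bbm\otimes\bu)'=\bR(\bbm\otimes\bu)\bR^\top,\quad \bbetaa'=\bR\bbetaa\bR^\top,\quad (\bu\otimes\bB-\bB\otimes\bu)'=\bR(\bu\otimes\bB-\bB\otimes\bu)\bR^\top,
\]
where the middle identity uses $\polI=\bR\polI\bR^\top$ to absorb the isotropic $-\tfrac12|\bB|^2\polI$ term.

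With these transformation laws in hand, I would verify each of the four blocks. The mass block reduces to the scalar identity $\sum_j\bR_{1j}m_j=(\bR\bbm)_1$, which matches the $\rho$-component of $\bT^{-1}\bsfF_1(\bT\bsfU)=\bsfF_1(\bT\bsfU)$ directly. For the momentum and magnetic blocks the LHS becomes $T\bR^\top\be_1$ with $T$ the corresponding rank-two tensor $\bbm\otimes\bu+p\polI-\bbetaa$ or $\bu\otimes\bB-\bB\otimes\bu$, while the RHS produces $\bR^\top(\bR T\bR^\top)\be_1$, and the two collapse to the same expression by $\bR^\top\bR=\polI$. The energy block is a scalar check using $p'=p$, $E'=E$, $\bu'=\bR\bu$ together with $(\bR\bbetaa\bu)_1=(\bR\bbetaa\bR^\top\bR\bu)_1$.

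The proof is essentially a bookkeeping exercise rather than a conceptual one; the key nontrivial observation is that each tensor-valued piece of the advective flux conjugates as $T\mapsto\bR T\bR^\top$ under the simultaneous rotation of momentum and magnetic field, which is precisely what allows the outer $\bR$ produced on the RHS by the tensor transformation law to cancel the $\bR^\top$ coming from $\bT^{-1}$, leaving exactly the $T\bR^\top\be_1$ that already appeared on the LHS. I anticipate that the main source of clerical difficulty will be consistently tracking whether a given $\bR$ multiplies from the left or the right and keeping the four blocks visually separated, rather than any genuine mathematical obstruction.
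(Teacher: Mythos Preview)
Your proposal is correct and is essentially the same approach as the paper's, which merely states that ``direct substitution can be used to verify the equality'' without giving details. Your block-by-block organization via the conjugation law $T\mapsto\bR T\bR^\top$ and the identity $\sum_j\bR_{1j}\be_j=\bR^\top\be_1$ is in fact cleaner than what the paper supplies; it is the same computation, only made transparent.
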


Direct substitution can be used to verify the equality in Lemma~\ref{lemma:mhd_rotational_invariance}. We now consider the case where Lemma~\ref{lemma:mhd_rotational_invariance} holds and there is an additional source term $\Psi$. Assume that $\Psi = \Psi^{\p_{x_1}}+\Psi^{\p_{x_2}}+\Psi^{\p_{x_3}}$ where $\Psi^{\p_{x_i}}$ contains only quasilinear terms of first derivatives with respect to $x_i$, the condition of rotational invariance on $\Psi$ is
\begin{equation}\label{eq:rotation_Powell_test}
\Psi^{\p_{x_1}}(\bsfU) = \bT^{-1}\left(\bR_{11}\Psi^{\p_{x_1}}(\bT\bsfU)+\bR_{21}\Psi^{\p_{x_2}}(\bT\bsfU)+\bR_{31}\Psi^{\p_{x_3}}(\bT\bsfU)\right).
\end{equation}
The equality \eqref{eq:rotation_Powell_test} is true if the source term $\Psi$ is chosen to be one of the divergence source terms $\Psi(\alpha_{\bbm},\alpha_E,\alpha_{\bB})$ for any $\alpha_{\bbm},\alpha_E,\alpha_{\bB}$. Thus, the Powell term $\Psi_{\mathrm{Powell}}$ and the Janhunen term $\Psi_{\mathrm{Janhunen}}$ are rotational invariant. Now we consider the regularized equation \eqref{eq:mhd_viscous}. The rotational invariance assumption gives an extra condition on the viscous flux,
\begin{equation}\label{eq:GP_rotational_invariance}
\bT\begin{pmatrix}\p_{x_1}\\\p_{x_2}\\\p_{x_3}\end{pmatrix}\SCAL\;\bsfF_{\calV}(\bsfU) = \left(\bR\begin{pmatrix}\p_{x_1}\\\p_{x_2}\\\p_{x_3}\end{pmatrix}\right)\SCAL\;\bsfF_{\calV,\xi}(\bT\bsfU),
\end{equation}
where all the partial derivatives in $\bsfF_{\calV,\xi}$ are with respect to $\xi$. Denote $\bsfF_{\calV,1},\bsfF_{\calV,2},\bsfF_{\calV,3}$ the three $x_i-$ components of $\bsfF_{\calV}$. The viscous flux $\bsfF_{\calV}$ is a quasilinear function of first order spatial derivatives, $\bsfF_{\calV,j}=\sum_{i=1}^3\bsfF_{\calV,j}^{\p_{x_i}}$ where $\bsfF_{\calV,j}^{\p_{x_i}}$ contains all the $\p_{x_i}$ terms of $\bsfF_{\calV,j}$, for $j = 1,2,3$.

The following proposition simplifies \eqref{eq:GP_rotational_invariance} and shows a slightly more convenient way to check if a viscous regularization to \eqref{eq:mhd_viscous} preserves rotational invariance.

\begin{proposition}\label{proposition:rotational_invariance}
A viscous flux $\bsfF_\calV$ in \eqref{eq:mhd_viscous} is rotationally invariant if the following statements are true:
\[
\begin{array}{rl}
  (i) & \bT\bsfF_{\calV,1}^{\p_{x_1}}(\bsfU)=\bR_{11}\bsfF_{\calV,1}^{\p_{x_1}}(\bT\bsfU)+\bR_{21}\bsfF_{\calV,2}^{\p_{x_1}}(\bT\bsfU)+\bR_{31}\bsfF_{\calV,3}^{\p_{x_1}}(\bT\bsfU);\\
  (ii) & \begin{array}{r}\bT\left(\p_{x_1}\bsfF_{\calV,1}^{\p_{x_2}}(\bsfU)+\p_{x_2}\bsfF_{\calV,1}^{\p_{x_1}}(\bsfU)\right)=\\ \\ \end{array}
  \begin{array}{l}
    \bR_{11}\p_{x_1}\bsfF_{\calV,1}^{\p_{x_2}}(\bT\bsfU)+\bR_{12}\p_{x_2}\bsfF_{\calV,1}^{\p_{x_1}}(\bT\bsfU)\\
    +\bR_{21}\p_{x_1}\bsfF_{\calV,2}^{\p_{x_2}}(\bT\bsfU)+\bR_{22}\p_{x_2}\bsfF_{\calV,2}^{\p_{x_1}}(\bT\bsfU)\\
    +\bR_{31}\p_{x_1}\bsfF_{\calV,3}^{\p_{x_2}}(\bT\bsfU)+\bR_{32}\p_{x_2}\bsfF_{\calV,3}^{\p_{x_1}}(\bT\bsfU).
  \end{array}
\end{array}
\]
\end{proposition}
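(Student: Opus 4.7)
The plan is to start from the rotational invariance condition \eqref{eq:GP_rotational_invariance} itself and expand both sides using the linear decomposition $\bsfF_{\calV,j}=\sum_{i=1}^3 \bsfF_{\calV,j}^{\p_{x_i}}$, then organize the resulting nine summands on each side according to the pair of spatial directions whose derivatives they carry, thereby reducing the full invariance requirement to a short list of elementary identities that (i)--(ii) cover.

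First I would rewrite the LHS of \eqref{eq:GP_rotational_invariance} as
$$\bT\sum_{j=1}^3\p_{x_j}\bsfF_{\calV,j}(\bsfU)=\sum_{i,j=1}^3 \bT\,\p_{x_j}\bsfF_{\calV,j}^{\p_{x_i}}(\bsfU),$$
and on the RHS, convert $\p_{\xi_k}=\sum_{l}\bR_{kl}\p_{x_l}$ and insert the analogous decomposition of $\bsfF_{\calV,\xi,k}$. After this, each summand on either side has a well-defined pair of derivative directions $(x_i,x_j)$, and I would sort the nine pairs into three \emph{diagonal} groups ($i=j$) and three unordered \emph{off-diagonal} groups ($\{i,j\}$ with $i\ne j$). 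Matching the group with $i=j=1$ on both sides gives precisely condition (i), while matching the symmetric pair $\p_{x_1}\bsfF_{\calV,1}^{\p_{x_2}}+\p_{x_2}\bsfF_{\calV,1}^{\p_{x_1}}$ with its RHS counterpart gives precisely condition (ii).

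The remaining diagonal cases ($i=j=2,3$) and off-diagonal cases ($\{1,3\}$ and $\{2,3\}$) would be handled by relabeling the axes: since the rotation $\bR$ depends on arbitrary angles, the same expansion applied after a coordinate permutation produces the analogous identities, and the orthogonality $\sum_l \bR_{al}\bR_{bl}=\delta_{ab}$ ensures that the cross terms that do not appear in (i)--(ii) cancel between the two sides. Thus (i)--(ii) (together with their images under axis permutation, which are automatic) exhaust the independent identities needed for \eqref{eq:GP_rotational_invariance}.

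The main obstacle I anticipate is the bookkeeping: because each $\bsfF_{\calV,j}^{\p_{x_i}}$ can carry $\bsfU$-dependent coefficients (e.g.\ $\mu\rho$, $\kappa\GRAD\rho$), applying $\p_{x_j}$ produces both a genuine second-derivative $\p_{x_j}\p_{x_i}$ and lower-order products of first derivatives of $\bsfU$. One must verify that these lower-order contributions also match on the two sides under (i)--(ii); this reduces to the observation that $\bT\bsfU$ enters the coefficients only through $\rho$, $\bu$, $\bB$, each of which transforms covariantly under $\bR$, so that the orthogonality $\bR^\top\bR=\polI$ resolves all residual cross terms. Once this is handled, the proposition is a mechanical consequence of the grouping above.
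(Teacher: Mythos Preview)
Your approach is essentially the same as the paper's: the paper's justification of the proposition is simply that condition (i) is obtained by matching the $\p_{x_1}\p_{x_1}$ terms in \eqref{eq:GP_rotational_invariance}, condition (ii) by matching the $\p_{x_1}\p_{x_2}$ terms, and then the arbitrariness of $(\psi,\theta)$ is invoked so that the remaining derivative pairs are covered. Your proposal spells out the same grouping-by-derivative-pairs argument with more care (and flags the lower-order bookkeeping, which the paper does not discuss), but the strategy and key idea coincide.
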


The first condition is derived by matching the $\p_{x_1}\p_{x_1}$ terms in \eqref{eq:GP_rotational_invariance}. The second condition matches the $\p_{x_1}\p_{x_2}$ terms. Since the rotation is arbitrary, \eqref{eq:GP_rotational_invariance} follows if $(i)$ and $(ii)$ hold for all $\psi,\theta$.

\begin{lemma}\label{lemma:GP_rotational_invariance}
The GP viscous flux $\bsfF_\calV^{\mathrm{GP}}$ given by \eqref{eq:GP_mhd_flux} is rotationally invariant.
\end{lemma}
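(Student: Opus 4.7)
The plan is to verify the two algebraic identities stated in Proposition~\ref{proposition:rotational_invariance}, since those are the only conditions that need to be checked for $\bsfF_\calV^{\mathrm{GP}}$. First I would decompose the GP flux \eqref{eq:GP_mhd_flux} slot by slot (mass, momentum, energy, magnetic) and, within each slot, isolate the pieces $\bsfF_{\calV,j}^{\p_{x_i}}$ containing only $\p_{x_i}$ derivatives. The key structural features to exploit are: the mass/energy slots involve pure scalar gradients $\kappa\nabla\rho$ and $\kappa\nabla(\rho e)$; the momentum slot decomposes into the symmetric piece $\mu\rho\GRAD^s\bu$ plus the tensor product $(\kappa\nabla\rho)\otimes\bu$; and the magnetic slot is the antisymmetric $\eta(\GRAD\bB-\GRAD\bB^\top)$. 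Under $\bT$, the scalars $\rho$ and $\rho e$ are fixed while the vector fields transform as $\bu\mapsto\bR\bu$ and $\bB\mapsto\bR\bB$, and partial derivatives transform as $\p_{x_i}=\sum_k R_{ki}\p_{\xi_k}$.

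For condition (i), I would check term by term that $\bT\bsfF_{\calV,1}^{\p_{x_1}}(\bsfU)$ matches the $\bR_{k1}$-weighted sum on the right. The scalar-gradient pieces reduce immediately; the tensor-product piece $(\kappa\p_{x_1}\rho)\bu$ simply transforms by $\bR$ which is exactly what $\bT$ applies; and the symmetric/antisymmetric gradient pieces reduce to contractions of the form $\sum_k R_{k1}$ acting on objects that rotate covariantly, so everything collapses using the orthogonality $\bR^\top\bR=\polI$.

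For condition (ii), which mixes $\p_{x_1}$ and $\p_{x_2}$ derivatives across different spatial components of the flux, the symmetry/antisymmetry of the relevant pieces is crucial. The hydrodynamic piece $\mu\rho\GRAD^s\bu$ is symmetric in the spatial and velocity indices, so its cross contributions to $\bsfF_{\calV,1}^{\p_{x_2}}$ and $\bsfF_{\calV,2}^{\p_{x_1}}$ pair up cleanly under rotation; the magnetic piece $\eta(\GRAD\bB-\GRAD\bB^\top)$ is antisymmetric, so its cross contributions combine with matching signs after the rotation is applied. Once the transformation rules for $\bu$ and $\bB$ are substituted and the sums over $R_{k1}, R_{k2}$ are carried out, the identity again reduces to $\bR^\top\bR=\polI$.

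The main obstacle is the sheer volume of index bookkeeping: separating each flux term into its $\p_{x_i}$ contributions across three spatial slots and then matching them under an arbitrary rotation $\bR(\psi,\theta)$ produces many terms. Conceptually, however, the result is not surprising: the GP flux is assembled entirely from tensor contractions of rotation-covariant objects ($\rho$, $e$, $\bu$, $\bB$, and $\nabla$), so rotational invariance is forced; the bulk of the work is simply to record this observation in the form required by Proposition~\ref{proposition:rotational_invariance}.
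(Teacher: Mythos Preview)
Your proposal is correct and follows essentially the same approach as the paper: both reduce the claim to verifying the two identities in Proposition~\ref{proposition:rotational_invariance} by direct substitution. The only difference is that the paper offloads the index bookkeeping you describe to a symbolic Matlab script rather than carrying it out by hand.
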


It can be verified that the equalities in \cref{proposition:rotational_invariance} hold for the GP viscous flux $\bsfF_\calV^{\mathrm{GP}}$. A symbolic Matlab script to verify \cref{lemma:GP_rotational_invariance} is included in \cref{sec:matlab_code}. \cref{thm:rotational_invariance} is followed by \cref{lemma:mhd_rotational_invariance} and \cref{lemma:GP_rotational_invariance}.

Similarly, we can show that the MHD system \eqref{eq:mhd_viscous} regularized by the monolithic parabolic flux $\epsilon\GRAD\bsfU$ is Galilean and rotationally invariant.

\subsubsection{Matlab code to check rotational invariance}\label{sec:matlab_code}
\begin{small}
\begin{verbatim}
syms x y z real
syms theta_y theta_z real
syms rho(x,y,z) u(x,y,z) v(x,y,z) w(x,y,z) E(x,y,z) p(x,y,z)
syms b1(x,y,z) b2(x,y,z) b3(x,y,z)
syms kappa(x,y,z) mu(x,y,z) nu(x,y,z)
U = [rho; rho*u; rho*v; rho*w; E; b1; b2; b3];
R_y = [ cos(theta_y)	0	sin(theta_y);...
        0               1	0;...
        -sin(theta_y)	0	cos(theta_y)];
R_z = [ cos(theta_z)    sin(theta_z)	0 ;...
        -sin(theta_z)   cos(theta_z)	0 ;...
        0               0               1];  
R = R_y*R_z;
T = sym(eye(8));
T(2:4,2:4) = R;
T(6:8,6:8) = R;
U_xyz = U(x,y,z);
TU = T*U;
TU_xyz = TU(x,y,z);
I = eye(3);

% matching \p_x'x'
LHS = Fvisc_u(I,1,1,U_xyz);
RHS = R(1,1)*Fvisc_u(R,1,1,TU_xyz)+R(2,1)*Fvisc_u(R,2,1,TU_xyz)...
     +R(3,1)*Fvisc_u(R,3,1,TU_xyz);
compare = simplify(T*LHS-RHS) % should be zeros
bb = Fvisc_u(I,1,1,U_xyz)+Fvisc_u(I,1,2,U_xyz)+Fvisc_u(I,1,3,U_xyz);

% matching \p_x'y'
LHS = diff(Fvisc_u(I,1,2,U_xyz),x)+diff(Fvisc_u(I,2,1,U_xyz),y);
RHS = R(1,1)*diff(Fvisc_u(R,1,2,TU_xyz),x)...
        +R(1,2)*diff(Fvisc_u(R,1,1,TU_xyz),y)...
     +R(2,1)*diff(Fvisc_u(R,2,2,TU_xyz),x)...
        +R(2,2)*diff(Fvisc_u(R,2,1,TU_xyz),y)...
     +R(3,1)*diff(Fvisc_u(R,3,2,TU_xyz),x)...
        +R(3,2)*diff(Fvisc_u(R,3,1,TU_xyz),y);
compare = simplify(T*LHS-RHS) % should be zeros

% Test the following viscous flux
function Fvisc = Fvisc_u(R,i,j,uu)
    % i: i = 1 => F, i = 2 => G, i = 3 => H
    % as in Billett & Toro, 1998, Eq. (11)
    % j: which derivative (in the original coordinates) you want to collect?
    syms x y z real
    syms kappa(x,y,z)
    assumeAlso(kappa(x,y,z),'real');
    rho = uu(1); u = uu(2)/uu(1); v = uu(3)/uu(1); w = uu(4)/uu(1); E = uu(5);
    b1 = uu(6); b2 = uu(7); b3 = uu(8);
    u_ = uu(i+1)/uu(1);
    R_ij = sym(zeros(1,3));
    R_1j = sym(zeros(1,3));R_2j = sym(zeros(1,3));R_3j = sym(zeros(1,3));
    R_ij(j) = R(i,j);R_1j(j) = R(1,j);R_2j(j) = R(2,j);R_3j(j) = R(3,j);
    Fvisc = [R_ij*gradient(rho,[x,y,z]);...
            rho*(R_ij*gradient(u,[x,y,z])+R_1j*gradient(u_,[x,y,z]))/2...
                +R_ij*gradient(rho,[x,y,z])*u;...
            rho*(R_ij*gradient(v,[x,y,z])+R_2j*gradient(u_,[x,y,z]))/2...
                +R_ij*gradient(rho,[x,y,z])*v;...
            rho*(R_ij*gradient(w,[x,y,z])+R_3j*gradient(u_,[x,y,z]))/2...
                +R_ij*gradient(rho,[x,y,z])*w;...
            R_ij*gradient(E-rho*(u^2+v^2+w^2)/2-(b1^2+b2^2+b3^2)/2,[x,y,z])...
              +(u^2+v^2+w^2)/2*R_ij*gradient(rho,[x,y,z])...
              +rho*u*(R_ij*gradient(u,[x,y,z])+R_1j*gradient(u_,[x,y,z]))/2...
              +rho*v*(R_ij*gradient(v,[x,y,z])+R_2j*gradient(u_,[x,y,z]))/2...
              +rho*w*(R_ij*gradient(w,[x,y,z])+R_3j*gradient(u_,[x,y,z]))/2...
              +b1*(R_ij*gradient(b1,[x,y,z])-R_1j*gradient(b_,[x,y,z]))...
              +b2*(R_ij*gradient(b2,[x,y,z])-R_2j*gradient(b_,[x,y,z]))...
              +b3*(R_ij*gradient(b3,[x,y,z])-R_3j*gradient(b_,[x,y,z]));...
            R_ij*gradient(b1,[x,y,z])-R_1j*gradient(b_,[x,y,z]);...
            R_ij*gradient(b2,[x,y,z])-R_2j*gradient(b_,[x,y,z]);...
            R_ij*gradient(b3,[x,y,z])-R_3j*gradient(b_,[x,y,z])];
end
\end{verbatim}
\end{small}








\section{Magnetic helicity}\label{appendix:magnetic_helicity}
We follow a similar proof as \cite[Sec 2]{Gawlik2022}. 

\begin{proposition}
Magnetic helicity is conserved if $\nu = 0$ and if $\DIV \bB = 0$.
\end{proposition}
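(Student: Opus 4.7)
The plan is to differentiate the helicity functional in time and show that each resulting term vanishes under the stated assumptions. Since $\eta=0$, the magnetic equation \eqref{eq:mhd2} reduces to the ideal induction equation
\[
\p_t \bB + \nabla \CROSS (\bB \CROSS \bu) = 0, \quad \text{i.e.,} \quad \p_t \bB = \nabla \CROSS (\bu \CROSS \bB).
\]
Because $\DIV \bB = 0$ holds pointwise, a vector potential $\bA$ with $\nabla \CROSS \bA = \bB$ exists. First I would argue that its time evolution can be taken, up to a gauge choice, to be
\[
\p_t \bA = \bu \CROSS \bB - \GRAD \phi
\]
for some scalar field $\phi$, since applying $\nabla\CROSS$ to this equation recovers the induction equation exactly.

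Next I would compute the integrand of the helicity time derivative as
\[
\p_t(\bA \SCAL \bB) = (\p_t \bA)\SCAL \bB + \bA \SCAL (\p_t \bB) = (\bu \CROSS \bB)\SCAL \bB - \GRAD\phi \SCAL \bB + \bA \SCAL \bigl(\nabla \CROSS (\bu \CROSS \bB)\bigr).
\]
The first term is zero pointwise since $(\bu \CROSS \bB)\perp \bB$. I would then integrate over $\mR^d$ and handle the two remaining terms separately. For the gradient term, integration by parts together with the compact support of $\bB$ (assumed in Section~\ref{sec:conservativeness}) and $\DIV \bB = 0$ give $\int_{\mR^d}\GRAD\phi\SCAL \bB\,\ud \bx = -\int_{\mR^d}\phi\DIV \bB\,\ud\bx = 0$. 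For the curl term, I would invoke the integration-by-parts identity \eqref{eq:IBP cross}, yielding
\[
\int_{\mR^d} \bA \SCAL \bigl(\nabla \CROSS (\bu \CROSS \bB)\bigr)\,\ud \bx = \int_{\mR^d} (\nabla \CROSS \bA)\SCAL (\bu \CROSS \bB)\,\ud\bx = \int_{\mR^d} \bB \SCAL (\bu \CROSS \bB)\,\ud\bx = 0,
\]
again because $(\bu \CROSS \bB)\perp \bB$. Combining these gives $\tfrac{d}{dt}\int_{\mR^d}\bA\SCAL \bB\,\ud\bx = 0$.

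The main obstacle I anticipate is the rigorous handling of the vector potential $\bA$: it is only defined up to a gauge, so one must either justify that the helicity integral is gauge-invariant under the assumed decay/compact-support conditions, or explicitly fix a gauge (e.g.\ Coulomb, $\DIV \bA = 0$) in which the evolution equation $\p_t \bA = \bu\CROSS \bB - \GRAD\phi$ holds. The calculation itself is then short, relying essentially only on the two orthogonality facts $(\bu \CROSS \bB)\SCAL\bB = 0$ and the curl integration-by-parts identity, together with $\DIV \bB = 0$ to eliminate the gauge-dependent gradient term.
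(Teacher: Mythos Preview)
Your argument is correct, but the paper's proof organizes the computation differently so as to avoid ever writing down an evolution equation for $\bA$. Instead of postulating $\p_t\bA = \bu\CROSS\bB - \GRAD\phi$, the paper observes that
\[
(\p_t\bA,\bB) = (\p_t\bA,\nabla\CROSS\bA) = (\nabla\CROSS\p_t\bA,\bA) = (\p_t\bB,\bA),
\]
so that $\p_t(\bA,\bB) = 2(\p_t\bB,\bA)$. The right-hand side involves only $\p_t\bB$, which is read off directly from the magnetic equation, and the rest of the proof is then identical to your treatment of the $\bA\SCAL\p_t\bB$ term: rewrite the induction term as a curl, move the curl onto $\bA$ via \eqref{eq:IBP cross}, and use $(\bu\CROSS\bB)\SCAL\bB=0$. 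What this buys is precisely the elimination of the obstacle you flagged: no gauge scalar $\phi$ appears, so there is nothing to fix or justify. Your route is equally valid and arguably more physical, but it carries the overhead of the gauge discussion that the paper's symmetry trick sidesteps.
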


\begin{proof}
For $\bB$ to admit a vector potential $\bA$, we require that $\bB$ is pointwise divergence-free since the divergence of curl is zero ($\DIV ( \nabla \CROSS \bA ) = 0$). Thus magnetic helicity only exists if $\DIV \bB = 0$. To show conservation of magnetic helicity, we start with the definition of magnetic helicity

\begin{equation}
\begin{split}
\partial_t \int_\Omega \bA \SCAL \bB \ud \bx =  \partial_t (\bA, \bB) = (\bA_t, \bB) + (\bA, \bB_t) = (\bA_t , \nabla \CROSS \bA) + (\bA, \bB_t)\\= (\nabla \CROSS \bA_t , \bA) + (\bA, \bB_t) 
= ( \bB_t , \bA) + (\bA, \bB_t) \\
= 2 ( \DIV ( \bB \otimes \bu - \bu \otimes \bB ) , \bA  ) + 2 \l( \DIV \l(\nu \l( \GRAD \bB - (\GRAD \bB)^\top \r) \r), \bA \r) \\
= 2 ( \DIV ( \bB \otimes \bu - \bu \otimes \bB ) , \bA  ) + 2 \l( \DIV \l(\nu \l( \GRAD \bB - (\GRAD \bB)^\top \r) \r), \bA \r), \\
\end{split}
\end{equation}
where \eqref{eq:IBP cross} was used. Using the identity $\nabla \CROSS (\bu \CROSS \bB) = \DIV ( \bB \otimes \bu - \bu \otimes \bB )$ yields
\begin{equation}
\begin{split}
\partial_t (\bA, \bB) = 2 ( \DIV ( \bB \otimes \bu - \bu \otimes \bB ) , \bA  ) + 2 \l( \DIV \l(\nu \l( \GRAD \bB - (\GRAD \bB)^\top \r) \r), \bA \r) \\
=  2 (\nabla \CROSS (\bu \CROSS \bB), \bA) + 2 \l( \DIV \l(\nu \l( \GRAD \bB - (\GRAD \bB)^\top \r) \r), \bA \r)\\
=  2 ( \bu \CROSS \bB,  \nabla \CROSS \bA) + 2 \l( \DIV \l(\nu \l( \GRAD \bB - (\GRAD \bB)^\top \r) \r), \bA \r) \\
=  2 ( \bu \CROSS \bB,  \bB ) + 2 \l( \DIV \l(\nu \l( \GRAD \bB - (\GRAD \bB)^\top \r) \r), \bA \r) =   2 \l( \DIV \l(\nu \l( \GRAD \bB - (\GRAD \bB)^\top \r) \r), \bA \r),
\end{split}
\end{equation}
where we used \eqref{eq:IBP cross} and the fact that $\bu \CROSS \bB$ is perpendicular to $\bB$. Thus, magnetic helicity is conserved if $\nu = 0$.
\end{proof}

\begin{remark}
If $\nu$ is constant, the term $ 2 \l( \DIV \l(\nu \l( \GRAD \bB - (\GRAD \bB)^\top \r) \r), \bA \r)$ simpifies to

\begin{equation}
\begin{split}
2 \l( \DIV \l(\nu \l( \GRAD \bB - (\GRAD \bB)^\top \r) \r), \bA \r) = 2 \nu \l(  \DIV \l(\l( \GRAD \bB - (\GRAD \bB)^\top \r) \r), \bA \r) \\
 = -2 \nu  (  \nabla \CROSS (\nabla \CROSS \bB) , \bA) = -2 \nu ( \nabla \CROSS \bB , \nabla \CROSS \bA ) = -2 \nu  (\nabla \CROSS \bB , \bB ) \neq 0,
\end{split}
\end{equation}
where \eqref{eq:IBP cross} and $\GRAD^2 \bB - \GRAD (\DIV \bB) = - \nabla \CROSS (\nabla \CROSS \bB) $ was used.

\end{remark}

\section{Tables of convergence studies}
Table~\ref{table:convergence_vortex_P1} shows convergence rates of the proposed schemes with smooth solutions using $\polP_1, \polP_2, \polP_3$ elements, respectively. Table~\ref{table:convergence_briowu} shows convergence behaviour with nonsmooth solutions.

\begin{table}[h!]
    \centering
    \caption{Smooth vortex problem. Convergence of relative errors in L$^1$ and L$^2$ for velocity $\bu_h$ and magnetic field $\bB_h$ at final time $\widehat t=0.05$ using residual viscosity and different polynomial spaces.}
    \label{table:convergence_vortex_P1}
\vspace{0.in}
$\polP_1$ elements
\\
\vspace{0.1in}

    \begin{adjustbox}{max width=\textwidth}
    \begin{tabular}{c|c|c|c|c|c|c|c|c}
    \hline
    \multirow{2}{*}{\#DOFs} &  \multicolumn{4}{c|}{GLM-GP $\bu_h$} &  \multicolumn{4}{c}{GLM-$\text{GP}^s$ $\bu_h$}  \\ \cline{2-9}
     {}   &       L$^1$   &    Rate   &       L$^2$   &    Rate &       L$^1$   &    Rate   &       L$^2$   &    Rate   \\ \hline
     7442 & 6.08E-04 &     -- &   3.37E-03 &     -- & 6.08E-04 &     -- &   3.37E-03 &     -- \\ 
    29282 & 1.51E-04 &   2.01 &   8.36E-04 &   2.01 & 1.51E-04 &   2.01 &   8.36E-04 &   2.01 \\
   116162 & 3.75E-05 &   2.01 &   2.07E-04 &   2.01 & 3.75E-05 &   2.01 &   2.07E-04 &   2.01 \\
   462722 & 9.33E-06 &   2.01 &   5.16E-05 &   2.01 & 9.33E-06 &   2.01 &   5.16E-05 &   2.01 \\ 
   \hline
   \hline
   \multirow{2}{*}{\#DOFs} &  \multicolumn{4}{c|}{GLM-GP $\bB_h$} &  \multicolumn{4}{c}{GLM-$\text{GP}^s$ $\bB_h$}  \\ \cline{2-9}
     {}   &       L$^1$   &    Rate   &       L$^2$   &    Rate &       L$^1$   &    Rate   &       L$^2$   &    Rate   \\ \hline
     7442 & 2.45E-02 &     -- &   2.73E-02 &     -- & 2.45E-02 &     -- &   2.73E-02 &     -- \\ 
    29282 & 6.05E-03 &   2.04 &   6.75E-03 &   2.04 & 6.05E-03 &   2.04 &   6.75E-03 &   2.04 \\ 
   116162 & 1.50E-03 &   2.03 &   1.67E-03 &   2.03 & 1.50E-03 &   2.03 &   1.67E-03 &   2.03 \\ 
   462722 & 3.72E-04 &   2.01 &   4.15E-04 &   2.02 & 3.72E-04 &   2.01 &   4.15E-04 &   2.02 \\ 
   \hline
    \end{tabular}
    \end{adjustbox}
\\
\vspace{0.2in}
$\polP_2$ elements
\\
\vspace{0.1in}

    \begin{adjustbox}{max width=\textwidth}
    \begin{tabular}{c|c|c|c|c|c|c|c|c}
    \hline
    \multirow{2}{*}{\#DOFs} &  \multicolumn{4}{c|}{GLM-GP $\bu_h$} &  \multicolumn{4}{c}{GLM-$\text{GP}^s$ $\bu_h$}  \\ \cline{2-9}
     {}   &       L$^1$   &    Rate   &       L$^2$   &    Rate &       L$^1$   &    Rate   &       L$^2$   &    Rate   \\ \hline
     7442 & 1.90E-04 &     -- &   1.13E-03 &     -- & 1.90E-04 &     -- &   1.13E-03 &     -- \\ 
    29282 & 3.35E-05 &   2.50 &   1.97E-04 &   2.51 & 3.35E-05 &   2.50 &   1.97E-04 &   2.51 \\
   116162 & 7.81E-06 &   2.10 &   4.57E-05 &   2.11 & 7.81E-06 &   2.10 &   4.57E-05 &   2.11 \\
   462722 & 1.96E-06 &   2.00 &   1.14E-05 &   2.00 & 1.96E-06 &   2.00 &   1.14E-05 &   2.00 \\ 
   \hline
   \hline
   \multirow{2}{*}{\#DOFs} &  \multicolumn{4}{c|}{GLM-GP $\bB_h$} &  \multicolumn{4}{c}{GLM-$\text{GP}^s$ $\bB_h$}  \\ \cline{2-9}
     {}   &       L$^1$   &    Rate   &       L$^2$   &    Rate &       L$^1$   &    Rate   &       L$^2$   &    Rate   \\ \hline
     7442 & 7.64E-03 &     -- &   9.00E-03 &     -- & 7.64E-03 &     -- &   9.00E-03 &     -- \\ 
    29282 & 1.25E-03 &   2.64 &   1.42E-03 &   2.70 & 1.25E-03 &   2.64 &   1.42E-03 &   2.70 \\ 
   116162 & 2.76E-04 &   2.19 &   2.98E-04 &   2.27 & 2.76E-04 &   2.19 &   2.98E-04 &   2.27 \\ 
   462722 & 6.82E-05 &   2.02 &   7.16E-05 &   2.06 & 6.82E-05 &   2.02 &   7.16E-05 &   2.06 \\ \hline
    \end{tabular}
    \end{adjustbox}

\vspace{0.2in}
$\polP_3$ elements
\\
\vspace{0.1in}

   \label{table:convergence_vortex_P3}
    \begin{adjustbox}{max width=\textwidth}
    \begin{tabular}{c|c|c|c|c|c|c|c|c}
    \hline
    \multirow{2}{*}{\#DOFs} &  \multicolumn{4}{c|}{GLM-GP $\bu_h$} &  \multicolumn{4}{c}{GLM-$\text{GP}^s$ $\bu_h$}  \\ \cline{2-9}
     {}   &       L$^1$   &    Rate   &       L$^2$   &    Rate &       L$^1$   &    Rate   &       L$^2$   &    Rate   \\ \hline
     7442 & 1.14E-04 &     -- &   5.80E-04 &     -- & 1.14E-04 &     -- &   5.80E-04 &     -- \\ 
    29282 & 8.09E-06 &   3.81 &   4.90E-05 &   3.56 & 8.09E-06 &   3.81 &   4.90E-05 &   3.56 \\
   116162 & 5.26E-07 &   3.94 &   3.93E-06 &   3.64 & 5.26E-07 &   3.94 &   3.93E-06 &   3.64 \\
   462722 & 4.05E-08 &   3.70 &   3.93E-07 &   3.32 & 4.05E-08 &   3.70 &   3.93E-07 &   3.32 \\ 
   \hline
   \hline
   \multirow{2}{*}{\#DOFs} &  \multicolumn{4}{c|}{GLM-GP $\bB_h$} &  \multicolumn{4}{c}{GLM-$\text{GP}^s$ $\bB_h$}  \\ \cline{2-9}
     {}   &       L$^1$   &    Rate   &       L$^2$   &    Rate &       L$^1$   &    Rate   &       L$^2$   &    Rate   \\ \hline
     7442 & 4.33E-03 &     -- &   4.31E-03 &     -- & 4.33E-03 &     -- &   4.31E-03 &     -- \\ 
    29282 & 2.68E-04 &   4.06 &   2.83E-04 &   3.98 & 2.68E-04 &   4.06 &   2.83E-04 &   3.98 \\ 
   116162 & 1.67E-05 &   4.04 &   2.19E-05 &   3.71 & 1.67E-05 &   4.04 &   2.19E-05 &   3.71 \\ 
   462722 & 1.42E-06 &   3.56 &   2.41E-06 &   3.20 & 1.42E-06 &   3.56 &   2.41E-06 &   3.20 \\ \hline
    \end{tabular}
    \end{adjustbox}
\end{table}

\begin{table}[h!]
    \centering
    \caption{Brio-Wu problem. Convergence of relative errors in L$^1$ and L$^2$ against a fine reference solution. $\polP_1$ elements, for both GP and resistive MHD fluxes using residual viscosity.}
    \label{table:convergence_briowu}
    \begin{adjustbox}{max width=\textwidth}
    \begin{tabular}{c|c|c|c|c|c|c|c|c}
    \hline
    \multirow{2}{*}{\#DOFs} &  \multicolumn{4}{c|}{GP flux $\rho_h$} &  \multicolumn{4}{c}{Resistive MHD flux $\rho_h$}  \\ \cline{2-9}
     {}  &    L$^1$   &   Rate &     L$^2$  &   Rate &   L$^1$  &   Rate &    L$^2$   &  Rate  \\ \hline
     161 &   3.02E-02 &     -- &   6.35E-02 &     -- & 2.91E-02 &     -- &   6.00E-02 &     -- \\ 
     321 &   1.65E-02 &   0.88 &   4.46E-02 &   0.51 & 1.68E-02 &   0.80 &   4.24E-02 &   0.50 \\
     641 &   9.21E-03 &   0.84 &   3.17E-02 &   0.49 & 9.49E-03 &   0.83 &   3.02E-02 &   0.49 \\
    1281 &   4.94E-03 &   0.90 &   2.25E-02 &   0.50 & 5.12E-03 &   0.89 &   2.15E-02 &   0.49 \\ 
    \hline
    \hline
    \multirow{2}{*}{\#DOFs} &  \multicolumn{4}{c|}{GP flux $E_h$} &  \multicolumn{4}{c}{Resistive MHD flux $E_h$}  \\ \cline{2-9}
     {}  &    L$^1$   &   Rate &     L$^2$  &   Rate &   L$^1$  &   Rate &    L$^2$   &  Rate  \\ \hline
     161 &   2.63E-02 &     -- &   5.38E-02 &     -- & 2.78E-02 &     -- &   5.44E-02 &     -- \\ 
     321 &   1.41E-02 &   0.91 &   3.88E-02 &   0.47 & 1.48E-02 &   0.92 &   3.77E-02 &   0.53 \\
     641 &   7.47E-03 &   0.92 &   2.75E-02 &   0.50 & 7.88E-03 &   0.91 &   2.69E-02 &   0.49 \\
    1281 &   4.03E-03 &   0.89 &   1.94E-02 &   0.50 & 4.24E-03 &   0.90 &   1.90E-02 &   0.50 \\ 
    \hline
    \hline
    \multirow{2}{*}{\#DOFs} &  \multicolumn{4}{c|}{GP flux $\bB_h$} &  \multicolumn{4}{c}{Resistive MHD flux $\bB_h$}  \\ \cline{2-9}
     {}  &    L$^1$   &   Rate &     L$^2$  &   Rate &   L$^1$  &   Rate &    L$^2$   &  Rate  \\ \hline
     161 &   1.39E-02 &     -- &   5.69E-02 &     -- & 1.58E-02 &     -- &   6.03E-02 &     -- \\ 
     321 &   7.94E-03 &   0.82 &   4.46E-02 &   0.35 & 9.11E-03 &   0.80 &   4.60E-02 &   0.39 \\
     641 &   4.16E-03 &   0.93 &   3.05E-02 &   0.55 & 4.76E-03 &   0.94 &   3.15E-02 &   0.55 \\
    1281 &   2.18E-03 &   0.93 &   2.14E-02 &   0.51 & 2.51E-03 &   0.93 &   2.20E-02 &   0.52 \\ 
    \hline
    \end{tabular}
    \end{adjustbox}
\end{table}

\section{Brio-Wu problem: GP flux vs resistive MHD flux}\label{sec:briowu_GP_vs_resistive}
\cblue{We know from the literature that the resistive MHD flux is implemented in various MHD codes. The point of this test is to validate that the proposed viscous flux converges to the same solution, and to compare the two viscous fluxes in presence of shocks. A comparison of the GP flux and the resistive MHD flux using $\polP_1$ elements for the Brio-Wu problem is shown in Figure~\ref{fig:briowu_GP_vs_resistive}. In the first-order plot, the GP flux solution appears to be more diffusive due to mass diffusion. In the high-order plot, the resistive MHD flux exhibits slightly more oscillative behaviors. The two fluxes produce different solutions but no concrete statement can be drawn from here. Furthermore, both the fluxes yield the same order of convergence in L$^1$- and L$^2$-norms, as can be seen from Table~\ref{table:convergence_briowu}. Although there exists a contact wave in this test, the inconsistency of the resistive MHD flux is hidden when other waves get involved.}

\begin{figure}[!ht]
  \centering
  \includegraphics[width=0.49\textwidth]{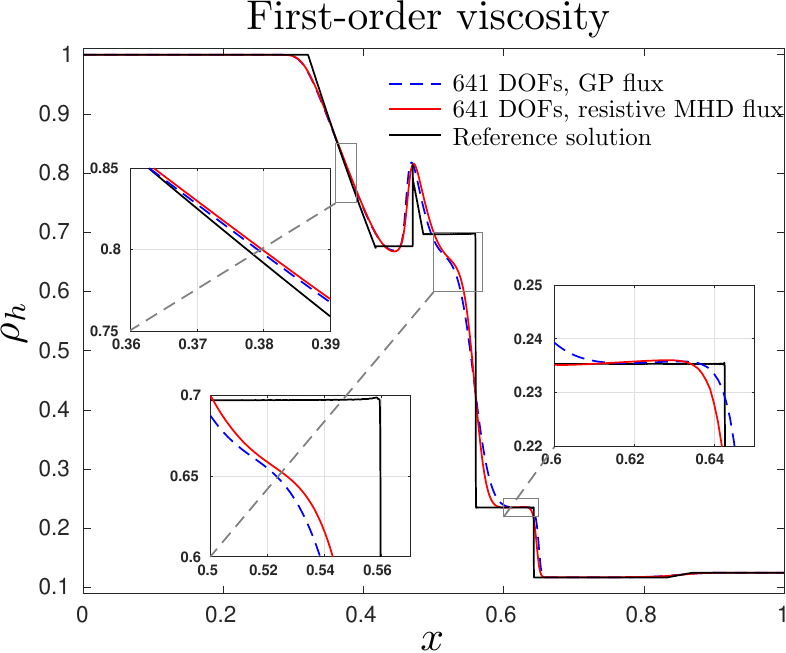}
  \includegraphics[width=0.49\textwidth]{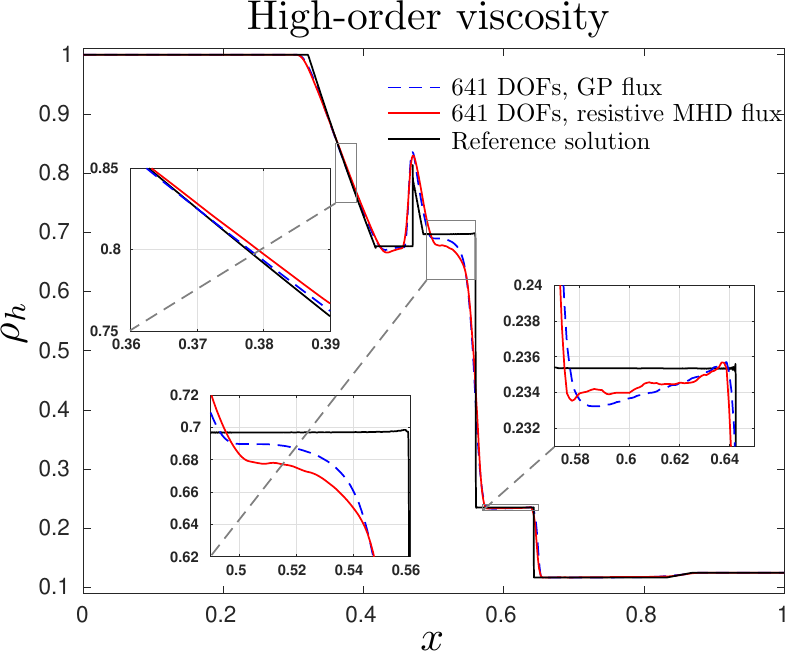}
  \caption{\cblue{A comparison of the GP flux and the resistive MHD flux using $\polP_1$ elements for the Brio-Wu problem: the density profile. Left figure: first-order viscosity is used. Right figure: high-order viscosity using the RV method is used.}}
  \label{fig:briowu_GP_vs_resistive}
\end{figure}

\cblue{A comparison of the GP flux using different polynomial degrees are shown in Figure~\ref{fig:briowu_P1P2P3}. The higher order solutions capture the discontinuities more sharply.}

\begin{figure}[!ht]
  \centering
  \includegraphics[width=0.49\textwidth]{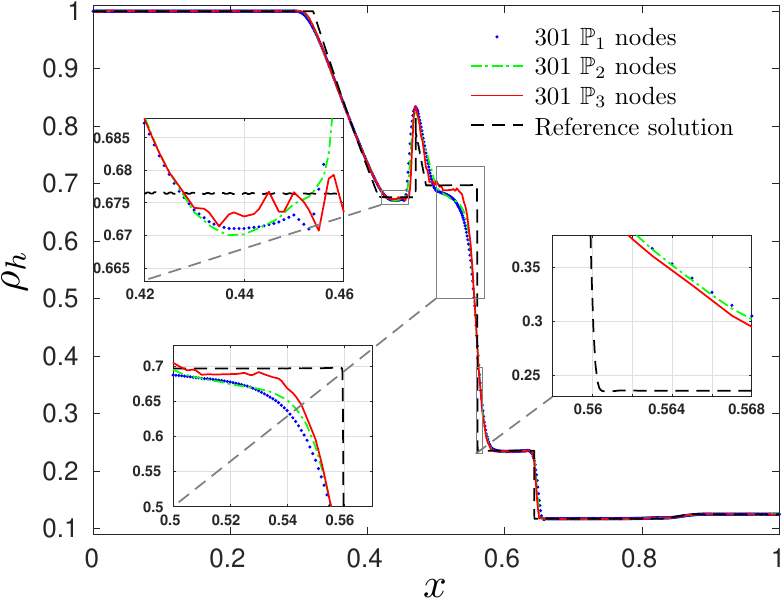}
  \includegraphics[width=0.49\textwidth]{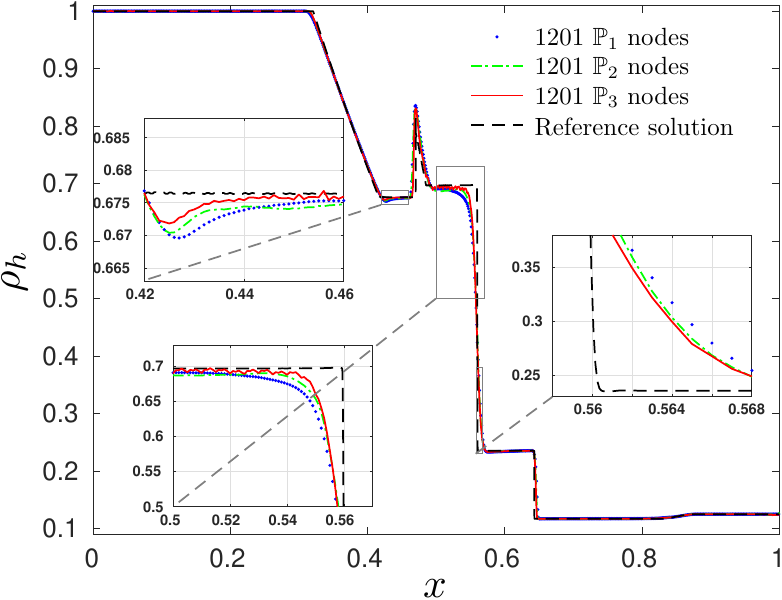}
  \caption{\cblue{A comparison of the GP flux using $\polP_1$, $\polP_2$, $\polP_3$ polynomials on the Brio-Wu problem: the density profile. Left figure: all the solutions are computed using 301 nodes. Right figure: all the solutions are computed using 1201 nodes.}}
  \label{fig:briowu_P1P2P3}
\end{figure}

\section{\cblue{Conservation of angular momentum: a numerical validation}}\label{sec:numerical_results:angular_momentum}

\cblue{A numerical validation of this property is shown in Figure~\ref{fig:angular_momentum}. We perform this test on the smooth vortex problem \cite{Wu_2018b} using 4214 $\polP_1$ nodes until $\hat t = 0.5$. The numerical results are aligned with the continuous analysis in Theorem~\ref{thm:conservation} and the summary reported in Table~\ref{table:conclusion_viscous_model}. No divergence source term is added and the GLM is used for divergence cleaning. We also note that the numerical results agree with those in \cite[Sec 6.3]{lundgren2023fully}. Specifically, the GP flux does not conserve angular momentum while symmetric viscous fluxes in the momentum equations ensure its conservation.}

\begin{figure}[!ht]
  \centering
  \includegraphics[width=0.49\textwidth]{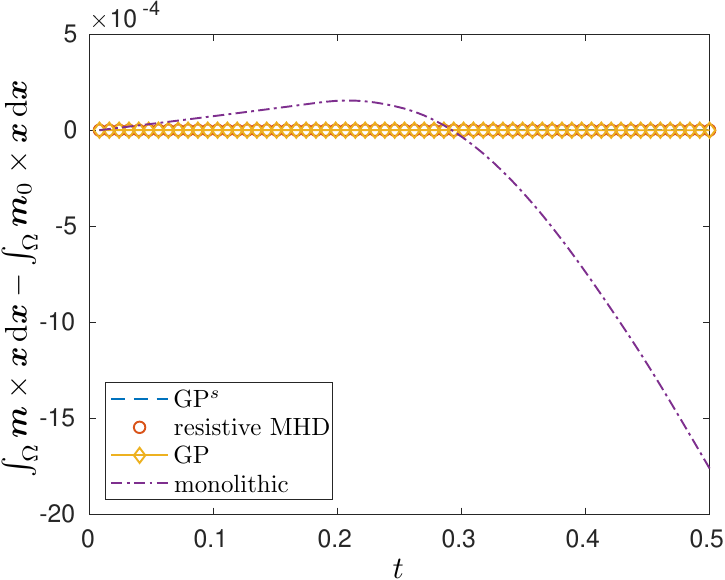}
  \includegraphics[width=0.49\textwidth]{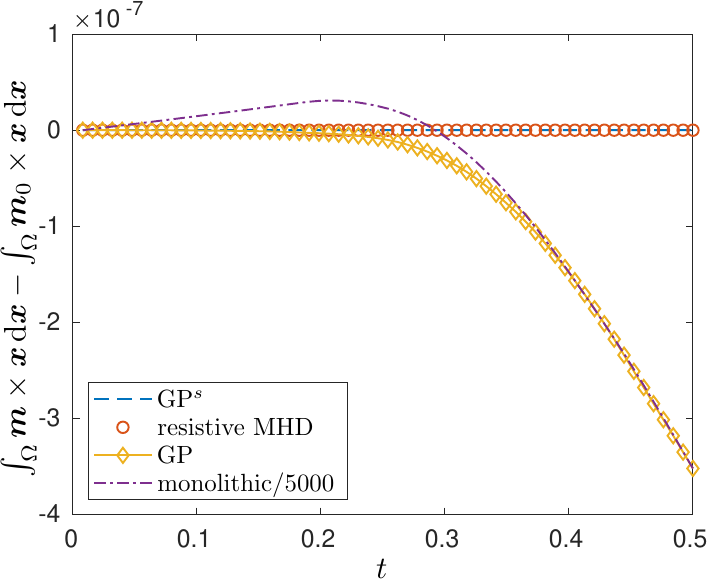}
  \caption{\cblue{Conservation of angular momentum $\int_{\Omega}\bbm\times\bx\ud\bx$ by different viscous fluxes: resistive MHD, GP, GP$^s$, and monolithic \cite{Dao2022a}. Artificial viscosity by the RV method. $\polP_1$ elements. The comparison is separated into two figures because the scales of GP and monolithic lines are different. Right figure: the monolithic line is divided by 5000. The GP$^s$ and the resistive MHD fluxes conserve angular momentum to $\calO(10^{-11})$.}}
  \label{fig:angular_momentum}
\end{figure}

\section{The Orszag-Tang problem}\label{sec:OT}
We include a benchmark by \cite{Orszag_Tang_1979} to compare the GP flux and the GP$^s$ fluxes. The domain is the unit square $\Omega=[0,1]\times[0,1]$. The gas constant is set to $\gamma=\frac{5}{3}$. The initial solution is set as
\begin{align*}
  \rho_0 & = \frac{25}{36\pi},\\
  \bu_0 & = (-\sin(2\pi y),\sin(2\pi x)),\\
  p_0 & = \frac{5}{12\pi},\\
  \bB_0 & = \left(-\frac{\sin(2\pi y)}{\sqrt{4\pi}},\frac{\sin(4\pi x)}{\sqrt{4\pi}}\right).
\end{align*}
Figures~\ref{fig:OT_rho_t05} and \ref{fig:OT_rho_t1} compare the solutions at time $t=0.5$ and $t=1.0$, respectively. It can be seen that the solutions at time $t=0.5$ are visually indistinguishable except for a small difference in the shape of the small upsurge at the middle of the domain. The differences are more visible at time $t=1.0$. We show them in the zoomed-in plots.

\begin{figure}[!ht]
  \centering
     \begin{subfigure}{0.47\textwidth}
         \centering
         \textbf{GLM-GP-MHD}
         \vspace{0.2cm}
     \end{subfigure}
     \hfill
     \begin{subfigure}{0.47\textwidth}
       \centering
       \textbf{GLM-$\text{GP}^s$-MHD}
       \vspace{0.2cm}
     \end{subfigure}
     \hfill
     \begin{subfigure}{0.47\textwidth}
         \centering
         \includegraphics[width=\textwidth]{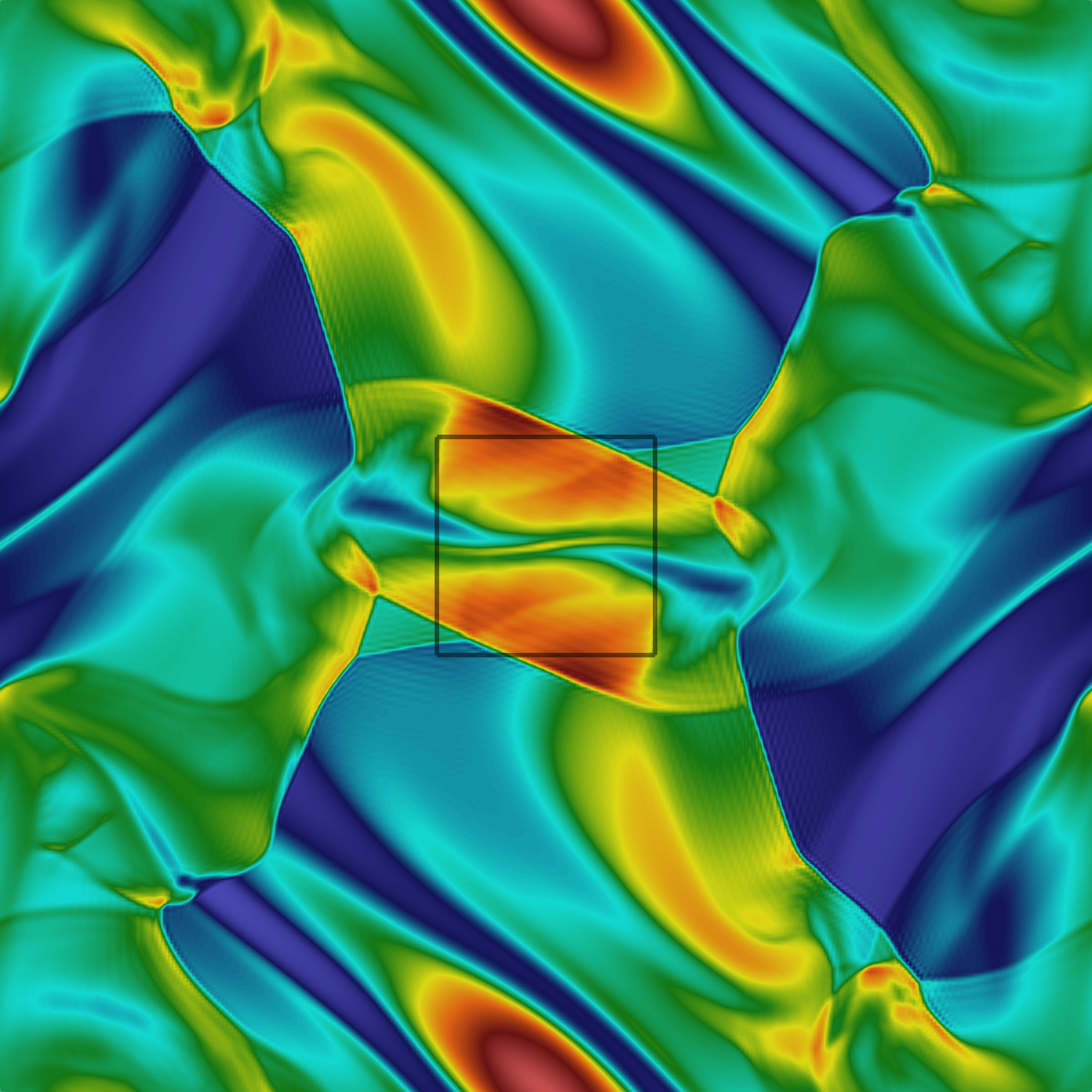}
     \end{subfigure}
     \hfill
     \begin{subfigure}{0.47\textwidth}
         \centering
         \includegraphics[width=\textwidth]{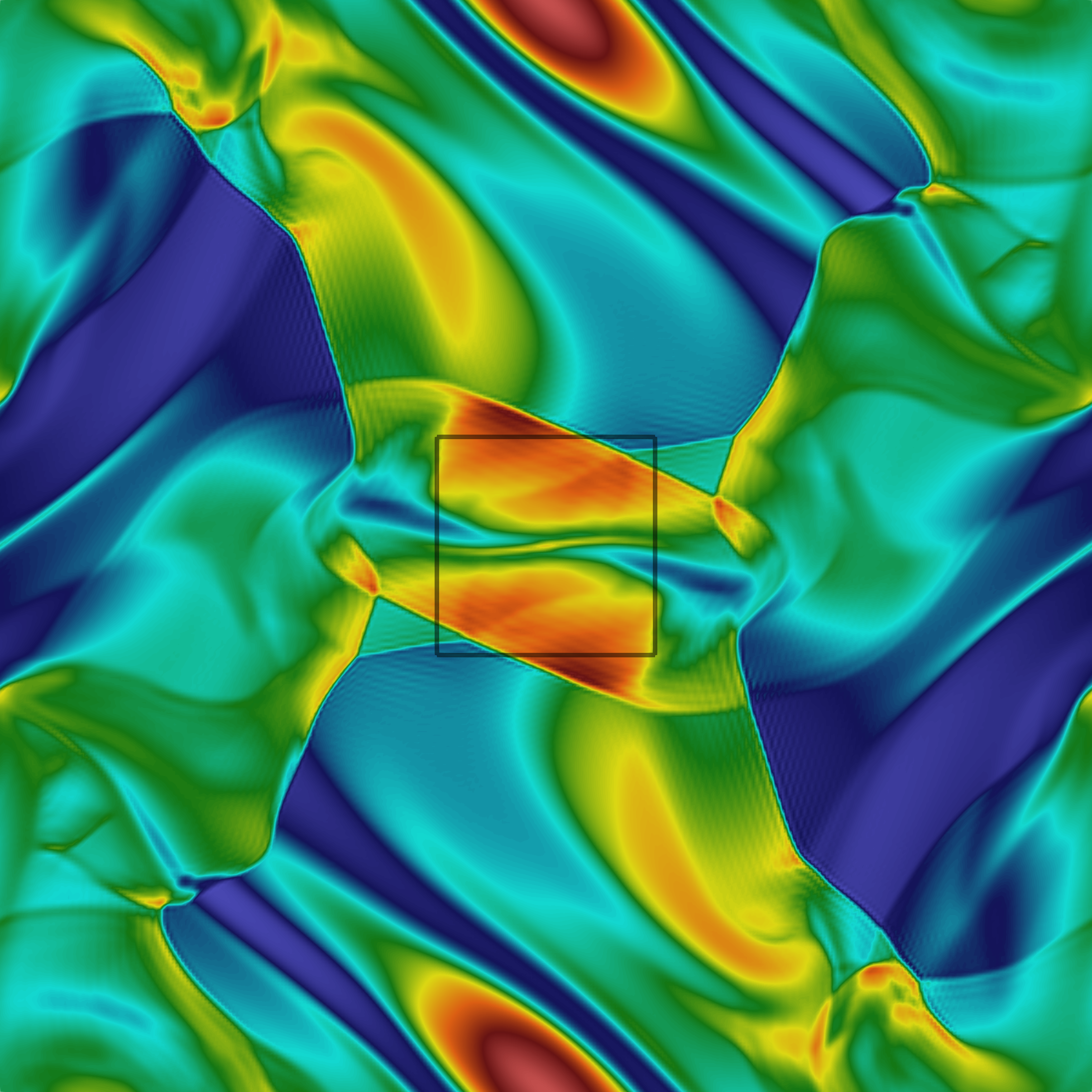}
     \end{subfigure}
     \hfill
     \begin{subfigure}{0.47\textwidth}
         \centering
         \includegraphics[width=\textwidth]{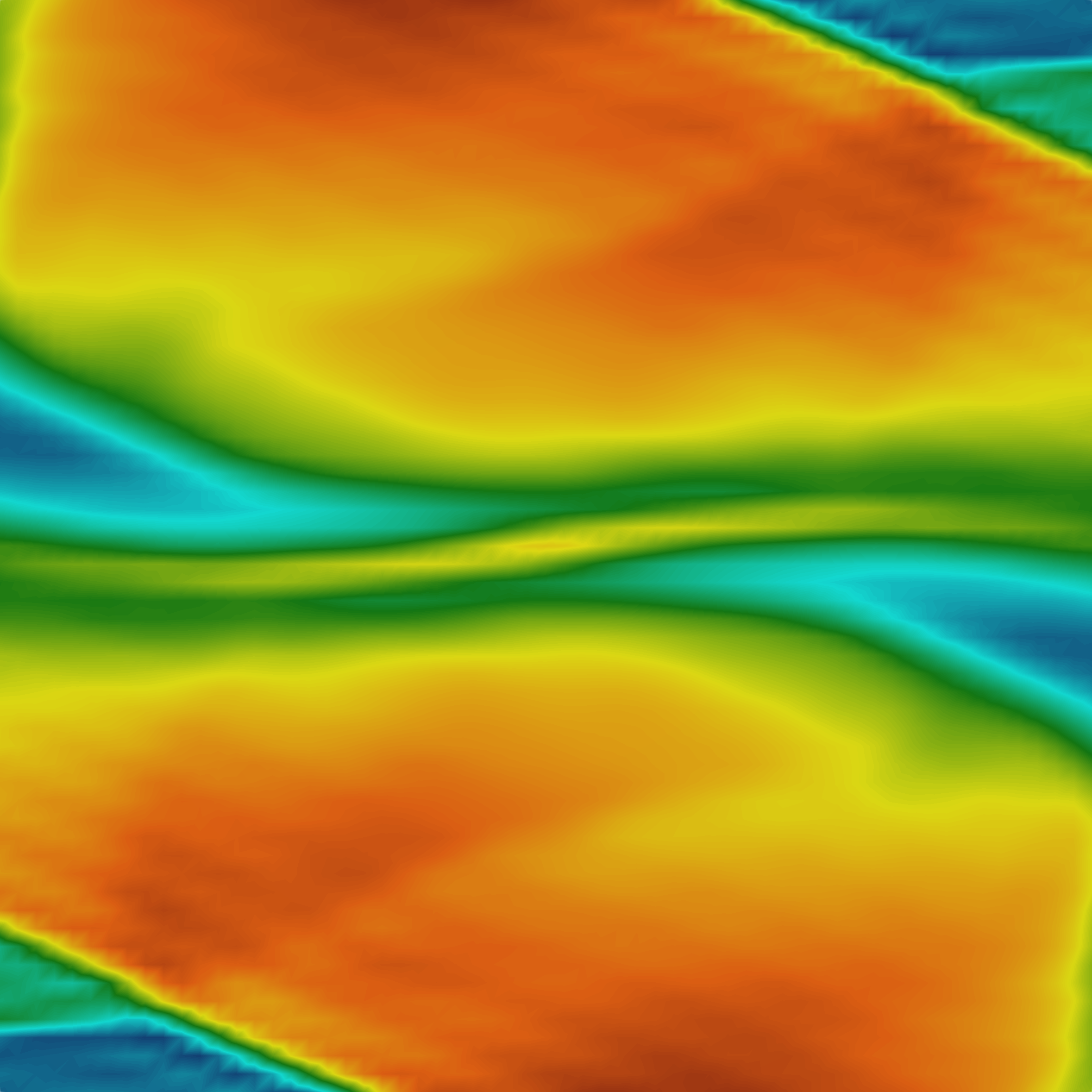}
     \end{subfigure}
     \hfill
     \begin{subfigure}{0.47\textwidth}
         \centering
         \includegraphics[width=\textwidth]{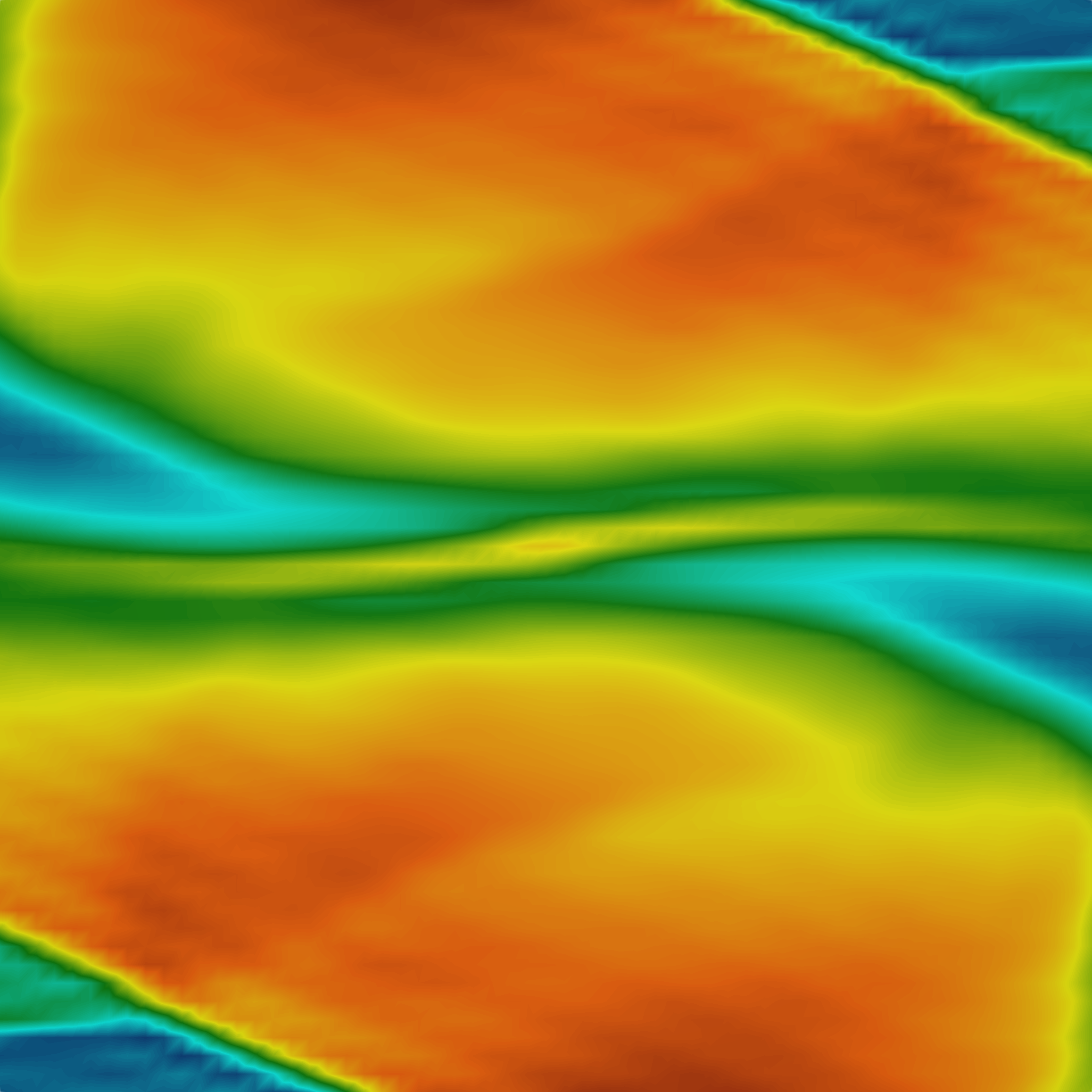}
     \end{subfigure}
     \caption{Density of the Orszag-Tang solution $t=0.5$. Zoomed-in region $[0.4,0.6]\times[0.4,0.6]$}
     \label{fig:OT_rho_t05}
\end{figure}

\begin{figure}[!ht]
  \centering
     \begin{subfigure}{0.47\textwidth}
         \centering
         \textbf{GLM-GP-MHD}
         \vspace{0.2cm}
     \end{subfigure}
     \hfill
     \begin{subfigure}{0.47\textwidth}
       \centering
       \textbf{GLM-$\text{GP}^s$-MHD}
       \vspace{0.2cm}
     \end{subfigure}
     \hfill
     \begin{subfigure}{0.47\textwidth}
         \centering
         \includegraphics[width=\textwidth]{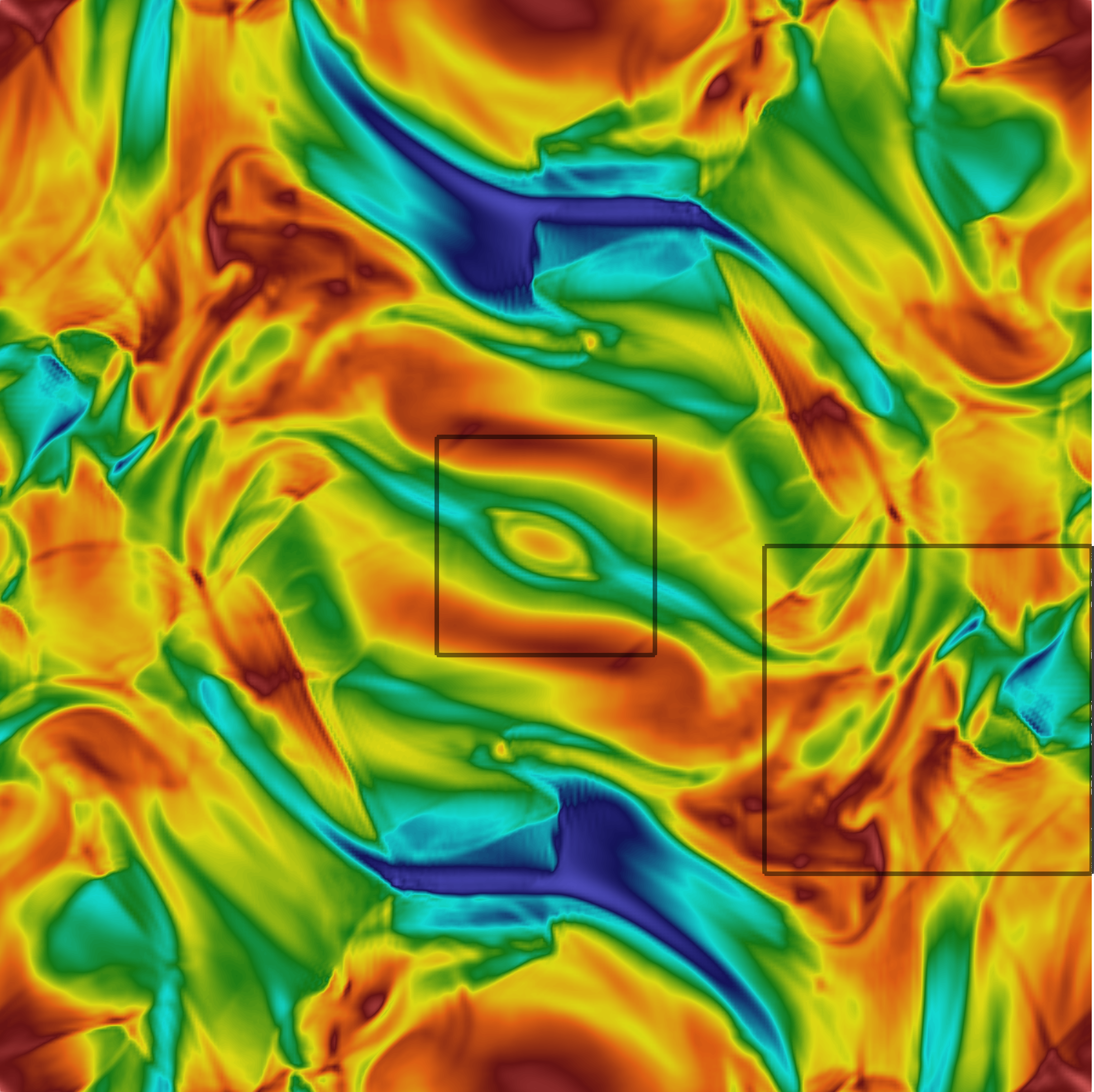}
     \end{subfigure}
     \hfill
     \begin{subfigure}{0.47\textwidth}
         \centering
         \includegraphics[width=\textwidth]{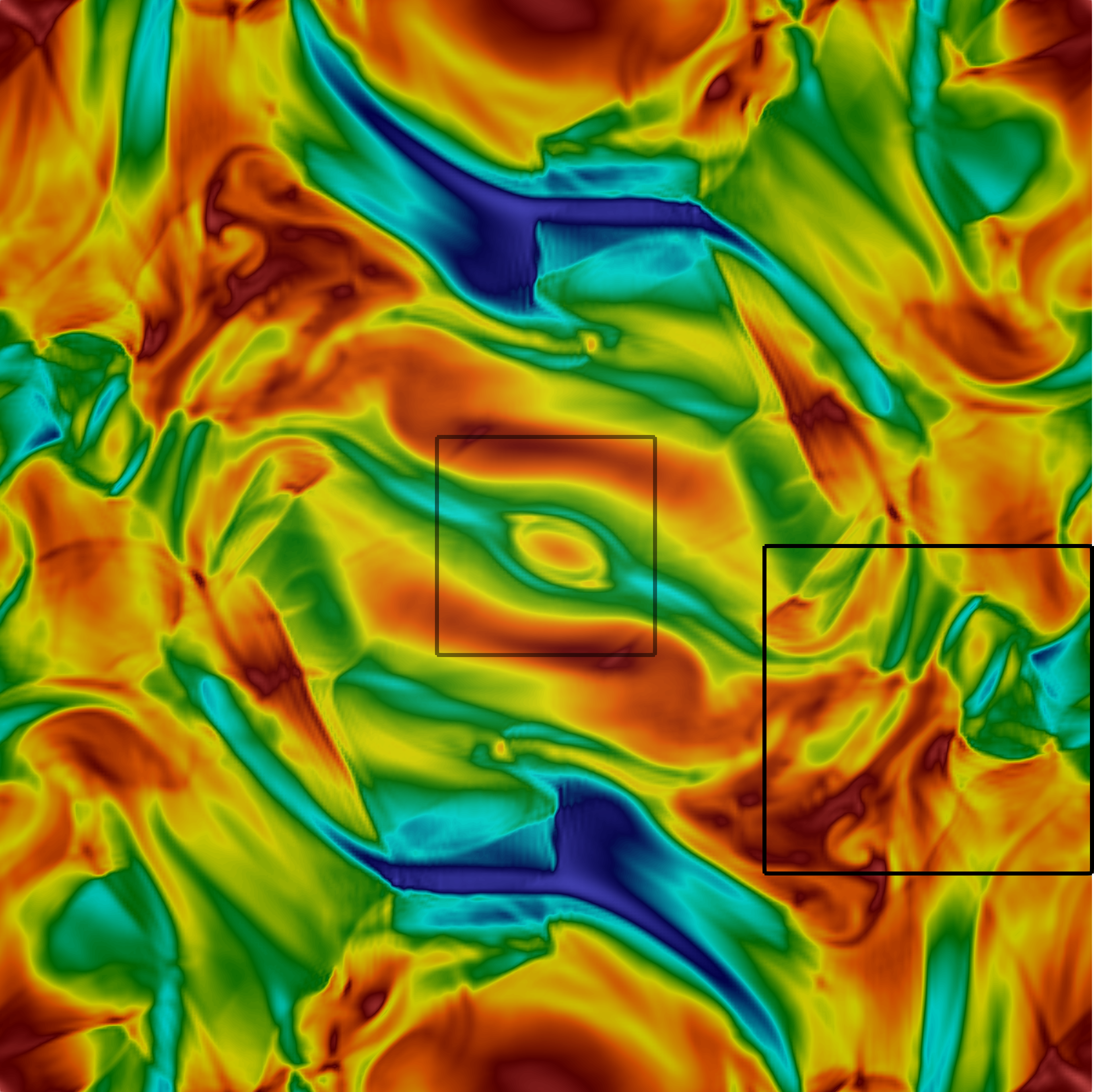}
     \end{subfigure}
     \hfill
     \begin{subfigure}{0.47\textwidth}
         \centering
         \includegraphics[width=\textwidth]{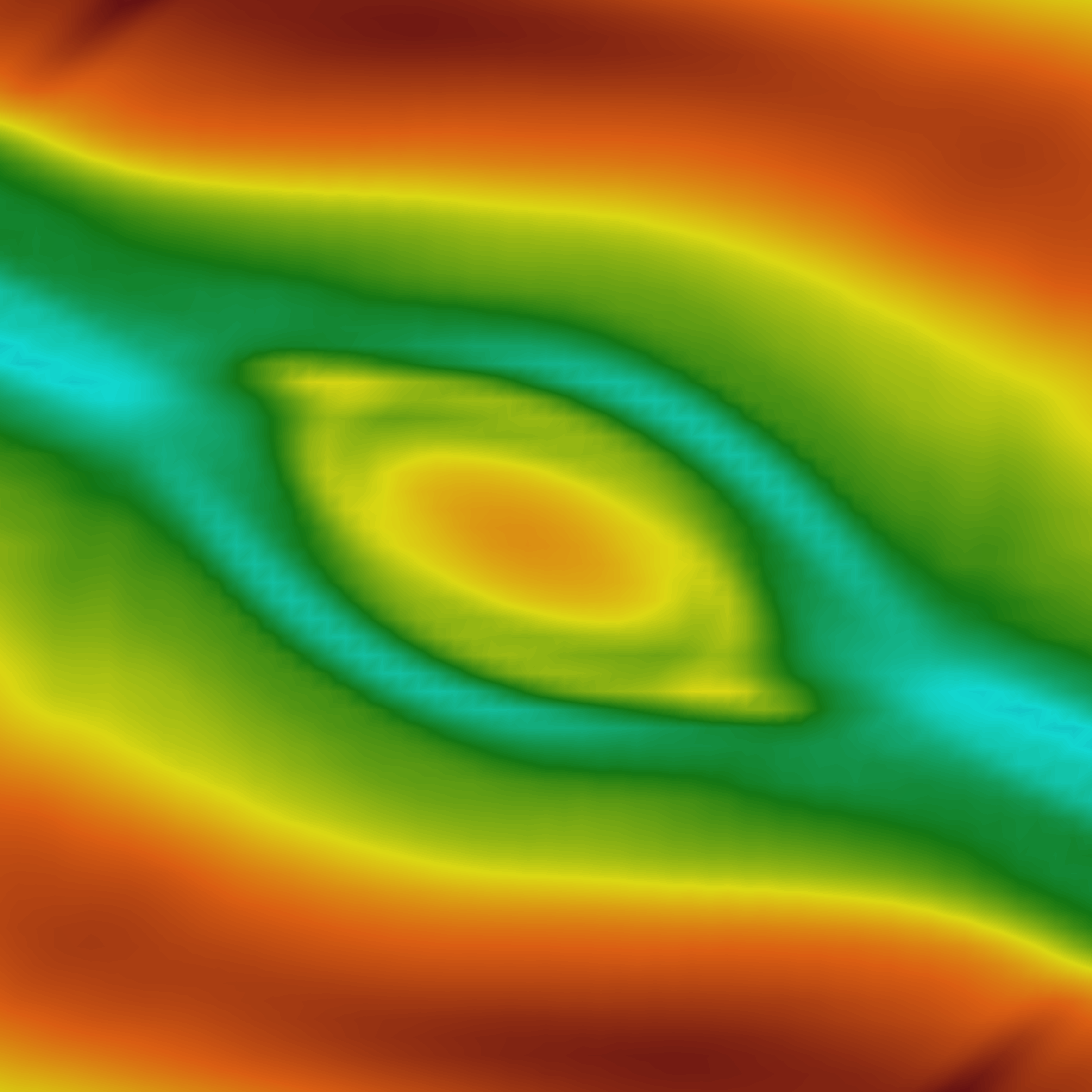}
     \end{subfigure}
     \hfill
     \begin{subfigure}{0.47\textwidth}
         \centering
         \includegraphics[width=\textwidth]{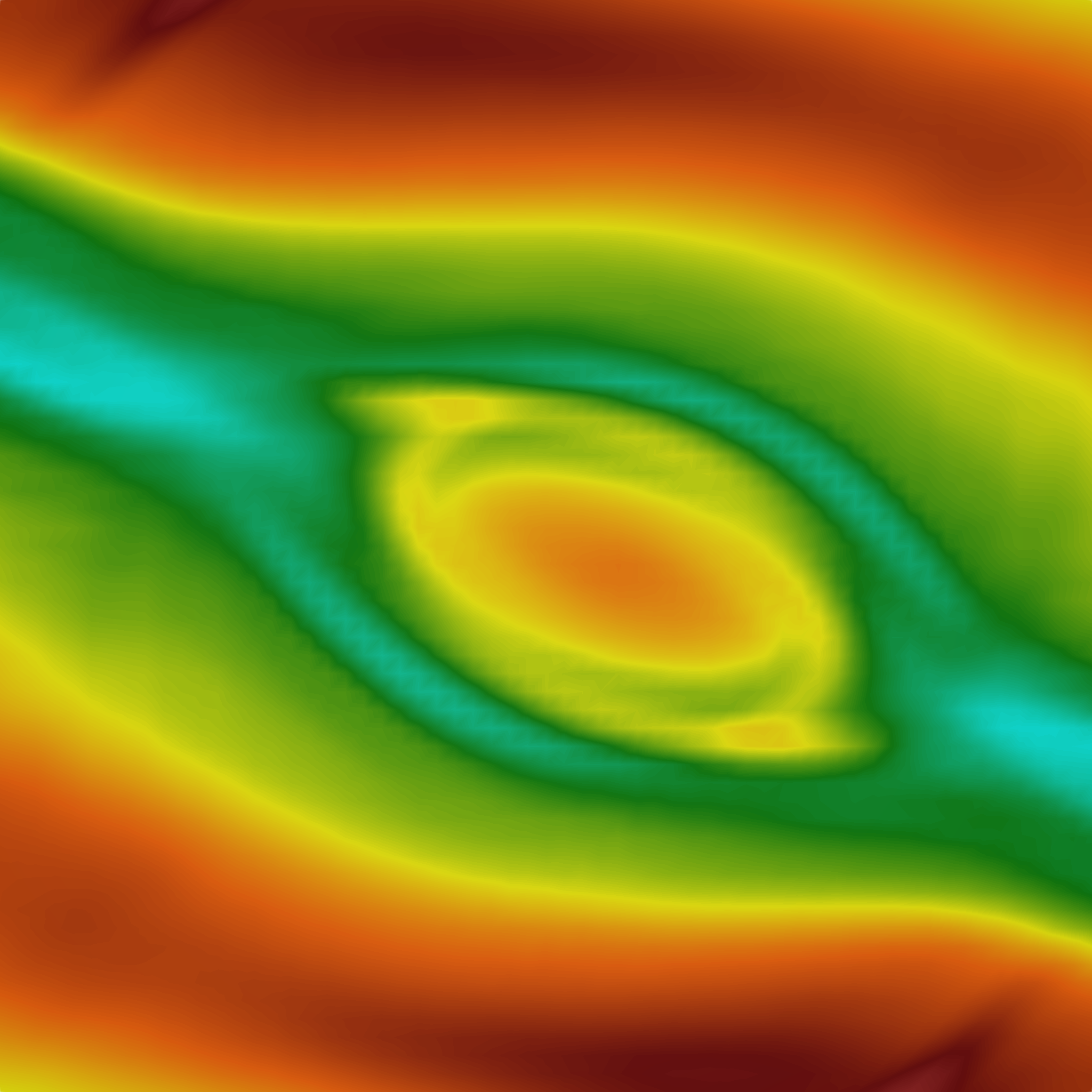}
     \end{subfigure}
     \hfill
     \begin{subfigure}{0.47\textwidth}
         \centering
         \includegraphics[width=\textwidth]{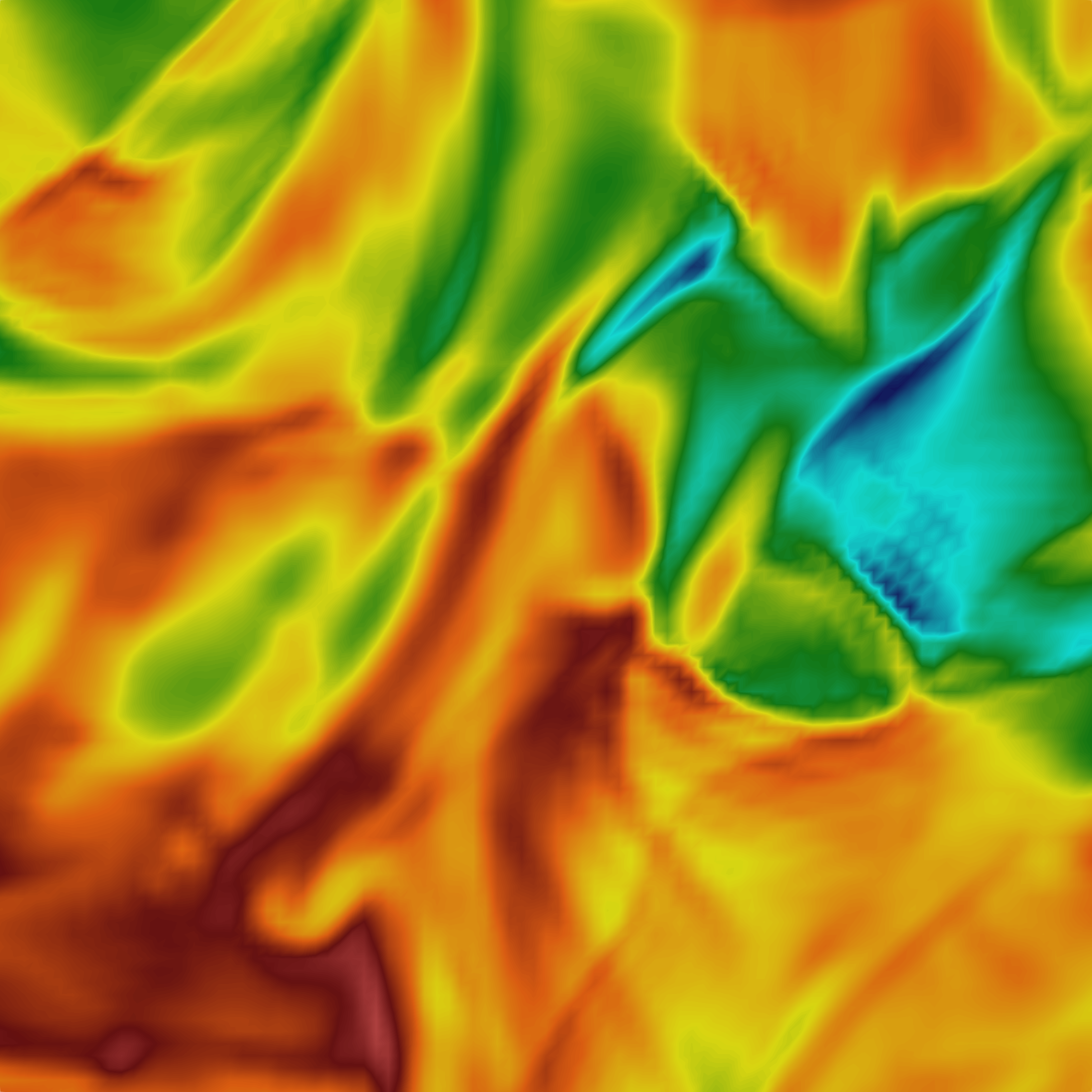}
     \end{subfigure}
     \hfill
     \begin{subfigure}{0.47\textwidth}
         \centering
         \includegraphics[width=\textwidth]{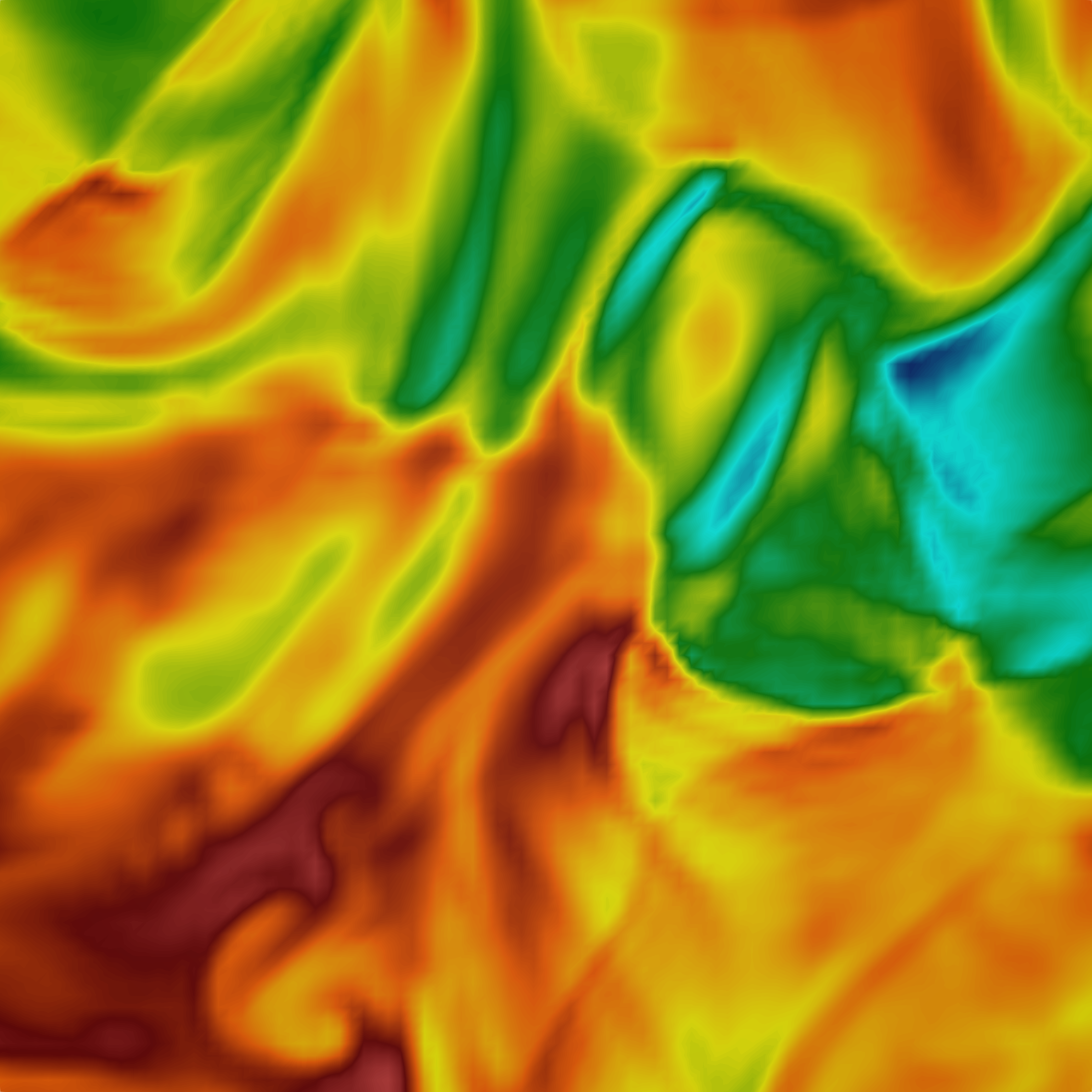}
     \end{subfigure}
     \caption{Density of the Orszag-Tang solution $t=1.0$. Zoomed-in regions $[0.4,0.6]\times[0.4,0.6]$, $[0.7,1.0]\times[0.2,0.5]$.}
     \label{fig:OT_rho_t1}
\end{figure}

\bibliographystyle{siamplain}
\bibliography{ref}
\end{document}